\newcommand{\supth}{\textsuperscript{th}\ }
\theoremstyle{plain}
\newtheoremstyle{slanted}
  {}
  {}
  {\slshape}
  {}
  {\bfseries}
  {}
  { }
  {}
\theoremstyle{slanted}
\newtheorem{thm}{Theorem}
\newtheorem{lem}{Lemma}
\newtheorem{prop}{Proposition}
\newtheorem{cor}{Corollary}
\theoremstyle{definition}
\newtheorem{defn}{Definition}
\newtheorem{obs}{Observation}
\theoremstyle{remark}
\newcolumntype{P}[1]{>{\centering\arraybackslash}p{#1}}
\newcommand{\wprod}{\ensuremath{\mathbin{\nabla}}}
\begin{document}
\title{Perfect weak modular product graphs}
\author{Danial Dervovic\thanks{Department of Computer Science, University College London, Gower Street, London WC1E 6BT, United Kingdom; mailto:  \href{mailto:d.dervovic@cs.ucl.ac.uk}{\texttt{d.dervovic@cs.ucl.ac.uk}}.}}
\date{\today}
\maketitle

\begin{abstract}
In this paper we enumerate the necessary and sufficient conditions for the weak modular product of two simple graphs to be perfect.
The weak modular product differs from the direct product by also encoding non-adjacencies of the factor graphs in its edges.
This work is motivated by the following: a 1978 theorem of Kozen states that two graphs on $n$ vertices are isomorphic if and only if there is a clique of size $n$ in the weak modular product between the two graphs.
Furthermore, a straightforward corollary of Kozen's theorem and Lov\'{a}sz's sandwich theorem is if the weak modular product between two graphs is perfect, then checking if the graphs are isomorphic is polynomial in $n$.
Interesting cases include complete multipartite graphs and disjoint unions of cliques. All perfect weak modular products have factors that fall into classes of graphs for which testing isomorphism is already known to be polynomial in the number of vertices. 
\end{abstract}

\section{Introduction}

Graph products have been extensively studied and are of vast theoretical and practical interest, see, e.g., Hammack, Imrich and Klav\v{z}ar~\cite{Hammack2011}.
A common problem is to determine how graph invariants such as the independence number and clique number behave under the action of a particular graph product.
For instance, a famous result of Lov\'{a}sz~\cite{Lovasz1979} states that for graphs $G$ and $H$, $\vartheta(G \boxtimes H) = \vartheta(G) \vartheta(H)$, where $\vartheta(\,\cdot\,)$ denotes the Lov\'{a}sz number and $\boxtimes$ is the strong graph product.
There are three graph products that have received most attention in the literature: the aforementioned strong graph product, the direct (or tensor) product and the Cartesian product.
These three graph products are the most studied as they satisfy the following: they are associative and projections onto the factors are weak homomorphisms.
Loosely speaking, the second property means that the adjacency structure of the product graph allows one to approximately infer the adjacency structure of the factor graphs.

One can consider graph products that do not satisfy these two properties.
One such product is the \emph{weak modular product}, whose adjacency structure also includes information about non-adjacency in the factor graphs.
Interestingly, as originally proved by Kozen~\cite{Kozen1978}, a clique of a certain size exists in the product graph if and only if the factors are isomorphic.
Since the decision version of finding the clique number of a general graph is $\NP$-complete, this result has largely been ignored in the literature with reference to graph isomorphism.

The graph isomorphism problem (GI)
has been studied extensively for decades, but its complexity status remains unknown.
Clearly, $\mathrm{GI}\in\NP$ as one can check easily if a given candidate isomorphism preserves all adjacencies and nonadjacencies between the two graphs at hand.
However, it is unlikely that GI is $\NP$-complete as this would imply collapse of the polynomial hierarchy~\cite{Goldreich1991}.
The question of whether $\mathrm{GI}\in\P$ remains open.
There has been a considerable research effort to find a polynomial-time algorithm for GI, culminating recently in the recent quasi-polynomial
 algorithm by Babai~\cite{Babai2015}.
However, there are many classes of graph for which GI admits a polynomial-time algorithm, for instance, graphs with a forbidden minor~\cite{Ponomarenko1991,Grohe2010}, including planar graphs and graphs of bounded genus.
In practice, the approach of McKay and Piperno~\cite{McKay2014} works efficiently on almost all graphs and so efficiently solving GI ``in the wild'' is all but solved.

Many of the recent advances in GI, including Babai's recent breakthrough~\cite{Babai2015} and the \texttt{nauty}/\texttt{traces} programs of McKay and Piperno~\cite{McKay2014} use a group theoretic approach.
In this paper we consider a combinatorial approach, which was the primary method for GI in the earlier days of its study.
The combinatorial construction we consider is the weak modular product, mentioned earlier.
This construction has been used in the pattern recognition community under the label \emph{association graph} to solve graph matching problems~\cite{Pelillo1999a, CONTE2004}.
Indeed, Pelillo~\cite{Pelillo1999} uses a heuristic inspired by theoretical biology to find cliques in the weak modular product as an approach to inexact graph matching, a problem that can be interpreted as an approximation to GI. He provides computational evidence that this technique is tractable for this problem in certain regimes.

We take an analytical approach, inspired by the following observation: a direct corollary of the Lov\'{a}sz sandwich theorem gives us that the Lov\'{a}sz number of a perfect graph is the same as its clique number~\cite{Lovasz1979}.
Since the Lov\'{a}sz number can be computed in polynomial time, if the weak modular product of two graphs is perfect, testing if they are isomorphic is polynomial.
Note that GI for perfect graphs is GI-complete, since deciding if two bipartite graphs are isomorphic is GI-complete~\cite{Uehara2005} and all bipartite graphs are perfect.
We enumerate all pairs of graphs for which the weak modular product is perfect, using theoretical tools that were not available to Kozen in 1978, including the Strong Perfect Graph Theorem~\cite{Chudnovsky2006} amongst others.
This adds to a tradition of enumerating perfect product graphs; Ravindra and Parthasarathy~\cite{Ravindra1977} and Ravindra~\cite{Ravindra1978} found all perfect Cartesian, direct and strong product graphs.  
This results in the following theorem.
\begin{thm}\label{thm:main_intro}
    The graph $G = G_0 \wprod G_1$ is perfect if and only if one of the following holds:
    \begin{enumerate}
     \item $G_z \in \{K_1, K_2, E_2\}$, $G_{\overline{z}}$ arbitrary;
     \item $G_z \cong P_4$, $G_{\overline{z}} \in \{ K_{1,r}, K_{r} \uplus K_1, P_4 \}$;
     \item $G_z \cong C_5$, $G_{\overline{z}} \in \{ P_3, K_2 \uplus E_1, P_4, C_5 \}$;
     \item $G_z \cong K_r \uplus K_s$, $G_{\overline{z}}$ is a disjoint union of stars and cliques;
     \item $G_z \cong K_{m,n}$, $G_{\overline{z}}$ is connected and $(P_4, \text{cricket}, \text{dart}, \text{hourglass})$-free;
     \item $G_z \cong K_n$, $G_{\overline{z}}$ $(\text{odd hole, paw})$-free;
     \item $G_z \cong E_n$, $G_{\overline{z}}$ $(\text{odd antihole, co-paw})$-free;
     \item $G_z$, $G_{\overline{z}}$ are complete multipartite;
     \item $G_z$, $G_{\overline{z}}$ are disjoint unions of cliques;
     \item $G_z \cong K_r \uplus K_s$, $G_{\overline{z}} \cong K_{m,n}$;
    \end{enumerate}
    for any $m,n,r,s,z$, where $m,n,r,s \in \mathbb{N}$, and $z \in \{0,1\}$, with its (Boolean) negation denoted by $\overline{z}$.
\end{thm}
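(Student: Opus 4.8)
The plan is to lean on the Strong Perfect Graph Theorem throughout, so that ``$G_0 \wprod G_1$ is perfect'' becomes ``$G_0 \wprod G_1$ contains no odd hole and no odd antihole,'' and to reduce everything to a finite catalogue of small induced-subgraph computations inside the product. Before any case analysis I would record the two structural symmetries of $\wprod$ that are already visible in the statement. First, the product is commutative in its factors, so the roles of $z$ and $\overline{z}$ may be swapped freely. Second, and more usefully, it is invariant under simultaneous complementation, $G_0 \wprod G_1 = \overline{G_0} \wprod \overline{G_1}$: for distinct vertices the complemented adjacency relations are the negations of the originals, so the defining condition ``adjacency agrees across the two factors'' is literally unchanged, and the two products coincide as labelled graphs. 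Since perfection is symmetric and closed under complementation, these symmetries let me argue the ten cases only up to swapping and complementing factors; the complementary pairs $6/7$ and $8/9$, the self-complementary factors $P_4$ and $C_5$ in cases $2$ and $3$, and the pairing of $K_{m,n}$ with $K_r \uplus K_s$ in cases $5$, $9$ and $10$ then need not be treated twice.

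The engine of the necessity direction is the following observation, which I would establish first: if $A$ is an induced subgraph of $G_0$ and $B$ an induced subgraph of $G_1$, then $A \wprod B$ is exactly the subgraph of $G_0 \wprod G_1$ induced on $V(A) \times V(B)$, because adjacency in the product depends only on the (unchanged) adjacencies within $A$ and $B$. Hence any odd hole or odd antihole occurring in some $A \wprod B$ also occurs in the full product, and perfection of $G_0 \wprod G_1$ forbids the factors from jointly realising such a pair $(A,B)$. I would therefore build a library of \emph{forbidden factor-pairs}: small graphs $A,B$ for which $A \wprod B$ is certified imperfect by an explicit odd hole or odd antihole. I expect the graphs named in the theorem --- $P_4$, $C_5$, the paw and co-paw, and the five-vertex cricket, dart and hourglass --- to appear here as precisely the minimal obstructions, each paired with the smallest companion subgraph in the other factor that closes an odd cycle (or its complement) in the product.

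With the library fixed, necessity becomes an exhaustive structural case analysis. I would first peel off case $1$: if some factor has at most two vertices then it is one of $K_1, K_2, E_2$, and a direct computation shows the product is empty (for $K_1$) or bipartite (for $K_2$, and for $E_2$ by complementation), hence perfect with $G_{\overline{z}}$ arbitrary. Otherwise both factors have at least three vertices, and I would split on whether each factor is $P_4$-free, i.e.\ a cograph. If both are cographs I would refine within the cograph world --- deciding whether each factor is a disjoint union of cliques, complete multipartite, a clique, an independent set, or a disjoint union of stars and cliques --- and use the forbidden-pair lemmas to pin down the admissible companion in each subcase, yielding cases $4$--$10$. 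If instead some factor contains an induced $P_4$, the $P_4$- and $C_5$-based obstructions should force that factor to be essentially $P_4$ or $C_5$ itself and collapse the other factor onto the short lists of cases $2$ and $3$. For sufficiency I would, family by family, exhibit the product as a member of a recognisable perfect class (bipartite, comparability, or otherwise odd-hole- and odd-antihole-free), reducing the unbounded parameters $m,n,r,s$ to a bounded verification via a replication or substitution argument wherever a factor grows.

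The main obstacle is the completeness of the necessity direction: proving that the ten families are the only survivors. The individual forbidden-pair computations are routine once the configurations are guessed, but the genuine difficulty lies in organising the case split so that \emph{every} pair of factors avoiding all obstructions provably lands in exactly one listed family, with no gap and no spurious overlap. The linchpin is identifying the full minimal obstruction set correctly --- in particular establishing that the cricket, dart and hourglass (and no further five-vertex graphs) are the true minimal obstructions in the $K_{m,n}$ branch of case $5$ --- since any omission there would either leave the enumeration incomplete or wrongly admit an extra family.
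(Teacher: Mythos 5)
Your necessity skeleton---the Strong Perfect Graph Theorem, the identity $G_0 \wprod G_1 = \overline{G_0} \wprod \overline{G_1}$, the fact that $A \wprod B$ is precisely the subgraph of $G_0 \wprod G_1$ induced on $V(A)\times V(B)$ when $A$, $B$ are induced in the factors, and a library of small imperfect products driving an exhaustive structural case split---is exactly the paper's strategy, and your two symmetry claims are correct. The genuine gap is in your sufficiency plan. You propose to certify each listed family by exhibiting the product as a member of a recognisable perfect class, ``reducing the unbounded parameters $m,n,r,s$ to a bounded verification via a replication or substitution argument wherever a factor grows.'' That mechanism fails: the weak modular product does not commute with vertex replication. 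If $H'$ is obtained from $H$ by adding a twin $y'$ of $y$ (adjacent to $y$ or not), the new column $\{(x,y') : x \in V(G_0)\}$ is \emph{not} a set of twins of the old column $\{(x,y)\}$: vertices sharing a second coordinate are never adjacent in the product, yet $(x,y')$ and $(x'',y)$ \emph{are} adjacent for suitable $x \neq x''$ (for a false twin, exactly when $x \not\sim x''$ in $G_0$). So growing a factor by a twin is not vertex multiplication in the product, Lov\'asz's replication lemma does not apply, and no bounded verification is justified by it.

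Worse, several families are not finitely parameterised at all, so no finite check could suffice even in principle: in case (6) the companion is an arbitrary $(\text{odd hole, paw})$-free graph, in case (4) an arbitrary disjoint union of stars and cliques, in case (5) an arbitrary connected $(P_4,\text{cricket, dart, hourglass})$-free graph. The paper closes these cases with tools your plan does not foresee: for cases (1), (6), (7) it uses $G \wprod K_n = G \otimes K_n$ together with Ravindra and Parthasarathy's characterisation of perfect tensor products; for cases (8)/(9) it invokes the Cameron--Edmonds--Lov\'asz theorem that a union of two perfect graphs on the same vertex set is perfect under a triangle-closure condition; for case (4) it needs a bespoke decomposition of $(K_r \uplus K_s) \wprod (G \uplus H)$ into four blocks, an explicit $3$-colouring argument for $(K_r \uplus K_s) \wprod (K_{1,m}\uplus K_1)$, and a separate odd-hole/odd-antihole exclusion for $P_4 \wprod K_{1,r}$; and for case (3) it identifies $C_5 \wprod C_5$ as the line graph of a bipartite graph. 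None of this is recoverable from ``recognisable perfect class plus replication,'' so the sufficiency half of the theorem is genuinely missing from your proposal, independently of the (acknowledged but unexecuted) completeness of your case analysis on the necessity side.
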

The remainder of this work constitutes the proof and necessary ingredients.

\section{Preliminaries}\label{sec:prelim}

We consider only finite, simple graphs, i.e. graphs with finite number of vertices and no self-loops or multiple edges.
A \emph{graph} $G=(V(G),E(G))$ consists of a set $V(G)$ of $n$ vertices and a set of edges $E(G)\subseteq \{ \{ x,x'\} : x, x'\in V(G),\ x\neq x'\}$.

We write $x\sim y$ to denote that vertices $x$ and $y$ are adjacent.



We denote by $\overline{G}$ the \emph{complement} of $G$, $V(\overline{G})=V(G)$ and $x\sim x'$ in $\overline{G}$ if and only if $x \not\sim x'$ in $G$ and $x \neq x'$.
For a named graph, we prepend the prefix \emph{``co-''} to denote its complement, e.g., the complement of a bipartite graph is a co-bipartite graph.  
The \emph{union} of graphs $G$ and $H$ is the graph $G\cup H$ with vertex set $V(G\cup H)=V(G) \cup V(H)$ and edge set $E(G\cup H)=E(G)\cup E(H)$.
The \emph{disjoint union} of graphs $G$ and $H$ is the graph $G\uplus H$ with vertex set $V(G\uplus H)=V(G)\uplus V(H)$ and edge set $E(G\uplus H)= E(G)\uplus E(H)$.
For a graph $G$, we denote the disjoint union of $k$ copies of $G$ by $k G$.
\par
A graph $G'$ is a \emph{subgraph} of another graph $G$, $G'\subseteq G$ , if and only if $V(G')\subseteq V(G)$, and
\begin{equation}
    E(G')\subseteq E(G) \text{ and for all } \{x, x'\}\in E(G'), \ \ x, x'\in V(G').
\end{equation}
Suppose we have a subset of the vertices $U\subseteq V(G)$.
An \emph{induced subgraph} of $G$, $G[U]$, is the graph with vertex set $V(G[U])=U$ and edge set
\begin{equation}
    \{ \{ x, x'\}\mid x, x' \in U,\ \{ x, x'\}\in E(G) \}.
\end{equation}
We say $G[U]$ is \emph{induced} by $U\subseteq V(G)$.

We now define two closely related graph products.
The \emph{direct}, or \emph{tensor product} of graphs $G$ and $H$, denoted by $G\otimes H$, has vertex set $V(G)\times V(H)$ and an edge $\{ (x,y),(x',y')\}$ if and only if $\{ x,x'\}\in E(G)$ and $\{ y,y'\}\in E(H)$.
\par
The \emph{weak modular product} (see, e.g., Hammack, Imrich and Klav\v{z}ar~\cite{Hammack2011}) of graphs $G$ and $H$, denoted by $G\wprod H$, has vertex set $V(G\wprod H) =V(G)\times V(H)$ and an edge $\{ (x,y),(x',y')\}$ if and only if
\begin{enumerate}
    \item either $\{ x,x'\}\in E(G)$ and $\{ y,y'\}\in E(H)$;
    \item or $\{ x,x'\} \in E(\overline{G})$ and $\{ y,y'\} \in E(\overline{H})$.
\end{enumerate}
\par
The next statement is a direct consequence of the definitions of the weak modular product and the tensor product.
\begin{lem}\label{lem:GprodHunion}
    For graphs $G$ and $H$,
    \begin{equation}
        G\wprod H =G \otimes H \cup \overline{G}\otimes \overline{H}.
    \end{equation}
\end{lem}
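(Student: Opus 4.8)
The plan is to verify that both sides of the claimed equality denote the same graph by checking that they carry the same vertex set and the same edge set. First I would dispose of the vertex set. Since the complement fixes the vertex set, $V(\overline{G}) = V(G)$ and $V(\overline{H}) = V(H)$, both $G \otimes H$ and $\overline{G} \otimes \overline{H}$ have vertex set $V(G) \times V(H)$; hence their union inherits the vertex set $(V(G) \times V(H)) \cup (V(G) \times V(H)) = V(G) \times V(H)$, which is exactly $V(G \wprod H)$.

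The substance of the argument is then a direct comparison of edge-membership conditions. I would fix an arbitrary pair $\{(x,y),(x',y')\}$ of distinct vertices and unwind the definition of the union of graphs: this pair lies in $E(G \otimes H \cup \overline{G} \otimes \overline{H})$ precisely when it lies in $E(G \otimes H)$ or in $E(\overline{G} \otimes \overline{H})$. Applying the definition of the tensor product to each of the two summands, the first disjunct reads $\{x,x'\} \in E(G)$ and $\{y,y'\} \in E(H)$, while the second reads $\{x,x'\} \in E(\overline{G})$ and $\{y,y'\} \in E(\overline{H})$. This disjunction is verbatim the two-case adjacency rule that defines the weak modular product $G \wprod H$, so the two edge sets coincide, and the two graphs are therefore equal.

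The argument amounts to a tautological unwinding of the relevant definitions, so I do not anticipate a genuine obstacle. The only point that merits a moment of care is the vertex set of the union on the right-hand side: one must recall that the complement operation leaves the vertex set unchanged, so that the two tensor products being unioned are built on identical vertex sets and the union does not inadvertently enlarge the vertex set beyond $V(G) \times V(H)$. Once this is noted, the equality of the two graphs follows immediately from the definitions.
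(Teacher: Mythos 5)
Your proof is correct and matches the paper's approach: the paper states Lemma~\ref{lem:GprodHunion} as ``a direct consequence of the definitions'' and offers no further argument, and your unwinding of the vertex sets and the edge-membership conditions is exactly that definitional verification made explicit. No gaps here.
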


Given graphs $G$ and $H$, we say that $G$ and $H$ are \emph{isomorphic}, $G\cong H$, whenever there is a bijection $f:V(G)\to V(H)$ such that $x \sim y$ if and only if $f(x)\sim f(y)$ for every $x,y \in V(G)$.
A \emph{clique} is a subset of the vertices of a graph such that every two distinct vertices in the clique are adjacent.
The \emph{clique number} of a graph $G$, $\omega(G)$, is the cardinality of its largest clique.

An \emph{independent set} in a graph is a subset of the vertices such that no two vertices in the subset are adjacent.
The \emph{independence number} of a graph $G$, $\alpha(G)$, is the cardinality of its largest independent set.
Clearly, $\alpha(G) = \omega(\overline{G})$.

The \emph{chromatic number} of a graph $\chi(G)$ is the minimum number of colours for which every pair of adjacent vertices has a different colour when we give every vertex a colour.

A graph is \emph{perfect} if the chromatic number of every induced subgraph equals its clique number.
Lov\'{a}sz's famous `sandwich theorem' states that for any graph $G$, $\omega(G)\leq \vartheta(G) \leq \chi(G)$, where $\vartheta(G)$ is the Lov\'{a}sz number of the graph $G$, which can be computed in polynomial time~\cite{Lovasz1979}.
Thus, for perfect graphs one can compute $\omega(G)$ and $\chi(G)$ in polynomial time.

\section{The weak modular product and isomorphism}\label{sec:wmp_and_iso}

For completeness we state and prove Kozen's theorem, in modern language.

\begin{prop}\label{prop:Kozen}
  \emph{(Kozen~\cite{Kozen1978}).}
  Let $G$ and $H$ be graphs on $n$ vertices.
  Then $\omega(G\wprod H)\leq n$.
  Moreover, $\omega(G\wprod H) = n$ if and only if $G\cong H$.
\end{prop}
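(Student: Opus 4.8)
The plan is to first establish the structural constraint that a clique in $G \wprod H$ imposes on its vertices, and then read off both the bound and the isomorphism characterisation from that constraint. The key observation is that in any clique of $G \wprod H$ the first coordinates of the vertices are pairwise distinct, and likewise the second coordinates. To see this, suppose two distinct clique vertices $(x,y)$ and $(x',y')$ shared a first coordinate, $x = x'$. Then neither $\{x,x'\} \in E(G)$ (there are no self-loops) nor $\{x,x'\} \in E(\overline{G})$ (the complement forbids self-loops as well, and in any case $x = x'$), so neither defining condition of the weak modular product edge can hold; hence $(x,y)$ and $(x',y')$ would be non-adjacent, contradicting that they lie in a common clique. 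The same argument rules out a shared second coordinate. Consequently a clique $C$ projects injectively onto $V(G)$ via the first coordinate, so $|C| \leq |V(G)| = n$, giving $\omega(G \wprod H) \leq n$ immediately.

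For the \emph{if} direction, I would take an isomorphism $f \colon V(G) \to V(H)$ and exhibit the explicit vertex set $C = \{(x, f(x)) : x \in V(G)\}$, which has $n$ elements. Given two distinct such vertices, $x \neq x'$ forces exactly one of $x \sim x'$ or $x \not\sim x'$ in $G$; since $f$ preserves both adjacency and non-adjacency, and injectivity gives $f(x) \neq f(x')$, the pair $(f(x), f(x'))$ realises the matching situation in $H$, so one of the two edge conditions holds and the two vertices are adjacent. Thus $C$ is a clique of size $n$ and $\omega(G \wprod H) \geq n$; combined with the bound, $\omega(G \wprod H) = n$.

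For the \emph{only if} direction, I would start from a clique $C$ of size exactly $n$. By the structural observation its first coordinates are $n$ distinct elements of $V(G)$, hence all of $V(G)$, and likewise its second coordinates exhaust $V(H)$; so $C = \{(x, f(x)) : x \in V(G)\}$ for a uniquely determined bijection $f \colon V(G) \to V(H)$. It remains to verify that $f$ is an isomorphism. For $x \neq x'$, the vertices $(x,f(x))$ and $(x',f(x'))$ are adjacent in the product, so one of the two weak modular product conditions holds. These conditions are mutually exclusive, one requiring $x \sim x'$ and the other $x \not\sim x'$; reading off which condition applies shows $x \sim x'$ if and only if $f(x) \sim f(x')$, which is exactly the isomorphism property. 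Hence $G \cong H$.

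The argument is essentially a careful exercise in matching the two cases of the edge definition to the two halves of the adjacency-preservation biconditional, so I do not anticipate a serious obstacle. The one point demanding care is the equivalence in the final step: one must use that the two defining conditions of $\wprod$ are exhaustive (given adjacency in the product) and mutually exclusive, so that the biconditional is obtained in both directions rather than only one implication. The non-adjacency half of the isomorphism condition --- which an ordinary product such as $\otimes$ does not capture --- is precisely what the second edge condition of the weak modular product supplies, and keeping track of it is the crux of the whole proposition.
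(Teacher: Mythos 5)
Your proposal is correct and follows essentially the same route as the paper: the coordinate-injectivity argument is just the paper's grid argument (no edges within a row or column) stated without the grid metaphor, and both directions of the isomorphism equivalence proceed identically, with your write-up merely spelling out the mutual exclusivity of the two edge conditions that the paper leaves implicit.
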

\begin{proof}
To see that there is no clique in $G\wprod H$ larger than $n$ consider the following.
First lay the vertices of $G\wprod H$ in an $n\times n$ grid so that the vertex $(x,y)$ is in the same row as $(x',y')$ if $x=x'$, and in the same column if $y=y'$.
Then by the definition of the weak modular product there can be no edges between vertices in the same row or in the same column.
The vertices of an $n$-clique thus will occupy positions on the grid such that no two vertices are in the same row or column.
No larger clique can exist since there is no position in the grid where one can place a new vertex such that it does not share a row or column with any of the vertices already in the clique.

Now suppose there is an $n$-clique in $G\wprod H$.
The vertices $(x,y)$ in the clique represent the bijection $x\mapsto y$ for all $x\in V(G)$, $y\in V(H)$, which we denote $\sigma$.
We can see that $\sigma$ is an isomorphism because for all $x, x' \in V(G)$, $\sigma(x)\sim \sigma(x')$ if and only if $x \sim x'$, from the definition of the weak modular product.
For the converse, suppose that $G\cong H$, with $\sigma : V(G)\to V(H)$ an isomorphism.
Then from the definition of the weak modular product, we will have the collection of edges
\begin{equation*}
\left\{ \{ (x, \sigma(x)), (x', \sigma(x'))\} \,\middle|\,  x, x' \in V(G),\ x' \neq x \right\} \subseteq
E(G\wprod H).
\end{equation*}
This collection of edges induces an $n$-clique in $G\wprod H$ from the definitions of the weak modular product and isomorphism, so an $n$-clique exists if and only if $G\cong H$.
\end{proof}

Thus, for two graphs on $n$ vertices $G$ and $H$, deciding if an $n$-clique exists in $G \wprod H$ is equivalent to deciding if $G \cong H$.
It is well known that computing the clique number of a perfect graph is polynomial-time in $n$ for perfect graphs, via the Lov\'{a}sz sandwich theorem as discussed in Section~\ref{sec:wmp_and_iso}.
For a pair of graphs $(G,H)$, if $G \wprod H$ is perfect, deciding if $G \cong H$ is polynomial-time in $n$.

\section{Perfect weak modular products}

In this section, we prove Theorem~\ref{thm:main_intro}, namely we enumerate the pairs $(G,H)$ for which $G \wprod H$ is perfect. We will need some further definitions and results.

\subsection{Classes of graphs}

We now provide definitions and characterisations of families of graphs that will be of use later.

We use standard notation for named graphs; for instance, the complete, empty and path graphs on $n$ vertices are denoted by $K_n$, $E_n$ and $P_n$ respectively.

A graph $G$ on $n$ vertices is said to be \emph{bipartite} if $V(G)= V_0(G)\cup V_1(G)$ such that if $x \sim x'$ then $x \in V_0(G)$ and $x' \in V_1(G)$ or $x \in V_1(G)$ and $x' \in V_0(G)$, for every $x, x' \in V(G)$.
The sets $V_0(G)$ and $V_1(G)$ are said to be the \emph{partite sets} of $G$.
A \emph{complete bipartite} graph $K_{m,n}$, where $\abs{V_0(G)} = m$, $\abs{V_1(G)} = n$, is a bipartite graph $G$ for which every vertex in $V_0(G)$ is connected to every vertex in $V_1(G)$.
A \emph{star} is a complete bipartite graph where at least one of the partite sets has only one vertex.
Bipartite graphs are well known to be perfect.
A \emph{complete multipartite} graph $K_{n_1, n_2, \ldots, n_k}$ is defined similarly, with the relaxation that there are now $k \geq 2$ partite sets. Any induced subgraph covering two different partite sets of a complete multipartite graph is complete bipartite.

The \emph{diamond} or $K_{1,1,2}$, \emph{paw} or $Y$, \emph{cricket}, \emph{dart} and \emph{hourglass} graphs are defined in Figure~\ref{fig:cricket_dart_hourglass}.
\begin{figure}[H]
    \centering
    \includegraphics[width=0.85\textwidth]{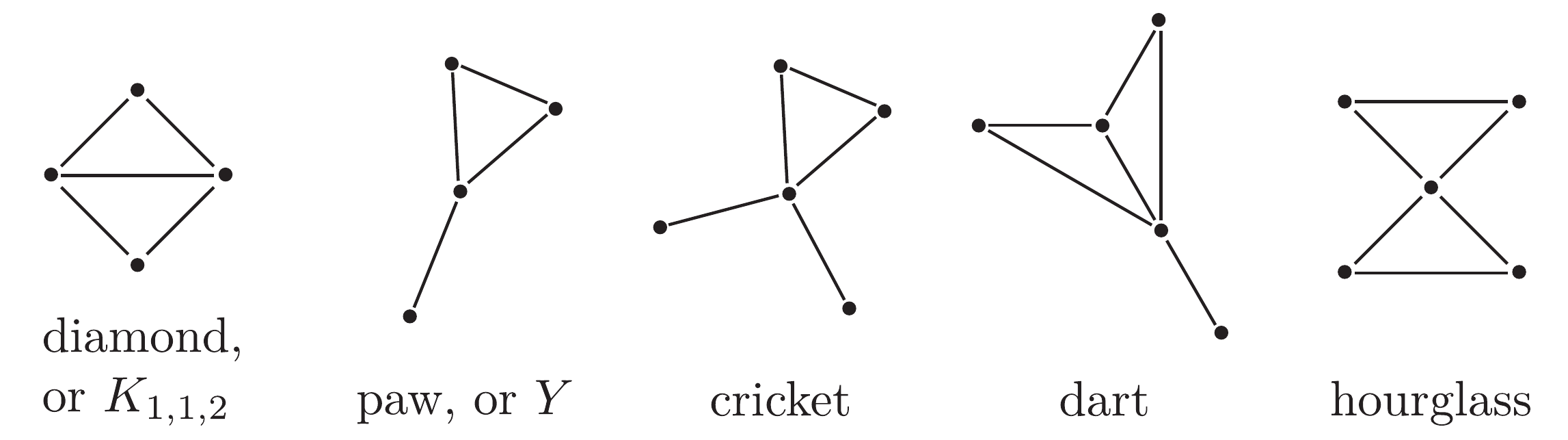}
    \caption{Some named graphs.}
    \label{fig:cricket_dart_hourglass}
\end{figure}
\begin{obs}\label{obs:namedcomplements}
    The complements of the diamond, paw, cricket, dart and hourglass are respectively: $K_2 \uplus E_2$, $P_3 \uplus E_1$, $K_{1,1,2} \uplus E_1$, $Y \uplus E_1$ and $C_4 \uplus E_1$.
\end{obs}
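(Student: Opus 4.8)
The plan is to verify the five claimed complements by direct computation from the definitions in Figure~\ref{fig:cricket_dart_hourglass}, using a single structural shortcut to avoid enumerating all $\binom{n}{2}$ potential edges. The shortcut is the elementary fact that complementation converts a \emph{dominating} vertex (one adjacent to all others, i.e.\ of degree $n-1$) into an \emph{isolated} vertex, and vice versa. Each of the five graphs has one or more dominating vertices, and these account precisely for the $E_1$ and $E_2$ summands on the right-hand sides; what remains is to identify the complement induced on the non-dominating vertices.

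Concretely, I would proceed graph by graph. The diamond $K_{1,1,2}$ is $K_4$ with one edge removed; its two degree-$3$ vertices are dominating and become isolated, while the single non-edge becomes an edge, yielding $K_2 \uplus E_2$. The paw $Y$ is a triangle with one pendant edge; its apex is dominating, hence isolated in the complement, and the two vertices non-adjacent to the pendant join it to form a path, giving $P_3 \uplus E_1$. The cricket, dart and hourglass each have a unique vertex of degree $4$, which is therefore dominating; complementation isolates it, and one checks that the induced complement on the remaining four vertices is the diamond $K_{1,1,2}$, the paw $Y$ and the four-cycle $C_4$ respectively. For instance, for the hourglass with centre $c$ and triangles $\{c,a_1,a_2\}$, $\{c,b_1,b_2\}$, the only non-edges among $\{a_1,a_2,b_1,b_2\}$ are the four pairs $\{a_i,b_j\}$, which form the complete bipartite graph $K_{2,2}\cong C_4$.

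The verification is routine once the edge sets are read correctly off the figure, so I do not anticipate a genuine obstacle; the single point requiring care is bookkeeping. In particular, one must ensure the labelling matches the figure (which diamond vertex carries the pendant in the dart, and which pairs of vertices form the two triangles in the hourglass), since a mis-reading would still produce a graph on the correct number of vertices and edges but not the claimed one. Counting non-edges as a sanity check---$1$, $2$, $5$, $4$ and $4$ for the diamond, paw, cricket, dart and hourglass respectively---confirms that each candidate complement has the right size before its isomorphism type is pinned down.
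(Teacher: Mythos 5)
Your proposal is correct: all five complement computations check out (the dominating-vertex shortcut is sound, and the non-edge counts $1,2,5,4,4$ match the claimed complements $K_2 \uplus E_2$, $P_3 \uplus E_1$, $K_{1,1,2} \uplus E_1$, $Y \uplus E_1$, $C_4 \uplus E_1$). The paper offers no proof of this Observation at all---it is treated as immediate from the definitions in Figure~\ref{fig:cricket_dart_hourglass}---so your direct verification simply makes explicit the same routine check the paper leaves to the reader.
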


\begin{lem}\label{lem:triangle_free_odd_hole}
    Let $G$ be nonbipartite and triangle-free. Then, $G$ has an induced odd cycle of order $\geq 5$.
\end{lem}
\begin{proof}
    Since $G$ is not bipartite it contains an odd cycle. Moreover, $G$ is triangle-free so this odd cycle must have order $\geq 5$. Call $C$ the smallest odd cycle in $G$. Consider a chord $e$ in $C$: since $C$ has an odd number of vertices, the subgraph induced on the union of $C$ and $e$ contains two cycles: an even cycle and an odd cycle. This gives a contradiction since the odd cycle is smaller than $C$, but $C$ is the smallest odd cycle in $G$. Thus, there is no chord in $C$ and $C$ is an induced subgraph.
\end{proof}

\begin{lem}\label{lem:p3free}
    A graph $G$ is a disjoint union of cliques if and only if it has no induced $P_3$. 
\end{lem}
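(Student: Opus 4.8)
The plan is to prove both implications directly, exploiting the observation that a forbidden induced $P_3$ is precisely a configuration of three vertices $a,b,c$ with $a\sim b$, $b\sim c$, but $a\not\sim c$. Recast in these terms, $P_3$-freeness says that no vertex has two neighbours that are themselves non-adjacent, and this local reformulation makes both directions short.

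For the forward implication I would assume $G$ is a disjoint union of cliques and argue by contradiction. Suppose $\{a,b,c\}$ induces a $P_3$ with centre $b$, so that $a\sim b$ and $c\sim b$. Since edges occur only within a single clique component, the vertices $a$, $b$ and $c$ must all lie in the same component, which is complete; hence $a\sim c$, contradicting the assumption that $\{a,b,c\}$ induces a $P_3$. Thus no induced $P_3$ can exist.

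For the converse I would assume $G$ has no induced $P_3$ and show that each connected component is a clique; since distinct components share no edges, this exhibits $G$ as a disjoint union of cliques. Fix a component $C$ and two distinct vertices $u,v\in C$, and take a shortest $u$--$v$ path $w_0=u, w_1, \ldots, w_k=v$. If $k\geq 2$, then $w_0\sim w_1$ and $w_1\sim w_2$, while $w_0\not\sim w_2$ (otherwise $w_0,w_2,w_3,\ldots,w_k$ would be a strictly shorter path), so $\{w_0,w_1,w_2\}$ induces a $P_3$, a contradiction. Hence $k=1$, i.e. $u\sim v$, and $C$ is complete.

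There is no serious obstacle here; both directions reduce to the same local fact about three vertices. The only point demanding care is the shortest-path step in the converse, where one must confirm that the first three vertices of a shortest path genuinely induce a $P_3$---in particular that the non-edge $w_0\not\sim w_2$ holds---which follows immediately from minimality of the chosen path.
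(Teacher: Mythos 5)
Your proof is correct, and in the nontrivial direction it is actually more careful than the paper's own argument. For the forward implication the paper simply lists the possible three-vertex induced subgraphs of a disjoint union of cliques ($K_3$, $K_2 \uplus E_1$, $E_3$) and observes that none is a $P_3$; your contradiction argument inside a single complete component is the same local fact in different clothing. The genuine difference is in the converse. The paper writes: if $G$ contains an induced $P_3$, then two vertices in the same component are non-adjacent, so $G$ is not a disjoint union of cliques. Read literally, this establishes ``induced $P_3$ $\Rightarrow$ not a disjoint union of cliques,'' which is just the contrapositive of the forward implication again; it never shows that a $P_3$-free graph must have complete components, which is what the backward direction requires. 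Your shortest-path argument supplies exactly that step: in a $P_3$-free graph a geodesic of length at least $2$ is impossible, since its first three vertices $w_0, w_1, w_2$ would induce a $P_3$ (the non-edge $w_0 \not\sim w_2$ holding by minimality of the path), so any two vertices in a common component are adjacent and each component is a clique. So your route costs a few extra lines but closes a gap that the paper's terse proof leaves implicit.
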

\begin{proof}
    Suppose $G$ is the disjoint union of cliques. Then, every induced subgraph on 3 vertices is either: $K_3$, $K_2 \uplus E_1$, or $E_3$, so clearly is $P_3$-free. Now suppose $G$ contains $P_3$ as an induced subgraph. Then, we have two vertices in the same connected component that are not connected, and so $G$ is not the disjoint union of cliques.
\end{proof}

The following lemma is a direct consequence of the relevant definitions.
\begin{lem}\label{lem:disjoint_compl_multi}
    A graph $G$ is complete multipartite if and only if its complement is a disjoint union of cliques. 
\end{lem}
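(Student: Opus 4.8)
The plan is to treat this as a purely definitional dichotomy: complementation interchanges adjacency with non-adjacency, and the two descriptions ``complete multipartite'' and ``disjoint union of cliques'' are precisely each other's image under this interchange. Since $\overline{\overline{G}} = G$, it would in fact suffice to verify one implication and obtain the other by applying it to $\overline{G}$, but I would spell out both directions for clarity.

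First I would take the forward direction. Suppose $G \cong K_{n_1, \ldots, n_k}$ is complete multipartite with partite sets $V_1, \ldots, V_k$, so that for distinct $x, y \in V(G)$ we have $x \sim y$ in $G$ exactly when $x$ and $y$ lie in \emph{different} partite sets. By the definition of the complement, $x \sim y$ in $\overline{G}$ exactly when $x$ and $y$ lie in the \emph{same} partite set. Hence $\overline{G}$ contains every edge within each $V_i$ and no edge between distinct $V_i$, i.e.\ $\overline{G} \cong K_{n_1} \uplus \cdots \uplus K_{n_k}$, a disjoint union of cliques. For the converse I would run the same argument in reverse: if $\overline{G}$ is a disjoint union of cliques with connected components $C_1, \ldots, C_k$, each a clique, then $x \sim y$ in $\overline{G}$ iff $x$ and $y$ lie in a common component, so $x \sim y$ in $G = \overline{\overline{G}}$ iff $x \neq y$ and $x, y$ lie in different components. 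Taking the $C_i$ as partite sets exhibits $G$ as complete multipartite.

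As an alternative I would note the forbidden-subgraph route, which ties the statement directly to Lemma~\ref{lem:p3free}: one checks that $G$ is complete multipartite iff non-adjacency (together with equality) is an equivalence relation on $V(G)$, equivalently iff $G$ has no induced $K_2 \uplus E_1 = \overline{P_3}$; since a graph is $\overline{P_3}$-free precisely when its complement is $P_3$-free, Lemma~\ref{lem:p3free} closes the loop. Given how direct the definitional argument is, there is no genuine obstacle here; the only thing to be careful about is the bookkeeping of the involution $\overline{\overline{G}} = G$ and matching up partite sets with components correctly.
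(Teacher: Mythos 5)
Your proof is correct and takes the same route as the paper: the paper gives no written proof, stating only that the lemma ``follows directly from the relevant definitions,'' and your argument is precisely that definitional verification spelled out (complementation swaps the ``adjacent iff in different partite sets'' condition with the ``adjacent iff in the same component'' condition). The alternative $\overline{P_3}$-free route you mention is also sound but unnecessary; the direct argument suffices.
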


\begin{cor}\label{cor:compmulti_K2uE1free}
    A graph $G$ is complete multipartite if and only if it is $(K_2 \uplus E_1)$-free.
\end{cor}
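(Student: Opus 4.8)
The plan is to chain together the two preceding lemmas by passing to complements. The first thing I would record is the elementary fact that $K_2 \uplus E_1$ is precisely the complement of $P_3$. Writing $P_3$ as the path $a - b - c$, its only non-edge is $\{a,c\}$, so $\overline{P_3}$ consists of the single edge $\{a,c\}$ together with the isolated vertex $b$; that is exactly $K_2 \uplus E_1$. (This is also consistent with the list in Observation~\ref{obs:namedcomplements}, which handles the analogous named complements.)

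Next I would invoke the general principle that forbidden induced subgraphs dualise under complementation. Concretely, for any vertex subset $U \subseteq V(G)$ one has $\overline{G}[U] = \overline{G[U]}$, since two distinct vertices of $U$ are adjacent in $\overline{G}[U]$ exactly when they are non-adjacent in $G[U]$. Consequently $G$ contains an induced copy of a graph $F$ if and only if $\overline{G}$ contains an induced copy of $\overline{F}$. Applying this with $F = K_2 \uplus E_1$ and using the observation above, $G$ is $(K_2 \uplus E_1)$-free if and only if $\overline{G}$ is $P_3$-free.

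Finally I would close the chain of equivalences. By Lemma~\ref{lem:p3free}, $\overline{G}$ is $P_3$-free if and only if $\overline{G}$ is a disjoint union of cliques; and by Lemma~\ref{lem:disjoint_compl_multi}, $\overline{G}$ is a disjoint union of cliques if and only if $G$ is complete multipartite. Stringing these together with the previous paragraph gives: $G$ is $(K_2 \uplus E_1)$-free $\iff$ $\overline{G}$ is $P_3$-free $\iff$ $\overline{G}$ is a disjoint union of cliques $\iff$ $G$ is complete multipartite, which is the claim.

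There is no substantial obstacle in this argument; it is a routine consequence of the two lemmas. The only point that warrants a moment's care is the middle step, namely verifying that induced-subgraph-freeness transfers correctly under complementation via $\overline{G}[U] = \overline{G[U]}$, so that forbidding $K_2 \uplus E_1$ in $G$ is genuinely equivalent to forbidding $P_3$ in $\overline{G}$.
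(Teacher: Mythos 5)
Your proof is correct and follows essentially the same route as the paper: the paper's proof simply says to combine Lemma~\ref{lem:p3free} with Lemma~\ref{lem:disjoint_compl_multi}, and your argument spells out exactly that combination, with the complementation step ($\overline{G}[U] = \overline{G[U]}$ and $\overline{P_3} \cong K_2 \uplus E_1$) made explicit rather than left implicit.
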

\begin{proof}
    Combine Lemma~\ref{lem:p3free} with Lemma~\ref{lem:disjoint_compl_multi}.
\end{proof}

\begin{lem}\label{lem:compbipP4}
     Any connected bipartite graph $G$ that is not complete has an induced $P_4$. 
\end{lem}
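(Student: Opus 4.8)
The plan is to produce the induced $P_4$ as the initial segment of a shortest path joining a well-chosen pair of non-adjacent vertices. First I would fix partite sets $V_0(G)$ and $V_1(G)$ witnessing that $G$ is bipartite. Since $G$ is bipartite but not \emph{complete} bipartite, not every vertex of $V_0(G)$ is joined to every vertex of $V_1(G)$, so there exist $a \in V_0(G)$ and $b \in V_1(G)$ with $a \not\sim b$. These non-adjacent vertices lying in opposite partite sets are the seed of the construction.

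Next, I would invoke connectedness of $G$ to pick a \emph{shortest} path $P = v_0 v_1 \cdots v_k$ with $v_0 = a$ and $v_k = b$, and establish that $k \geq 3$. The bound $k \geq 2$ is immediate because $a \not\sim b$. To upgrade this to $k \geq 3$ I would use bipartiteness: along any path the vertices alternate between $V_0(G)$ and $V_1(G)$, so $v_i \in V_0(G)$ for even $i$ and $v_i \in V_1(G)$ for odd $i$; since the endpoint $b = v_k$ lies in $V_1(G)$, the length $k$ must be odd, and hence $k \geq 3$.

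Finally, I would check that the first four vertices $v_0, v_1, v_2, v_3$ induce a $P_4$. The three consecutive pairs are adjacent by construction, so it remains to rule out the possible chords $\{v_0,v_2\}$, $\{v_1,v_3\}$ and $\{v_0,v_3\}$. The standard fact is that a shortest $v_0$--$v_k$ path is chordless: if $v_i \sim v_j$ for some $j > i+1$, then deleting the vertices strictly between $v_i$ and $v_j$ yields a strictly shorter $v_0$--$v_k$ path, contradicting minimality of $P$. Applying this to the three candidate chords shows that $G[\{v_0,v_1,v_2,v_3\}]$ is exactly the path on four vertices, giving the desired induced $P_4$.

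The argument is short, and the only point requiring care is the parity step: it is what guarantees the shortest path has length at least three (rather than merely at least two) and thus supplies a fourth vertex. Note that the path may have length greater than $3$, but this causes no difficulty, since for any shortest path of length $\geq 3$ its first four vertices already induce a $P_4$ by the chordlessness observation.
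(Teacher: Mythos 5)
Your proof is correct and follows essentially the same route as the paper: pick a non-adjacent pair $a \in V_0(G)$, $b \in V_1(G)$, take a shortest $a$--$b$ path, use the bipartite parity argument to force its length to be odd and hence at least $3$, and extract the induced $P_4$ from the (chordless) shortest path. The only difference is that you spell out the chordlessness step explicitly, which the paper leaves implicit.
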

\begin{proof}
    Let $u \in V_0(G)$ and $v\in V_1(G)$. The shortest path $P(u,v)$ between $u$ and $v$ is odd length. If the length of this path is $1$ for all pairs $(u,v)$, $G$ is complete bipartite; else, the length of $P(u,v)$ is $3$ or greater for some pair $(u,v)$, and so we have an induced $P_{2k}$ for some $k \geq 2$, proving the lemma. 
\end{proof}

\begin{lem}\label{lem:diamondfree_comp_multip}
    A complete multipartite graph $G$ is diamond-free if and only if it is a clique or complete bipartite.
\end{lem}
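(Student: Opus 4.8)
The plan is to prove the two directions separately, reading the statement as an equivalence between a structural description (clique or complete bipartite) and a forbidden-induced-subgraph description (diamond-free), and recalling that the diamond is precisely $K_{1,1,2}$. For the easy direction I would assume $G$ is a clique or complete bipartite and rule out an induced diamond directly. If $G \cong K_n$, then every four vertices induce $K_4$, which differs from the diamond by exactly one edge, so no induced diamond occurs (vacuously so if $n < 4$). If $G$ is complete bipartite, then $G$ is bipartite and hence triangle-free; since the diamond contains a triangle, no four vertices of $G$ can induce it.

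For the converse I would argue by contraposition. Write $G = K_{n_1,\ldots,n_k}$ with $k \geq 2$ partite sets and suppose $G$ is neither a clique nor complete bipartite. The first step is to translate these two exclusions into statements about the parameters. A complete multipartite graph is complete bipartite exactly when $k = 2$ (for $k \geq 3$ one vertex from each of three parts forms a triangle, so $G$ cannot be bipartite); hence failing to be complete bipartite gives $k \geq 3$. Likewise, if every part were a singleton then $G$ would be $K_k$, a clique; hence failing to be a clique gives a part $V_i$ with $\abs{V_i} \geq 2$. So we may fix two vertices $c, d \in V_i$ and, using $k \geq 3$, two further vertices $a \in V_j$ and $b \in V_l$ with $j, l \neq i$ and $j \neq l$.

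The second step is to check that $\{a,b,c,d\}$ induces a diamond. Because $G$ is complete multipartite, two of these vertices are adjacent precisely when they lie in different parts; among our four vertices every pair lies in distinct parts except $\{c,d\} \subseteq V_i$. Thus all edges are present save the single non-edge $cd$, which is exactly the adjacency pattern of $K_{1,1,2}$. Hence $G[\{a,b,c,d\}]$ is an induced diamond, contradicting diamond-freeness and establishing the contrapositive.

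I do not anticipate a genuine obstacle; the entire content sits in the bookkeeping of the second paragraph, namely confirming that excluding both ``clique'' and ``complete bipartite'' forces $k \geq 3$ \emph{and} a part of size at least two \emph{simultaneously}, so that the three parts needed to host $K_{1,1,2}$ are actually available. The one point to keep straight is that $a$ and $b$ must come from two different parts (so that $a \sim b$) and both from parts other than $V_i$ (so that each is adjacent to $c$ and to $d$), which is exactly what the condition $k \geq 3$ supplies.
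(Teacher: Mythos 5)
Your proposal is correct and follows essentially the same route as the paper: the backward direction by noting cliques only induce $K_4$ on four vertices and complete bipartite graphs are triangle-free, and the forward direction by contraposition, extracting $k \geq 3$ parts plus a part of size at least two and exhibiting the induced $K_{1,1,2}$. The only difference is that you spell out the adjacency bookkeeping for the four chosen vertices explicitly, which the paper leaves implicit.
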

\begin{proof}
    $(\Rightarrow)$ We prove the contrapositive. Suppose $G$ is complete multipartite and not bipartite and not a clique. $G$ must have at $k \geq 3$ partite sets, for if it has two partite sets it would be complete bipartite. Now if every partite set in $G$ has one vertex then $G \cong K_k$, but $G$ is not a clique by assumption. So, $G$ has an induced $K_{1,1,2}$, a diamond.

    $(\Leftarrow)$ If $G$ is a clique then any induced subgraph on 4 vertices is isomorphic to $K_4 \not\cong K_{1,1,2}$. Bipartite graphs are triangle-free so are trivially diamond-free.
\end{proof}

\begin{lem}[Paw-free graphs~{\cite[Theorem 1]{Olariu1988}}]
    \label{lem:paw_free_graphs}
    A graph $G$ is a paw-free graph if and only if each component of $G$ is triangle-free or complete multipartite. 
\end{lem}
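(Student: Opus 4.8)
The plan is to prove both implications, reducing throughout to a single connected graph: since the paw is connected and paw-freeness is inherited by induced subgraphs, $G$ is paw-free if and only if each component is, so it suffices to analyse one component $C$. For sufficiency ($\Leftarrow$) I would first observe that the paw contains an induced $K_2\uplus E_1$, namely the edge of its triangle not meeting the pendant vertex together with that pendant vertex. A triangle-free component obviously contains no paw, while a complete multipartite component is $(K_2\uplus E_1)$-free by Corollary~\ref{cor:compmulti_K2uE1free} and is therefore paw-free. This settles the easy direction.

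For necessity ($\Rightarrow$) assume $C$ is connected and paw-free. If $C$ is triangle-free we are done, so suppose $C$ contains a triangle; I would then show $C$ is complete multipartite, which by Corollary~\ref{cor:compmulti_K2uE1free} amounts to proving that $C$ has no induced $K_2\uplus E_1$. The engine is a local counting rule: any vertex outside a triangle $\{a,b,c\}$ is adjacent to exactly $0$, $2$, or $3$ of $a,b,c$, since adjacency to exactly one of them immediately exhibits a paw. I would upgrade this to the claim that in a connected paw-free graph \emph{no} vertex misses an entire triangle $T$. To see this, take a vertex nonadjacent to all of $T$ and a shortest path from it to $T$; the vertex $w$ immediately before $T$ on this path is adjacent to some vertex of $T$, hence (by the counting rule) to at least two, say $a$ and $b$, so $\{a,b,w\}$ is a triangle, while the predecessor $u$ of $w$ is --- by minimality of the path --- nonadjacent to all of $T$ yet adjacent to $w$, giving a paw on $\{a,b,w,u\}$. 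Hence every vertex meets each triangle of $C$ in $2$ or $3$ vertices.

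Two consequences then finish the argument. First, every edge lies in a triangle: given $x\sim y$, both $x$ and $y$ are adjacent to at least two vertices of a fixed triangle, so by pigeonhole they share a common neighbour there, producing a triangle through $x$ and $y$ (the degenerate cases where $x$ or $y$ already lies on the fixed triangle are checked directly). Second, if $\{x,y\}$ were an edge with some $z$ nonadjacent to both, then taking a triangle on $\{x,y\}$ the counting rule forces $z$ to be adjacent to $0$ of it, contradicting the claim of the previous paragraph. Thus $C$ contains no induced $K_2\uplus E_1$ and is complete multipartite by Corollary~\ref{cor:compmulti_K2uE1free}.

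I expect the main obstacle to be the necessity direction, and within it the shortest-path extraction of a paw that rules out a vertex missing an entire triangle: the counting rule itself is immediate, but using connectivity to force the contradiction --- and cleanly handling the degenerate configurations in which the relevant vertices coincide with triangle vertices --- is where the care is required.
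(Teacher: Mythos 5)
Your proposal is correct, but there is nothing in the paper to compare it against: the paper does not prove this lemma, it imports it by citation as Theorem~1 of Olariu~\cite{Olariu1988}. What you have written is a valid, self-contained replacement for that citation. The component reduction is sound (the paw is connected, and paw-freeness passes to induced subgraphs); the sufficiency direction correctly observes that the paw contains an induced $K_2 \uplus E_1$, so complete multipartite components are paw-free by Corollary~\ref{cor:compmulti_K2uE1free}, while triangle-free components are trivially paw-free. The necessity direction, which carries all the content, also checks out: the counting rule (a vertex outside a triangle sees exactly $0$, $2$ or $3$ of its vertices, since seeing exactly one is a paw) is immediate; the shortest-path argument correctly produces a paw on $\{a,b,w,u\}$, since minimality forces the predecessor $u$ to miss the triangle entirely while $w$ sees at least two of its vertices; the pigeonhole step ($2+2>3$) puts every edge in a triangle; and the final step correctly rules out an induced $K_2 \uplus E_1$ --- note in particular that the isolated vertex $z$ cannot coincide with the apex $t$ of the triangle on $\{x,y\}$, since $t \sim x$ while $z \not\sim x$, so the counting rule and the no-miss claim genuinely collide. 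Phrasing the target as ``$(K_2 \uplus E_1)$-free'' and closing via Corollary~\ref{cor:compmulti_K2uE1free} is exactly the right interface to the rest of the paper, and is equivalent to the classical formulation that non-adjacency is transitive on a connected paw-free graph containing a triangle. The one thing your write-up would need before being dropped into the paper is to expand the ``checked directly'' degenerate cases of the every-edge-in-a-triangle step (when $x$ or $y$ lies on the fixed triangle), but as sketched these are one-line verifications and the claims made about them are accurate.
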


\begin{lem}\label{lem:p4pawfree}
    If $G$ is connected and $(P_4,\text{paw})$-free, then it is complete multipartite.
\end{lem}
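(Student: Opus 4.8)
The plan is to use Olariu's characterisation of paw-free graphs (Lemma~\ref{lem:paw_free_graphs}) to split into two cases, and then to show that the troublesome case collapses. Since $G$ is connected it has a single component, so Lemma~\ref{lem:paw_free_graphs} tells us that $G$ is either complete multipartite---in which case there is nothing to prove---or triangle-free. It therefore remains to handle the triangle-free case, where the goal is to show that a connected, triangle-free, $P_4$-free graph is in fact complete bipartite, and hence complete multipartite.

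First I would argue that $G$ is bipartite. Suppose not; then, being triangle-free and nonbipartite, $G$ contains an induced odd cycle $C$ of order $\geq 5$ by Lemma~\ref{lem:triangle_free_odd_hole}. The key observation is that any induced cycle of order $\geq 5$ already contains an induced $P_4$: taking four consecutive vertices along $C$ yields a path on four vertices whose endpoints are at distance three around the cycle and hence nonadjacent, with no further chords because $C$ is induced. This contradicts the $P_4$-freeness of $G$, so $G$ must be bipartite.

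Having established that $G$ is connected and bipartite, I would invoke Lemma~\ref{lem:compbipP4}: any connected bipartite graph that is not complete bipartite contains an induced $P_4$. Since $G$ is $P_4$-free, this forces $G$ to be complete bipartite. Finally, a complete bipartite graph is complete multipartite (with two partite sets), which completes the argument.

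The proof is short once Lemmas~\ref{lem:paw_free_graphs}, \ref{lem:triangle_free_odd_hole} and~\ref{lem:compbipP4} are in hand, so I do not anticipate a serious obstacle. The only step requiring a genuine (if elementary) observation is the claim that an induced odd cycle of length at least five contains an induced $P_4$; this is precisely what connects the triangle-free structure supplied by Olariu's theorem to the $P_4$-free hypothesis, and is the hinge on which the whole argument turns.
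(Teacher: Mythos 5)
Your proof is correct and follows essentially the same route as the paper's: split via Olariu's characterisation (Lemma~\ref{lem:paw_free_graphs}), rule out the nonbipartite triangle-free case because an induced odd cycle of length at least five contains an induced $P_4$, and finish with Lemma~\ref{lem:compbipP4}. The only difference is that you spell out the odd-cycle-contains-$P_4$ observation and the appeal to Lemma~\ref{lem:triangle_free_odd_hole} explicitly, which the paper leaves implicit.
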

\begin{proof}
    From Lemma~\ref{lem:paw_free_graphs}, if $G$ is connected and paw-free, then it is complete multipartite or triangle-free. If $G$ is triangle-free then it is bipartite, for otherwise it has an odd hole, but $G$ is $P_4$-free and every odd hole has an induced $P_4$. So, $G$ must be bipartite if it is triangle-free. Moreover, since $G$ is $P_4$-free, if it is bipartite then it is complete bipartite from Lemma~\ref{lem:compbipP4}.
\end{proof}

\begin{lem}\label{lem:disjunstrscliqcpml}
    A graph $G$ is a disjoint union of cliques and stars with two or more connected components if and only if its complement $\overline{G}$ is connected and $(P_4, \text{cricket, dart, hourglass})$-free.
\end{lem}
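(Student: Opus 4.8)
The plan is to move everything to the $G$-side via complementation and then treat the two implications separately. By Observation~\ref{obs:namedcomplements}, an induced cricket, dart, or hourglass in $\overline{G}$ corresponds exactly to an induced $K_{1,1,2}\uplus E_1$ (a diamond plus an isolated vertex), $Y\uplus E_1$ (a paw plus an isolated vertex), or $C_4\uplus E_1$ in $G$; and since $\overline{P_4}=P_4$, an induced $P_4$ in $\overline{G}$ corresponds to an induced $P_4$ in $G$. Hence the condition ``$\overline{G}$ is $(P_4,\text{cricket},\text{dart},\text{hourglass})$-free'' is equivalent to ``$G$ is $(P_4,\,K_{1,1,2}\uplus E_1,\,Y\uplus E_1,\,C_4\uplus E_1)$-free.'' Throughout I may assume $\abs{V(G)}\geq 2$, the single-vertex case $G\cong K_1$ being degenerate.

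For the forward direction, suppose $G$ is a disjoint union of cliques and stars with at least two components. Then $G$ is disconnected, so $\overline{G}$ is connected by the standard fact that the complement of a disconnected graph is connected. Each of $P_4$, the diamond, the paw, and $C_4$ is connected and is an induced subgraph of neither a clique (induced subgraphs of cliques are cliques) nor a star (stars are triangle-free, $C_4$-free, and have longest induced path $P_3$); being connected, none of them can span two components, so $G$ is in fact $(P_4,\text{diamond},\text{paw},C_4)$-free. A fortiori $G$ is free of the weaker ``$\uplus E_1$'' patterns, and by the complementation dictionary $\overline{G}$ is $(P_4,\text{cricket},\text{dart},\text{hourglass})$-free.

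The reverse direction is where the work lies. Assume $\overline{G}$ is connected and $G$ is $(P_4,\,K_{1,1,2}\uplus E_1,\,Y\uplus E_1,\,C_4\uplus E_1)$-free. The first step is to show $G$ is disconnected. Since $G$ is $P_4$-free it is a cograph, and a connected cograph on at least two vertices is a join of two nonempty graphs $G_1,G_2$ (every vertex of $G_1$ adjacent to every vertex of $G_2$), whose complement $\overline{G}=\overline{G_1}\uplus\overline{G_2}$ is then disconnected; as $\overline{G}$ is connected by hypothesis, $G$ must instead be disconnected, i.e.\ it has at least two components. With two or more components in hand I can strip the trailing $E_1$ from each forbidden pattern: an induced diamond, paw, or $C_4$ lying in one component, together with any vertex of a second component, would induce $K_{1,1,2}\uplus E_1$, $Y\uplus E_1$, or $C_4\uplus E_1$ respectively, a contradiction. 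Thus $G$ is $(P_4,\text{diamond},\text{paw},C_4)$-free outright. It then remains to identify each component $C$: since $G$ is paw-free, Lemma~\ref{lem:paw_free_graphs} gives that $C$ is triangle-free or complete multipartite. If $C$ is complete multipartite, then being diamond-free it is a clique or complete bipartite by Lemma~\ref{lem:diamondfree_comp_multip}, and a complete bipartite $K_{m,n}$ with $m,n\geq 2$ contains an induced $C_4$, so $C_4$-freeness forces $\min(m,n)\leq 1$, i.e.\ $C$ is a star. If instead $C$ is triangle-free, then it is bipartite---otherwise Lemma~\ref{lem:triangle_free_odd_hole} supplies an induced odd cycle of order $\geq 5$, which contains an induced $P_4$, contradicting $P_4$-freeness---and a connected $P_4$-free bipartite graph is complete bipartite by the contrapositive of Lemma~\ref{lem:compbipP4}, which $C_4$-freeness again collapses to a star. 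Either way $C$ is a clique or a star, so $G$ is a disjoint union of cliques and stars with at least two components.

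The main obstacle is the disconnection step: everything downstream depends on being able to remove the isolated vertex from the forbidden patterns, which in turn requires knowing $G$ has at least two components. I would establish this through the cograph join decomposition (equivalently, the fact that a connected $P_4$-free graph on $\geq 2$ vertices has a disconnected complement); the remaining case analysis is then a routine assembly of Lemmas~\ref{lem:paw_free_graphs}, \ref{lem:diamondfree_comp_multip}, \ref{lem:triangle_free_odd_hole}, and~\ref{lem:compbipP4}, with $C_4$-freeness doing the final work of turning complete bipartite components into stars.
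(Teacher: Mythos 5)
Your proof is correct and takes essentially the same route as the paper's: the same complement dictionary (Observation~\ref{obs:namedcomplements} plus self-complementarity of $P_4$), the same key external fact that a connected $P_4$-free graph on at least two vertices has disconnected complement (Seinsche/Lerchs, which you phrase via the cograph join decomposition), and the same structural lemmas (Lemmas~\ref{lem:paw_free_graphs}, \ref{lem:diamondfree_comp_multip}, \ref{lem:compbipP4}, \ref{lem:triangle_free_odd_hole}). The differences are cosmetic: you argue the reverse direction directly where the paper argues by contrapositive, and you are in fact slightly more careful than the paper in flagging the degenerate $G\cong K_1$ case, where the stated equivalence technically fails.
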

\begin{proof}
    $(\Rightarrow)$ If $G$ is a disjoint union of two or more cliques and stars, $\overline{G}$ is connected, by the following: suppose $G$ has $k \geq 2$ connected components, i.e. $V(G) := V_1 \uplus \cdots \uplus V_k$ where the $V_i$ are pairwise disconnected.
    If we have $x \in V_i$, $x' \in V_j$ for $i \neq j$, then $x \sim_{\overline{G}} x'$.
    If $x,x' \in V_i$, choose any $x'' \in V_j$ for $i \neq j$ and $(x,x'',x')$ is a path in $\overline{G}$.
    Since there is a path between any $x,x' \in V(\overline{G})$, $\overline{G}$ is connected.
    We have that $\overline{G}$ is $P_4$-free, since a disjoint union of cliques is $P_3$-free, disjoint unions of complete bipartite graphs are $P_4$-free by Lemma~\ref{lem:compbipP4} and $P_4$ is self-complementary. Now, $\overline{G}$ is $(\text{cricket, dart, hourglass})$-free if and only if $G$ is $(K_{1,1,2}\uplus E_1, Y \uplus E_1, C_4 \uplus E_1 )$-free, which is necessarily true if $G$ is $(\text{diamond}, \text{paw}, K_{2,2})$-free.
    This is satisfied when $G$ is a disjoint union of stars and cliques, by considering each of the components piecewise with Lemmas~\ref{lem:paw_free_graphs}~and~\ref{lem:compbipP4}.

    $(\Leftarrow)$ We prove the contrapositive. Suppose $G$ has one connected component. Then, $\overline{G}$ is disconnected or contains an induced $P_4$, since by~{\cite[Aux. Thm.]{Seinsche1974}},~\cite{Lerchs72}, the complement of a connected $P_4$-free graph is disconnected.
    Now suppose $G$ has two or more components. If $G$ has an induced paw then it also has an induced $Y \uplus E_1$ and so $\overline{G}$ contains an induced dart by Observation~\ref{obs:namedcomplements}.
    If $G$ is paw-free then $G$ is a disjoint union of complete multipartite graphs by Lemma~\ref{lem:p4pawfree}.
    Let $G$ be paw-free.
    If $G$ has an induced diamond then it contains an induced $K_{1,1,2}\uplus E_1$ and so $\overline{G}$ contains an induced cricket. If $G$ has no induced diamond then it is a disjoint union of cliques and complete bipartite graphs by Lemma~\ref{lem:diamondfree_comp_multip}. If $G$ contains an induced $K_{2,2} \cong C_4$ then it has an induced $C_4 \uplus E_1$ and so $\overline{G}$ has an induced hourglass by Observation~\ref{obs:namedcomplements}.
    We are left with $G$ being the disjoint union of two or more stars and cliques, which we have already proved to be $(P_4, \text{cricket, dart, hourglass})$-free and connected, so the result is proven.
\end{proof}

\begin{lem}
    Let $G$ be a complete multipartite graph. Then it is connected and\\$(P_4, \mathrm{dart, cricket, hourglass})$-free.
\end{lem}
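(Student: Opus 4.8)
The plan is to dispatch the two assertions separately, treating connectivity directly and the four forbidden induced subgraphs through a single unifying observation. For connectivity I would argue straight from the definition: in a complete multipartite graph any two vertices lying in distinct partite sets are adjacent, and any two vertices in a common partite set $V_i$ are each adjacent to any vertex of another partite set $V_j$ (which exists, since there are $k \geq 2$ partite sets), hence are joined by a path of length two. Thus every pair of vertices is connected by a path and $G$ is connected.

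For the claim that $G$ is $(P_4, \mathrm{dart}, \mathrm{cricket}, \mathrm{hourglass})$-free, the key step is to invoke Corollary~\ref{cor:compmulti_K2uE1free}, which states that a complete multipartite graph contains no induced $K_2 \uplus E_1$. Being $(K_2 \uplus E_1)$-free is a hereditary property, passed to every induced subgraph, so it suffices to show that each of $P_4$, the dart, the cricket, and the hourglass contains an induced copy of $K_2 \uplus E_1$. If any of these appeared as an induced subgraph of $G$, it would then exhibit an induced $K_2 \uplus E_1$ inside $G$, contradicting the corollary.

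Carrying this out is a short finite verification, one triple of vertices per graph, read off from the structure of each named graph (equivalently, from the complements recorded in Observation~\ref{obs:namedcomplements}). In $P_4 = a\text{--}b\text{--}c\text{--}d$ the vertices $\{a,b,d\}$ induce the edge $ab$ together with the isolated vertex $d$. The cricket is a triangle with two pendants at a common vertex, so an edge of the triangle together with a pendant gives $K_2 \uplus E_1$. The dart is a diamond with a pendant attached to a degree-three vertex, so the pendant (adjacent only to its neighbour) together with an edge of the diamond avoiding that neighbour yields the required induced subgraph. The hourglass is two triangles sharing a single vertex, so an edge of one triangle together with a non-apex vertex of the other works. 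In each case the chosen triple induces exactly one edge and one isolated vertex, as needed.

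I do not anticipate a genuine obstacle: the conceptual content is entirely carried by Corollary~\ref{cor:compmulti_K2uE1free} and the heredity of a forbidden-subgraph property. The only point requiring care is the bookkeeping of which triple realises $K_2 \uplus E_1$ in each named graph, in particular checking that the third vertex is nonadjacent to both endpoints of the chosen edge.
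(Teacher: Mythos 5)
Your proof is correct, but it takes a slightly different route from the paper's. The paper's argument rests on the observation that complete multipartiteness is itself a hereditary property: every induced subgraph of a complete multipartite graph is complete multipartite, and since none of $P_4$, dart, cricket, hourglass is complete multipartite, none can occur as an induced subgraph (connectivity is simply asserted as trivial). You instead route everything through Corollary~\ref{cor:compmulti_K2uE1free}: complete multipartite graphs are $(K_2 \uplus E_1)$-free, this is a forbidden-induced-subgraph property and hence hereditary, and each of the four named graphs contains an induced $K_2 \uplus E_1$, which you exhibit explicitly (your triples are all correct, and your descriptions of the named graphs agree with Observation~\ref{obs:namedcomplements}). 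The trade-off: the paper's proof is shorter and more conceptual, but it leaves two facts to inspection --- that the class is closed under induced subgraphs, and that the four graphs lie outside it --- whereas yours replaces both by one cited equivalence plus four concrete three-vertex checks, making the verification fully explicit; your connectivity argument is likewise spelled out rather than asserted. One small point of care: in the hourglass, the edge you pair with a non-apex vertex of the other triangle must be the base edge avoiding the shared apex (any edge through the apex fails, since the apex is adjacent to everything); your closing remark about checking nonadjacency covers this, but it is worth stating which edge is meant.
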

\begin{proof}
    Trivially, $G$ is connected. Now, observe that every induced subgraph of a complete multipartite graph is complete multipartite. Equivalently, if $G$ has an induced subgraph that is not complete multipartite, $G$ is not complete multipartite. If $G$ contains an induced $X \in \{P_4, \mathrm{dart, cricket, hourglass}\}$, then we have a contradiction, since $X$ is not complete multipartite.
\end{proof}

\begin{lem}\label{lem:pawfree_perfect}
    A graph $G$ is $(\text{odd-hole, paw})$-free if and only if each component of G is bipartite or complete multipartite.
\end{lem}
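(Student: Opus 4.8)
The plan is to prove both directions by reducing everything to the per-component structure, leaning heavily on the paw-free characterisation in Lemma~\ref{lem:paw_free_graphs} and the triangle-free/nonbipartite dichotomy in Lemma~\ref{lem:triangle_free_odd_hole}. The first thing I would note is that both an odd hole and a paw are \emph{connected} graphs, so any induced copy of either must live entirely inside a single connected component of $G$. Consequently, being $(\text{odd-hole}, \text{paw})$-free is a property that $G$ satisfies if and only if each of its components satisfies it, and it suffices to argue component by component.

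For the forward direction ($\Rightarrow$), suppose $G$ is $(\text{odd-hole}, \text{paw})$-free and let $C$ be any component. Since $G$ is paw-free, Lemma~\ref{lem:paw_free_graphs} tells us that $C$ is either complete multipartite, in which case we are done, or triangle-free. In the triangle-free case I would show that $C$ must in fact be bipartite: if it were not, Lemma~\ref{lem:triangle_free_odd_hole} would produce an induced odd cycle of order $\geq 5$, i.e.\ an odd hole, contradicting the hypothesis. Hence every component is bipartite or complete multipartite.

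For the backward direction ($\Leftarrow$), by the opening observation it suffices to check that each of the two allowed component types is individually $(\text{odd-hole}, \text{paw})$-free. A bipartite component contains no odd cycle at all, hence no odd hole, and it is triangle-free, so it cannot contain a paw (a paw has an induced $K_3$). A complete multipartite component is paw-free by Lemma~\ref{lem:paw_free_graphs} (taking the single component to be itself), and it is odd-hole-free because every induced subgraph of a complete multipartite graph is again complete multipartite, whereas an odd hole $C_{2k+1}$ with $k \geq 2$ is not complete multipartite: it contains an induced $K_2 \uplus E_1$ and so fails the criterion of Corollary~\ref{cor:compmulti_K2uE1free}.

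I do not expect a genuine obstacle here, since the whole argument is a short assembly of lemmas already in hand. The only step requiring a moment's care is verifying that an odd hole is not complete multipartite, which I would settle cleanly through Corollary~\ref{cor:compmulti_K2uE1free} rather than by an ad hoc inspection; everything else follows immediately from the connectedness of paws and odd holes together with Lemmas~\ref{lem:paw_free_graphs} and~\ref{lem:triangle_free_odd_hole}.
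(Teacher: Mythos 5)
Your proof is correct and follows essentially the same route as the paper's: the forward direction combines Lemma~\ref{lem:paw_free_graphs} with Lemma~\ref{lem:triangle_free_odd_hole}, and the backward direction checks each component type, using that induced subgraphs of complete multipartite graphs remain complete multipartite. Your explicit remark that paws and odd holes are connected (so the property is genuinely componentwise) and your use of Corollary~\ref{cor:compmulti_K2uE1free} to certify that an odd hole is not complete multipartite are small refinements of details the paper leaves implicit, not a different approach.
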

\begin{proof}
    $(\Rightarrow)$ If $G$ is paw-free then by Lemma~\ref{lem:paw_free_graphs} each component is triangle-free or complete multipartite. Let $X$ be any triangle-free component of $G$. Since $X$ is odd-hole free, $X$ is bipartite by Lemma~\ref{lem:triangle_free_odd_hole}.
    
    $(\Leftarrow)$ Let $X$ be a given component of $G$. Suppose \emph{i.} $X$ is bipartite. By definition, $X$ has no odd hole since it contains no odd cycles. Moreover, the paw contains a triangle as an induced subgraph so it cannot be an induced subgraph of $X$; \emph{ii.} $X$ is complete multipartite. Whence, every induced subgraph of $X$ is complete multipartite also. An induced odd hole or paw in $X$ gives a contradiction, since neither of these graphs is complete multipartite. 
\end{proof}

\subsection{Auxiliary results}

We list in this section results that will be used throughout the proof of Theorem~\ref{thm:main_intro}.
Proposition~\ref{prop:strongperfthm}, the strong perfect graph theorem, will be of particular utility.
The sequel follows directly from definitions.

\begin{lem}\label{lem:wmp_comp_perf}
    The product graph $G \wprod H$ is perfect if and only if $\overline{G} \wprod \overline{H}$ is perfect.
\end{lem}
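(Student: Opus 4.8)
The plan is to show that $G \wprod H$ and $\overline{G} \wprod \overline{H}$ are not merely both perfect or both imperfect, but are in fact the \emph{same} graph on the vertex set $V(G) \times V(H)$; perfection of one is then trivially equivalent to perfection of the other.

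First I would invoke Lemma~\ref{lem:GprodHunion} directly on $G \wprod H$ to write $G \wprod H = G \otimes H \cup \overline{G} \otimes \overline{H}$. Then I would apply the same lemma to the pair $(\overline{G}, \overline{H})$. Since $\overline{\overline{G}} = G$ and $\overline{\overline{H}} = H$, this yields $\overline{G} \wprod \overline{H} = \overline{G} \otimes \overline{H} \cup G \otimes H$. Because the union of graphs is commutative, the right-hand sides of the two expressions coincide edge-for-edge, so $G \wprod H = \overline{G} \wprod \overline{H}$ as graphs, with identical vertex sets and identical edge sets.

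One can equivalently verify this at the level of the edge conditions: a pair $\{(x,y),(x',y')\}$ is an edge of $\overline{G} \wprod \overline{H}$ exactly when either $\{x,x'\} \in E(\overline{G})$ and $\{y,y'\} \in E(\overline{H})$, or $\{x,x'\} \in E(\overline{\overline{G}})$ and $\{y,y'\} \in E(\overline{\overline{H}})$; using $E(\overline{\overline{G}}) = E(G)$ and $E(\overline{\overline{H}}) = E(H)$ and reordering the two clauses recovers precisely the conditions defining the edges of $G \wprod H$.

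Finally, since the two products are literally the same graph, they share every induced subgraph and hence the same clique and chromatic numbers throughout, so $G \wprod H$ is perfect if and only if $\overline{G} \wprod \overline{H}$ is. I expect no genuine obstacle here: the only thing to be careful about is correctly tracking the double complementation so that the two unions match, which the commutativity of $\cup$ handles immediately.
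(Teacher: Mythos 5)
Your proposal is correct and matches the paper's intent: the paper states this lemma "follows directly from definitions," and the direct verification is exactly your observation that $G \wprod H$ and $\overline{G} \wprod \overline{H}$ have identical edge sets (via Lemma~\ref{lem:GprodHunion}, commutativity of union, and $\overline{\overline{G}} = G$), so the two products are the same graph and perfection of one is trivially equivalent to perfection of the other.
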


A \emph{hole} is an induced cycle. An \emph{antihole} is an induced co-cycle. An \emph{odd} (anti)hole has an odd number of vertices.

\begin{prop}[Strong Perfect Graph Theorem~{\cite{Chudnovsky2006}}]\label{prop:strongperfthm}
    A graph $G$ is perfect if and only if it has no odd holes or odd antiholes.
\end{prop}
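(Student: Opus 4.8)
The plan is to recognise this as the celebrated Strong Perfect Graph Theorem, whose two directions are of completely different character, and to treat them separately. The ``only if'' direction I would establish directly and elementarily. Every odd hole $C_{2k+1}$ with $k \geq 2$ satisfies $\omega(C_{2k+1}) = 2$ while $\chi(C_{2k+1}) = 3$, and every odd antihole $\overline{C_{2k+1}}$ satisfies $\omega(\overline{C_{2k+1}}) = k$ while $\chi(\overline{C_{2k+1}}) = k+1$, the latter because the independent sets of $\overline{C_{2k+1}}$ are exactly the cliques (edges and vertices) of $C_{2k+1}$, of which $k+1$ are needed to cover the $2k+1$ vertices and $k$ do not suffice. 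Since perfection is inherited by induced subgraphs, a graph containing an induced odd hole or odd antihole is therefore imperfect; taking the contrapositive gives that a perfect graph contains neither. I expect no real obstacle here, only mild care with the chromatic computation for antiholes.

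The converse — that a \emph{Berge graph} (a graph with no odd hole and no odd antihole) is perfect — is the genuinely deep statement, and here I would follow the decomposition strategy of Chudnovsky, Robertson, Seymour and Thomas~\cite{Chudnovsky2006}. The skeleton is as follows. First I would suppose, for contradiction, that a minimal imperfect Berge graph $G$ exists, and record the strong structural constraints known for minimal imperfect graphs: partitionability, the identity $\abs{V(G)} = \alpha(G)\,\omega(G) + 1$, the fact that every vertex lies in exactly $\omega(G)$ maximum cliques and $\alpha(G)$ maximum independent sets, and closure of the minimal-counterexample hypothesis under complementation via Lov\'{a}sz's weak perfect graph theorem~\cite{Lovasz1979}. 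Next I would invoke the structure theorem for Berge graphs, which asserts that every Berge graph is either \emph{basic} — bipartite, the complement of a bipartite graph, the line graph of a bipartite graph, the complement of such a line graph, or a double split graph — or else admits one of the distinguished decompositions (a $2$-join, a complement $2$-join, or a balanced skew partition). I would then verify that each basic class is perfect, which is classical, and finally show that no minimal imperfect graph can admit any of the listed decompositions. Combining these last two facts contradicts the structure theorem, forcing $G$ to be perfect.

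The hard part will be the middle steps: the structure theorem itself and the proof that a minimal imperfect Berge graph possesses none of the three decomposition types. This is precisely where the entire difficulty of the result is concentrated, and a faithful argument runs to well over a hundred pages of case analysis. For the purposes of the present paper I would therefore not attempt to reproduce it, but cite \cite{Chudnovsky2006} directly, as is standard. The payoff for this work lies not in reproving the theorem but in deploying the clean equivalence ``perfect $\iff$ no odd hole and no odd antihole'' as a black box: it converts the otherwise awkward chromatic-versus-clique condition into a purely local forbidden-induced-subgraph criterion, which is exactly the form needed when dissecting $G \wprod H$ via Lemma~\ref{lem:GprodHunion} and the complement symmetry of Lemma~\ref{lem:wmp_comp_perf}.
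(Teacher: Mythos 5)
Your proposal is correct and takes essentially the same approach as the paper: the paper states this result without proof, importing it wholesale from Chudnovsky, Robertson, Seymour and Thomas~\cite{Chudnovsky2006}, which is exactly what you conclude by doing after your (accurate) sketch of the easy direction and of the decomposition strategy. Nothing further is required.
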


\begin{cor}\label{cor:SPGT_cor}
    A graph $G$ is perfect if and only if both $G$ and $\overline{G}$ have no odd holes.
\end{cor}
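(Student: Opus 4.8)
The plan is to deduce the corollary directly from the Strong Perfect Graph Theorem (Proposition~\ref{prop:strongperfthm}) by translating the ``odd antihole'' condition on $G$ into an ``odd hole'' condition on $\overline{G}$. Since Proposition~\ref{prop:strongperfthm} already tells us that $G$ is perfect if and only if $G$ contains neither an odd hole nor an odd antihole, the entire content of the corollary reduces to a single observation: the odd antiholes of $G$ are exactly the odd holes of $\overline{G}$.

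To establish this correspondence I would first record the elementary identity $\overline{G[U]} = \overline{G}[U]$ for every vertex subset $U \subseteq V(G)$. This holds because in both constructions two distinct vertices of $U$ are declared adjacent precisely when they fail to be adjacent in $G$; that is, complementing and then restricting to $U$ yields the same graph as restricting and then complementing. An odd antihole of $G$ is by definition an induced subgraph $G[U] \cong \overline{C_k}$ for some odd $k$, and applying the identity shows this is equivalent to $\overline{G}[U] \cong C_k$, i.e. to $U$ inducing an odd hole in $\overline{G}$. Hence $G$ has an odd antihole if and only if $\overline{G}$ has an odd hole.

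With this in hand both directions are immediate. If $G$ is perfect, then by Proposition~\ref{prop:strongperfthm} it has no odd hole (the first required condition) and no odd antihole, the latter being equivalent, via the correspondence, to $\overline{G}$ having no odd hole (the second required condition). Conversely, if neither $G$ nor $\overline{G}$ has an odd hole, then $G$ has no odd hole and, by the correspondence, no odd antihole, so Proposition~\ref{prop:strongperfthm} certifies that $G$ is perfect.

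There is no serious obstacle here; the only point demanding care is the length convention for a hole. For the Strong Perfect Graph Theorem to be meaningful a genuine odd hole must have order at least $5$ (a triangle is not a hole, and its complement $E_3$ is not even a cycle), so I would make sure the parity bookkeeping only ever invokes cycles $C_k$ with odd $k \geq 5$. This guarantees that the identity map on vertex sets really does carry odd antiholes of $G$ onto odd holes of $\overline{G}$ and back, which is all the argument requires.
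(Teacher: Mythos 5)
Your proof is correct and is exactly the argument the paper leaves implicit: the corollary is stated without proof as an immediate consequence of Proposition~\ref{prop:strongperfthm}, using precisely your observation that odd antiholes of $G$ are the odd holes of $\overline{G}$ (via $\overline{G[U]} = \overline{G}[U]$). Your remark about requiring odd holes to have order at least $5$ is a worthwhile clarification, since the paper's definition of a hole does not state this explicitly.
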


The sequel follows as a corollary of the strong perfect graph theorem, but was originally proved by Lov\'asz in 1972~\cite{Lovasz1972a}.

\begin{prop}[Weak Perfect Graph Theorem]\label{prop:weakperfthm}
    A graph $G$ is perfect if and only if $\overline{G}$ is perfect.
\end{prop}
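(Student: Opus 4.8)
The plan is to obtain this as an immediate consequence of the Strong Perfect Graph Theorem (Proposition~\ref{prop:strongperfthm}); in fact Corollary~\ref{cor:SPGT_cor} makes the underlying symmetry transparent. The one structural fact I would pin down first is that complementation is an involution that commutes with passing to induced subgraphs: for every $U \subseteq V(G)$ we have $\overline{G}[U] = \overline{G[U]}$, and $\overline{\overline{G}} = G$. Both are immediate from the definitions, since whether two distinct vertices of $U$ are adjacent in $\overline{G}[U]$ depends only on their (non-)adjacency in $G$.

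From this I would deduce that complementation interchanges odd holes and odd antiholes. An odd hole of $G$ is an induced subgraph isomorphic to an odd cycle $C_{2k+1}$ on some vertex set $U$; by the commutation property, $\overline{G}[U] = \overline{C_{2k+1}}$, which is by definition an odd antihole of $\overline{G}$. Running the argument in both directions and using $\overline{\overline{G}} = G$ shows that $G$ has an odd hole if and only if $\overline{G}$ has an odd antihole, and symmetrically that $G$ has an odd antihole if and only if $\overline{G}$ has an odd hole.

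Finally I would invoke Corollary~\ref{cor:SPGT_cor}: a graph is perfect if and only if neither it nor its complement contains an odd hole. The condition ``neither $G$ nor $\overline{G}$ has an odd hole'' is manifestly unchanged when the roles of $G$ and $\overline{G}$ are swapped, precisely because $\overline{\overline{G}} = G$. Hence the identical criterion characterises perfection of $G$ and of $\overline{G}$, so the two are equivalent. I anticipate no genuine obstacle here: the entire content is carried by the Strong Perfect Graph Theorem, and what remains is the routine bookkeeping that complementation preserves the induced-subgraph relation while exchanging holes and antiholes. One could equally bypass the corollary and argue directly from Proposition~\ref{prop:strongperfthm}, since the forbidden-configuration list ``no odd hole and no odd antihole'' is itself invariant under complementation.
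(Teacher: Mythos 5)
Your proof is correct and takes essentially the same route as the paper: the paper offers no separate argument for this proposition, stating only that it ``follows as a corollary of the strong perfect graph theorem'' (citing Lov\'asz's 1972 paper for the original, SPGT-free proof), which is precisely what you carry out. The details you supply --- that complementation commutes with taking induced subgraphs, hence exchanges odd holes and odd antiholes, so the forbidden-configuration criterion of Proposition~\ref{prop:strongperfthm} (equivalently Corollary~\ref{cor:SPGT_cor}) is invariant under $G \mapsto \overline{G}$ --- are exactly the bookkeeping the paper leaves implicit.
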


\begin{prop}[Cameron, Edmonds and Lov\'{a}sz{~\cite[Theorem $1'\,$]{Cameron1986}}]\label{prop:Cameron}
    Let $G_1$ and $G_2$ be perfect graphs and $G:=G_1\cup G_2$ be their union with $V(G_1) = V(G_2) = V(G)$ .
    Suppose that for any $x, x', x'' \in V(G)$, $\{ x,x' \}\in E(G_1)$ and $\{ x', x'' \} \in E(G_2)$ implies that $\{ x, x'' \}\in E(G)$.
    Then, $G$ is perfect.
\end{prop}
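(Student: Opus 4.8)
The plan is to invoke Corollary~\ref{cor:SPGT_cor} and reduce everything to showing that neither $G$ nor $\overline{G}$ contains an odd hole. I will use repeatedly that $E(G)=E(G_1)\cup E(G_2)$, so that a pair is a non-edge of $G$ exactly when it is a non-edge of both $G_1$ and $G_2$, and dually $E(\overline{G})=E(\overline{G_1})\cap E(\overline{G_2})$. Since $G_1$ and $G_2$ are perfect, Proposition~\ref{prop:weakperfthm} makes $\overline{G_1}$ and $\overline{G_2}$ perfect as well, so none of the four graphs $G_1,G_2,\overline{G_1},\overline{G_2}$ has an odd hole; the entire proof consists of transferring this fact to $G$ and to $\overline{G}$ by means of the composition hypothesis.

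First I would exclude an odd hole in $G$. Let $C=v_0v_1\cdots v_{2k}v_0$ be an induced odd cycle of $G$ with $2k+1\geq 5$, and for each edge record whether it lies in $E(G_1)$ or $E(G_2)$ (an edge may lie in both). If $C$ is contained in neither $E(G_1)$ nor $E(G_2)$, then the indicator of membership in $E(G_1)$ is non-constant around the cycle, so some two cyclically consecutive edges $e=\{a,b\}$ and $f=\{b,c\}$ satisfy $e\in E(G_1)$ and $f\in E(G_2)\setminus E(G_1)$; applying the composition hypothesis with $x=a$, $x'=b$, $x''=c$ forces $\{a,c\}\in E(G)$, a distance-two chord of a cycle of length $\geq 5$, contradicting that $C$ is induced. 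Hence all edges of $C$ lie in a single $E(G_i)$; since $G_i\subseteq G$ the cycle $C$ then has no chord in $G_i$ either, so it is an induced odd cycle of the perfect graph $G_i$, which is impossible.

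The hard part will be the dual statement, that $\overline{G}$ has no odd hole, because the composition hypothesis is not symmetric under complementation and so the above argument does not transcribe. Here an induced odd cycle $C=u_0u_1\cdots u_{2k}u_0$ of $\overline{G}$ has cycle edges that are non-edges of $G$ (hence of both $G_1$ and $G_2$), whereas its chords are the $G$-edges, lying in $E(G_1)\cup E(G_2)$; the composition hypothesis now constrains the chords rather than the cycle edges. The key step is a propagation argument: fix $u_j$, whose chords reach every $u_m$ with $m\neq j-1,j,j+1$, and take two cycle-consecutive such vertices $u_m,u_{m+1}$, so that $\{u_m,u_{m+1}\}$ is a non-edge of $G$ with common neighbour $u_j$. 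The contrapositive of the composition hypothesis applied at this non-edge shows that if $\{u_j,u_m\}\in E(G_2)\setminus E(G_1)$ then $\{u_j,u_{m+1}\}\in E(G_2)\setminus E(G_1)$, and conversely; thus membership in $E(G_2)\setminus E(G_1)$ is constant along the whole arc of chords incident to $u_j$.

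Since the chords of $C$ form $\overline{C_{2k+1}}$, which is connected for $2k+1\geq 5$, this per-vertex dichotomy propagates globally to the alternative: either every chord of $C$ lies in $E(G_1)$, or every chord lies in $E(G_2)\setminus E(G_1)\subseteq E(G_2)$. In the first case the chords are non-edges of $\overline{G_1}$ while the cycle edges are edges of $\overline{G_1}$, so $C$ is an induced odd cycle of $\overline{G_1}$, contradicting its perfection; the second case makes $C$ an induced odd cycle of $\overline{G_2}$, again a contradiction. Having excluded odd holes in both $G$ and $\overline{G}$, Corollary~\ref{cor:SPGT_cor} delivers that $G$ is perfect. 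The only points demanding care are the bookkeeping for chords lying in $E(G_1)\cap E(G_2)$ and the verification that the propagation sweeps the entire arc of chords at each vertex, both of which come down to the index computations indicated above.
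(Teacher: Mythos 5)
Your proof is correct, but note that the paper itself does not prove Proposition~\ref{prop:Cameron} at all: it is imported as a black box from Cameron, Edmonds and Lov\'{a}sz~\cite{Cameron1986}, so your argument is a genuine alternative rather than a reconstruction. Both halves of your argument check out. For an odd hole of $G$, either all cycle edges lie in one $E(G_i)$, giving an odd hole in a perfect graph, or a switch of the $E(G_1)$-indicator at some vertex lets the composition hypothesis manufacture a distance-two chord. For an odd hole $C$ of $\overline{G}$, your propagation step is sound: for a cycle edge $\{u_m,u_{m+1}\}$ (a non-edge of $G$) and a common chord-neighbour $u_j$, the contrapositive of the hypothesis (applied with $x'=u_j$ in both orders) shows the chords $\{u_j,u_m\}$ and $\{u_j,u_{m+1}\}$ are either both in $E(G_1)$ or both in $E(G_2)\setminus E(G_1)$, this sweeps the whole arc of chords at $u_j$, and connectivity of $\overline{C_{2k+1}}$ for $2k+1\geq 5$ then makes the type global, so $C$ survives as an induced odd cycle of $\overline{G_1}$ or $\overline{G_2}$, contradicting Proposition~\ref{prop:weakperfthm}. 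The trade-off is worth naming: your route leans on the Strong Perfect Graph Theorem via Corollary~\ref{cor:SPGT_cor}, a result vastly deeper than the proposition being proved and unavailable in 1986, so the original Cameron--Edmonds--Lov\'{a}sz proof necessarily proceeded by different, more elementary means (pre-SPGT characterisations of perfection); what your version buys is a short, self-contained derivation entirely within the toolkit this paper already assumes, which is legitimate in context even if it is logically extravagant as a proof of the 1986 result.
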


\begin{prop}[Ravindra, Parthasarathy{~\cite[Theorem 3.2]{Ravindra1977}}]\label{prop:Ravindra}
    The graph $G_1 \otimes G_2$ is perfect if and only if either
    \begin{enumerate}
        \item $G_1$ or $G_2$ is bipartite, or
        \item both $G_1$ and $G_2$ are $(\text{odd hole, paw})$-free.
    \end{enumerate}
\end{prop}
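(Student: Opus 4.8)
The plan is to characterise perfection through the Strong Perfect Graph Theorem in the form of Corollary~\ref{cor:SPGT_cor}: $G_1 \otimes G_2$ is perfect if and only if neither $G_1 \otimes G_2$ nor its complement contains an odd hole. I would treat the two implications separately, proving sufficiency constructively and necessity by contraposition.

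For sufficiency I would first dispose of the bipartite case. If, say, $G_1$ is bipartite with partite sets $V_0, V_1$, then colouring each vertex $(x,y)$ of $G_1 \otimes G_2$ by the partite set containing $x$ is a proper $2$-colouring, since every edge of the tensor product forces $x \sim_{G_1} x'$ and hence places $x, x'$ in different partite sets; thus $G_1 \otimes G_2$ is bipartite, and so perfect. For the case where both factors are $(\text{odd hole}, \text{paw})$-free, I would invoke Lemma~\ref{lem:pawfree_perfect} to write each factor as a disjoint union of components that are bipartite or complete multipartite. Because the tensor product distributes over disjoint unions and a graph is perfect exactly when each of its connected components is, this reduces the problem to showing that the tensor product of two components is perfect; whenever one of them is bipartite the previous paragraph applies, so the only real work is the product of two complete multipartite graphs.

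For this core case I would verify the two conditions of Corollary~\ref{cor:SPGT_cor} directly. Were there an odd hole $C_{2k+1}$ with $k \geq 2$ inside $G_1 \otimes G_2$ on vertices $(x_i,y_i)$, then consecutive indices would lie in different partite sets of both factors, while each of the $(2k+1)(k-1)$ non-edges of the hole would force $x_i, x_j$ into a common partite set of $G_1$ or $y_i, y_j$ into a common partite set of $G_2$. Since lying in a common partite set is an equivalence relation whose classes are independent sets of $C_{2k+1}$, each factor can account for at most $k(k-1)$ of these non-edges (two classes of size $k$ and one singleton maximise $\sum_i \binom{s_i}{2}$), so the two factors together cover at most $2k(k-1) < (2k+1)(k-1)$ of them, a contradiction. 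For the complement, two distinct vertices are adjacent precisely when their first coordinates are non-adjacent in $G_1$ or their second coordinates are non-adjacent in $G_2$, and for complete multipartite factors each such relation is again the equivalence relation ``share a partite set'' (the partite sets are the cliques of the complement by Lemma~\ref{lem:disjoint_compl_multi}). Along a hypothetical odd hole I would label every edge $A$ or $B$ according to which coordinate witnesses the adjacency; transitivity of these equivalence relations forbids two cyclically consecutive edges from carrying the same label, which is a proper $2$-colouring of an odd cycle and hence impossible. Together these two arguments give perfection.

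For necessity I would argue the contrapositive: assuming neither factor is bipartite and that the pair is not jointly $(\text{odd hole}, \text{paw})$-free, I would exhibit an odd hole or odd antihole in $G_1 \otimes G_2$. Each factor then contains an induced odd cycle --- a triangle, or, by Lemma~\ref{lem:triangle_free_odd_hole}, an induced $C_{2l+1}$ with $l \geq 2$ in the triangle-free case --- and at least one factor additionally contains an induced odd hole or an induced paw. Since the tensor product of induced subgraphs is an induced subgraph of the tensor product, it suffices to check a short list of small products, namely $C_{2k+1} \otimes C_{2l+1}$ and the product of a paw with $C_{2l+1}$, and verify that these contain an induced odd hole or antihole. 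I expect this necessity direction to be the main obstacle: the products of odd cycles must be analysed for the precise parities at which an induced odd hole appears, and the paw case is genuinely necessary because $K_3 \otimes K_3$ is itself perfect, so it is the pendant vertex of the paw --- not merely its triangle --- that creates the imperfection. The clean, self-contained part of the argument is the complement colouring above; the case-by-case gadget constructions for necessity are where the care is required.
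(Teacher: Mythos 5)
First, a point of reference: the paper does not prove this proposition at all --- it is imported verbatim as a theorem of Ravindra and Parthasarathy --- so your attempt cannot be compared to an internal proof, only judged on its own. Your sufficiency direction is complete and correct, and is genuinely nice. The bipartite case is standard; the reduction via Lemma~\ref{lem:pawfree_perfect} and distributivity of $\otimes$ over disjoint unions is sound (odd holes and antiholes are connected, so it suffices that each summand $A_i \otimes B_j$ be perfect); and both halves of the complete-multipartite core case check out. In the counting half, each class of the ``common partite set'' relation is indeed an independent set of positions on the hole, hence of size at most $k$, so each factor covers at most $2\binom{k}{2} = k(k-1)$ of the $(2k+1)(k-1)$ non-edges, and $2k(k-1) < (2k+1)(k-1)$ for $k \geq 2$. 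In the complement half, the transitivity/alternation argument correctly forces the two edge-labels to alternate around an odd cycle, which is impossible.

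The genuine gap is the necessity direction, which you leave as an explicitly unexecuted plan. The reduction is right --- tensor products of induced subgraphs are induced in the tensor product, so it suffices to treat $C_a \otimes C_b$ (odd $a \geq 5$, odd $b \geq 3$) and $\text{paw} \otimes C_b$ --- but all of the ``only if'' content lives precisely in those verifications, and none is carried out. For what it is worth, they close more cleanly than the parity-by-parity analysis you anticipate. For $C_a \otimes C_b$: the two projections are homomorphisms, and an odd closed walk in $C_a$ has length at least $a$, so every odd cycle in the product has length at least $\max(a,b) \geq 5$; the product is non-bipartite (the diagonal walk $(i \bmod a,\, i \bmod b)$, $i = 0,\dots,ab$, is closed of odd length $ab$); and a shortest odd cycle is always chordless by the argument of Lemma~\ref{lem:triangle_free_odd_hole}. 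Hence the product contains an odd hole and is imperfect by Proposition~\ref{prop:strongperfthm}. Restricting the paw to its triangle, the same argument handles $\text{paw} \otimes C_b$ for $b \geq 5$, leaving only the single gadget $\text{paw} \otimes K_3$: with paw vertices $1,2,3$ forming the triangle and pendant vertex $4$ attached to $3$, and $K_3$ vertices $a,b,c$, the five vertices $(4,a),(3,b),(2,c),(1,b),(3,c)$ induce a $C_5$. With these observations supplied, your outline becomes a complete and self-contained proof of the cited result.
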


\begin{cor}\label{cor:GprodKn}
    The graph $G \wprod K_n$ is perfect if and only if either: $n=1$, $n=2$, or $G$ is $(\text{odd hole, paw})$-free. 
\end{cor}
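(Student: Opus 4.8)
The plan is to reduce $G \wprod K_n$ to an ordinary tensor product and then invoke Proposition~\ref{prop:Ravindra}. First I would observe that $\overline{K_n} = E_n$ is edgeless, so by the definition of the tensor product the term $\overline{G} \otimes \overline{K_n} = \overline{G} \otimes E_n$ contains no edges at all (an edge of a tensor product requires an edge in each factor). Invoking Lemma~\ref{lem:GprodHunion}, the product $G \wprod K_n = G \otimes K_n \cup \overline{G}\otimes E_n$ therefore has exactly the same vertex set and edge set as $G \otimes K_n$. Consequently $G \wprod K_n$ is perfect if and only if $G \otimes K_n$ is perfect, and the question is reduced to characterising perfectness of $G \otimes K_n$.

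Next I would apply Proposition~\ref{prop:Ravindra} with $G_1 = G$ and $G_2 = K_n$: the graph $G \otimes K_n$ is perfect if and only if $G$ or $K_n$ is bipartite, or both $G$ and $K_n$ are $(\text{odd hole, paw})$-free. I would then specialise each clause to the case $G_2 = K_n$. The complete graph $K_n$ is bipartite precisely when $n \in \{1,2\}$, since $K_3$ is already an odd cycle. Moreover every induced subgraph of $K_n$ is itself complete, so $K_n$ admits no induced cycle of length $\geq 5$ (hence no odd hole) and no induced paw (which has a non-adjacent pair); thus $K_n$ is $(\text{odd hole, paw})$-free for every $n$.

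Finally I would collapse the resulting disjunction. The clause ``$K_n$ bipartite'' becomes ``$n \in \{1,2\}$'', and since $K_n$ is always $(\text{odd hole, paw})$-free, the clause ``both factors $(\text{odd hole, paw})$-free'' becomes simply ``$G$ is $(\text{odd hole, paw})$-free''. It remains to absorb the clause ``$G$ is bipartite'': a bipartite graph has no odd cycles, hence no odd holes, and it is paw-free because the paw contains a triangle, so ``$G$ bipartite'' is already implied by ``$G$ is $(\text{odd hole, paw})$-free'' and contributes nothing new. Combining these, $G \otimes K_n$ --- and therefore $G \wprod K_n$ --- is perfect if and only if $n = 1$, $n = 2$, or $G$ is $(\text{odd hole, paw})$-free, as claimed.

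There is no substantive obstacle; the result is essentially a specialisation of Proposition~\ref{prop:Ravindra}. The only points demanding care are the bookkeeping in the first step (verifying that discarding the edgeless term $\overline{G}\otimes E_n$ genuinely leaves the graph, and hence its perfectness, unchanged) and the two elementary facts that $K_n$ is bipartite exactly for $n \leq 2$ and is $(\text{odd hole, paw})$-free for all $n$.
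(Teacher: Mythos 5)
Your proposal is correct and follows the same route as the paper's own proof: observe that $G \wprod K_n = G \otimes K_n$ since $\overline{K_n}$ is edgeless, then specialise Proposition~\ref{prop:Ravindra}, using that $K_n$ is bipartite exactly for $n \leq 2$ and $(\text{odd hole, paw})$-free for all $n$. The paper states this very tersely; your additional step of absorbing the clause ``$G$ bipartite'' into ``$G$ is $(\text{odd hole, paw})$-free'' is exactly the bookkeeping the paper leaves implicit.
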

\begin{proof}
    From definitions, $G \wprod K_n = G \otimes K_n$.
    For $n=1$ and $n=2$, $K_n$ is bipartite. For $n \geq 3$, observe that $K_n$ is $(\text{odd hole, paw})$-free.
\end{proof}

\subsubsection{Imperfect weak modular products}

For the proof of Theorem~\ref{thm:main_intro}, we will need to enumerate many pairs of graphs whose weak modular product is not perfect.
The main proof technique used is to find an offending odd hole or antihole in a given product graph. By the strong perfect graph theorem (Proposition~\ref{prop:strongperfthm}), the product is thus not perfect.
The following observation drastically reduces the work required. 

\begin{obs}
    Suppose $X$ is an induced subgraph of $G$ and $Y$ is an induced subgraph of $H$. Then, if $X \wprod Y$ is not perfect, $G \wprod H$ is not perfect.
    Conversely, if $G \wprod H$ is perfect, then $X \wprod Y$ is perfect.
\end{obs}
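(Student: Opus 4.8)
The plan is to reduce everything to the single structural fact that $X \wprod Y$ is an \emph{induced} subgraph of $G \wprod H$, after which the result is immediate from the hereditary nature of perfection. Note first that the two assertions in the statement are contrapositives of one another, so it suffices to prove: if $G \wprod H$ is perfect, then $X \wprod Y$ is perfect.

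First I would set $U := V(X) \subseteq V(G)$ and $W := V(Y) \subseteq V(H)$ and claim that $X \wprod Y$ equals the subgraph of $G \wprod H$ induced on $U \times W$. At the level of vertices this is clear, since $V(X \wprod Y) = U \times W \subseteq V(G) \times V(H) = V(G \wprod H)$. For the edges I would appeal to the fact that complementation commutes with passing to induced subgraphs: for $x, x' \in U$ with $x \neq x'$, one has $\{x,x'\} \in E(X)$ if and only if $\{x,x'\} \in E(G)$, and likewise $\{x,x'\} \in E(\overline{X})$ if and only if $\{x,x'\} \in E(\overline{G})$, because $X = G[U]$ is induced; the analogous equivalences hold for $Y$ and $H$ on $W$. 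Substituting these equivalences into the two defining clauses for an edge of the weak modular product shows that, for $(x,y),(x',y') \in U \times W$, the pair $\{(x,y),(x',y')\}$ is an edge of $X \wprod Y$ exactly when it is an edge of $G \wprod H$. Hence the edge set of $X \wprod Y$ coincides with that of the induced subgraph $(G \wprod H)[U \times W]$, proving the claim.

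Finally, I would invoke the fact that any induced subgraph of a perfect graph is perfect, which follows directly from the definition of perfection: the induced subgraphs of $(G \wprod H)[U \times W]$ are themselves induced subgraphs of $G \wprod H$, and each therefore satisfies $\chi = \omega$ by hypothesis. Applying this to the induced subgraph $X \wprod Y$ established above yields that $X \wprod Y$ is perfect, and the other direction is simply its contrapositive.

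As for difficulty, there is no genuine obstacle here: the only point requiring care is the verification that the weak modular product of induced subgraphs is itself the corresponding induced subgraph of the product, and within that the one nontrivial ingredient is the observation that $\overline{G[U]} = \overline{G}[U]$, i.e.\ that complementation and induced-subgraph extraction commute. Everything else is bookkeeping over the two clauses in the definition of $\wprod$, together with the elementary and well-known heredity of perfection under induced subgraphs.
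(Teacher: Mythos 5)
Your proof is correct and is exactly the reasoning the paper relies on: the paper states this as an observation without proof, and the intended justification is precisely your two ingredients, namely that $X \wprod Y = (G \wprod H)[V(X) \times V(Y)]$ (which uses $\overline{G[U]} = \overline{G}[U]$ to handle the non-adjacency clause of the product) together with the heredity of perfection under taking induced subgraphs. There is nothing to add; your write-up simply makes the implicit argument explicit.
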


Thus if we have graph families $\Gamma_X$ and $\Gamma_Y$ such that every $G \in \Gamma_X$ contains an induced $X$ and every $H \in \Gamma_Y$ contains an induced $Y$, where $X \wprod Y$ is not perfect, every pair in $\Gamma_X \times \Gamma_Y$ has an imperfect weak modular product. 

For the upcoming results, we require the notion of an \emph{augment} of a graph.
\begin{defn}[Augment of a graph]
    Let $G$ be a graph on $n$ vertices. A graph $G'$ is an \emph{augment} of $G$ if $\abs{V(G')} = n+1$ and $G$ is an induced subgraph of $G'$.
\end{defn}

\begin{lem}\label{lem:C5trifree_augments_C5_P3}
    Let $G$ be a triangle-free augment of $C_5$. Then, $G \wprod P_3$ is not perfect.
\end{lem}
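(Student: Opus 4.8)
My plan is to invoke the Strong Perfect Graph Theorem (Proposition~\ref{prop:strongperfthm}) by exhibiting an induced odd hole inside $G \wprod P_3$; concretely, an induced $C_5$. The construction must use the extra vertex of the augment, since $C_5 \wprod P_3$ is itself perfect (it appears in case 3 of Theorem~\ref{thm:main_intro}), so it is the single additional vertex that destroys perfection. First I would pin down the structure of $G$. Writing the underlying pentagon as $1 \sim 2 \sim 3 \sim 4 \sim 5 \sim 1$ and calling the sixth vertex $v$, triangle-freeness forces the neighbourhood of $v$ inside the pentagon to be an independent set of $C_5$; since $\alpha(C_5)=2$, we have $\abs{N(v)} \le 2$, and because the dihedral automorphism group of $C_5$ carries every independent set of size at most $2$ into $\{1,3\}$, I may assume after relabelling (an isomorphism of $G$, hence of $G\wprod P_3$) that $v \not\sim 2$, $v \not\sim 4$ and $v \not\sim 5$.

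Next I would record the adjacency rules of the product. Labelling $P_3$ as $a-b-c$, the definition of $\wprod$ gives that $(x,a)$ and $(y,b)$ (likewise $(x,b)$ and $(y,c)$) are adjacent exactly when $x \sim_G y$; that $(x,a)$ and $(y,c)$ are adjacent exactly when $x \not\sim_G y$ and $x \neq y$; and that any two vertices sharing a layer, or sharing a first coordinate, are non-adjacent. The heart of the argument is then one explicit five-vertex set, taken in the cyclic order
\[
(v,a)\;-\;(4,c)\;-\;(5,b)\;-\;(4,a)\;-\;(2,c)\;-\;(v,a).
\]
I would check that the five consecutive pairs are edges: $(v,a)(4,c)$ and $(2,c)(v,a)$ from $v\not\sim 4$ and $v\not\sim 2$; $(4,c)(5,b)$ and $(5,b)(4,a)$ from $4\sim 5$; and $(4,a)(2,c)$ from $4\not\sim 2$. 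The five remaining pairs are non-edges: $(v,a)(5,b)$ and $(5,b)(2,c)$ because $v\not\sim 5$ and $5\not\sim 2$; $(4,c)(4,a)$ because they share a first coordinate; and $(v,a)(4,a)$, $(4,c)(2,c)$ because they share a layer. Hence these five vertices induce a $C_5$, and by Proposition~\ref{prop:strongperfthm} the graph $G\wprod P_3$ is imperfect.

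The pleasant feature I would emphasise is that this single witness handles \emph{all} triangle-free augments at once, with no residual case split on $\abs{N(v)}$: the hole uses only the vertices $2,4,5,v$ and only the facts $4\sim 5$, $4\not\sim 2$, $5\not\sim 2$ of $C_5$ together with $v$'s non-adjacency to $2,4,5$, none of which depend on whether $v$ is actually adjacent to $1$, to $3$, or to neither. The only real obstacle, therefore, is the normalisation step: one must justify carefully, from $\alpha(C_5)=2$ and triangle-freeness, that every triangle-free augment of $C_5$ can be relabelled so that $N(v)\subseteq\{1,3\}$. Once that reduction is in place, verifying the ten adjacency and non-adjacency conditions above is routine.
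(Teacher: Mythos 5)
Your proof is correct and takes essentially the same approach as the paper---exhibiting an induced odd hole and invoking the Strong Perfect Graph Theorem (the paper does this per-augment via figures, while your single witness $(v,a),(4,c),(5,b),(4,a),(2,c)$ handles all three triangle-free augments uniformly). The normalisation $N(v)\subseteq\{1,3\}$ via triangle-freeness and $\alpha(C_5)=2$, and the ten adjacency/non-adjacency checks, are all valid.
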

\begin{proof}
    Figure~\ref{fig:trifree_augments_C5_P3} shows the relevant graph products with induced odd holes and antiholes. The lemma follows from the strong perfect graph theorem.
    \begin{figure}[H]
        \centering
        \includegraphics[width=0.75\textwidth]{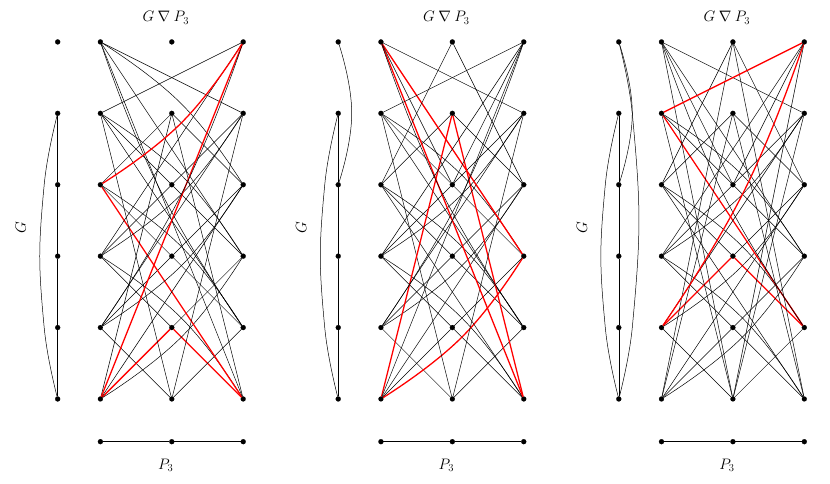}
        \caption{Weak modular product of $G$ and $P_3$, where $G$ is a triangle-free augment of $C_5$. An induced odd hole/antihole is denoted by a red, thick line.}
        \label{fig:trifree_augments_C5_P3}
    \end{figure}
\end{proof}

\begin{lem}\label{lem:C5trifree_augments_C5_K2uE1}
    Let $G$ be a triangle-free augment of $C_5$. Then, $G \wprod (K_2 \uplus E_1)$ is not perfect.
\end{lem}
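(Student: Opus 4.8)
The plan is to mirror the proof of Lemma~\ref{lem:C5trifree_augments_C5_P3}: reduce to finitely many cases and, in each, exhibit an induced odd hole (or odd antihole) in the product, so that imperfection follows from the Strong Perfect Graph Theorem (Proposition~\ref{prop:strongperfthm}). First I would classify the triangle-free augments of $C_5$ up to isomorphism. Writing $C_5$ as $1\,2\,3\,4\,5$, any new vertex $v$ must attach to an independent set of $C_5$, for otherwise it closes a triangle with an edge of the cycle; since $\alpha(C_5)=2$ and the dihedral symmetry of $C_5$ identifies all singletons and all non-adjacent pairs, there are exactly three such augments: $C_5\uplus E_1$; $C_5$ with a single pendant vertex; and $C_5$ together with a vertex joined to two vertices at distance two. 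It suffices to treat these three graphs.

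Next I would fix convenient coordinates on the product. Writing $K_2\uplus E_1$ on $\{a,b,c\}$ with $a\sim b$ and $c$ isolated, Lemma~\ref{lem:GprodHunion} gives $G\wprod(K_2\uplus E_1)=G\otimes(K_2\uplus E_1)\cup\overline{G}\otimes P_3$, which organises the $18$ vertices into three ``layers'' indexed by $a,b,c$: there are no edges inside a layer, the $a$--$b$ edges follow the adjacencies of $G$, and the $a$--$c$ and $b$--$c$ edges follow the adjacencies of $\overline{G}$. I would then search for an induced $C_5$ in each of the three products. For the factor $C_5\uplus E_1$, writing the isolated vertex as $6$, one such hole is
\[
(6,a)-(1,c)-(4,a)-(5,b)-(3,c)-(6,a);
\]
a direct check of the five cycle edges and the five potential chords shows that these vertices induce a $C_5$, so the product is imperfect. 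Entirely analogous five-vertex holes should handle the pendant and distance-two augments. In each case the offending hole uses the augmenting vertex; this is forced, since the sub-product carried by the $C_5$ part alone is perfect (case~3 of Theorem~\ref{thm:main_intro}), so no obstruction can live there.

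The main obstacle is the chord-checking in each case rather than any conceptual difficulty, and the natural presentation is a figure displaying the three products as in Figure~\ref{fig:trifree_augments_C5_P3}. The complication to watch is that when the augmenting vertex is isolated in $G$ it becomes universal in $\overline{G}$, so its copy in the $c$-layer acquires many neighbours; it is then easy to select a five-cycle that carries a chord, and the holes must be chosen precisely to avoid those edges. Finally, it is worth recording that, via $\overline{K_2\uplus E_1}=P_3$ and Lemma~\ref{lem:wmp_comp_perf}, the statement is equivalent to the imperfection of $\overline{G}\wprod P_3$. This does \emph{not} reduce to Lemma~\ref{lem:C5trifree_augments_C5_P3}, however, because $\overline{G}$ is not triangle-free for any of the three augments (indeed $\alpha(G)\ge 3$, so $\overline{G}$ contains a triangle), and hence $\overline{G}$ is not itself a triangle-free augment of $C_5$. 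The three cases therefore have to be verified directly.
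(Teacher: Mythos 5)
Your proposal is correct and is essentially the paper's own proof: the paper likewise runs over the three triangle-free augments of $C_5$ and exhibits an induced odd hole or antihole in each product (displayed in a figure), concluding by the Strong Perfect Graph Theorem. Your explicit hole for $C_5 \uplus E_1$ checks out, and the ``entirely analogous'' holes you promise do exist for the other two augments --- with your labelling, $(6,a)\,(2,c)\,(4,a)\,(3,b)\,(5,c)$ works when $6$ is pendant at $1$, and $(6,a)\,(4,c)\,(1,a)\,(5,b)\,(2,c)$ works when $6$ is joined to $1$ and $3$ --- so carrying out those two remaining chord checks completes the argument exactly as in the paper.
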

\begin{proof}
    Figure~\ref{fig:trifree_augments_C5_K2uE1} shows the relevant graph products with induced odd holes and antiholes. The lemma follows from the strong perfect graph theorem.
    \begin{figure}[H]
        \centering
        \includegraphics[width=0.75\textwidth]{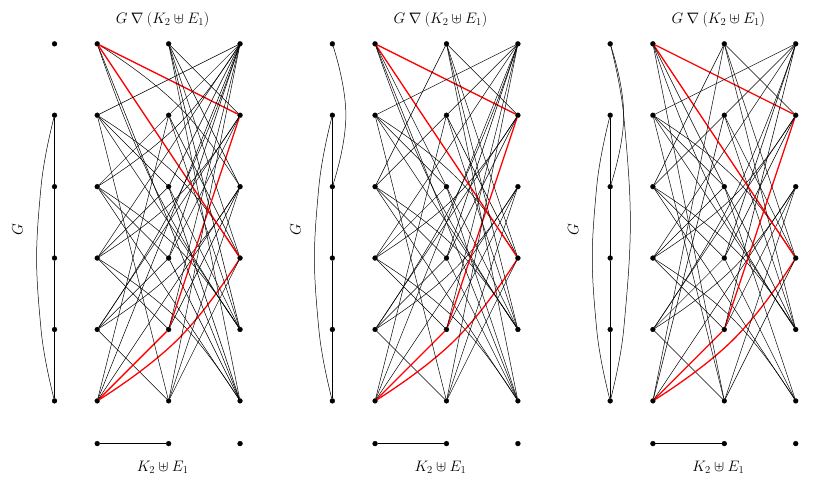}
        \caption{Weak modular product of $G$ and $K_2 \uplus E_1$, where $G$ is a triangle-free augment of $C_5$. An induced odd hole/antihole is denoted by a red, thick line.}
        \label{fig:trifree_augments_C5_K2uE1}
    \end{figure}
\end{proof}

\begin{lem}\label{lem:Bipartite_augments_P4_P3}
    Let $G$ be a bipartite augment of $P_4$. Then, $G \wprod P_3$ is not perfect.
\end{lem}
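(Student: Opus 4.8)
The plan is to reduce the statement to a finite check by first classifying the bipartite augments of $P_4$ up to isomorphism, and then exhibiting an induced odd hole or odd antihole in each of the resulting products with $P_3$, exactly in the spirit of the two preceding lemmas. Write $P_4$ as $1\text{-}2\text{-}3\text{-}4$, with partite sets $\{1,3\}$ and $\{2,4\}$, and let $e$ be the augmenting vertex. Since the augment is bipartite, $e$ must lie in a single partite class, so its neighbourhood is contained in $\{2,4\}$ or in $\{1,3\}$; moreover the edges among $\{1,2,3,4\}$ are unchanged, because $P_4$ is an induced subgraph. Running over the possible neighbourhoods of $e$ and quotienting by the reversal automorphism of $P_4$, I would show that there are exactly four bipartite augments up to isomorphism: $P_4 \uplus E_1$ (with $e$ isolated), $P_5$ (with $e$ attached to an end of the path), the tree obtained by attaching $e$ to an interior vertex of $P_4$ (the ``chair''/``fork''), and the ``banner'' consisting of $C_4$ with a pendant edge (with $e$ joined to both vertices of one class).

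Next I would form the $15$-vertex graph $G \wprod P_3$ for each of these four $G$, using the decomposition $G \wprod P_3 = (G \otimes P_3)\cup(\overline{G}\otimes\overline{P_3})$ from Lemma~\ref{lem:GprodHunion}, and locate an induced odd cycle or its complement. Writing $P_3 = a\text{-}b\text{-}c$, for the disconnected case $G = P_4 \uplus E_1$ with isolated vertex $5$, the five vertices $(3,a),(4,b),(3,c),(1,a),(5,c)$ induce a chordless $C_5$: the two edges $(3,a)(4,b)$ and $(4,b)(3,c)$ arise from adjacencies in both factors, while the three edges $(3,c)(1,a)$, $(1,a)(5,c)$, $(5,c)(3,a)$ arise from simultaneous non-adjacencies among the independent set $\{1,3,5\}$, and one checks directly that the remaining five pairs are non-edges. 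The very same configuration works for the fork, since there too $\{1,3,5\}$ is independent and vertex $4$ meets it only at $3$. In each case the strong perfect graph theorem (Proposition~\ref{prop:strongperfthm}) then yields imperfection.

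The hard part will be the two connected, denser augments $P_5$ and the banner: for these the clean $C_5$ above is unavailable, since in $P_5$ the unique independent triple $\{1,3,5\}$ has both remaining vertices adjacent to \emph{two} of its members (and the banner behaves analogously), so the witnessing odd hole or antihole has length seven and must be found, and then verified to be chordless, by direct inspection of the $15$-vertex product. I expect to present all four products with the offending odd hole/antihole highlighted in a single figure, following the format of Lemmas~\ref{lem:C5trifree_augments_C5_P3} and~\ref{lem:C5trifree_augments_C5_K2uE1}; the bookkeeping of confirming that a chosen seven-vertex set is induced is routine but error-prone, and is the main place where care is needed. Finally, although the lemma concerns only the five-vertex augments, it is worth recording that any bipartite graph possessing both an induced $P_4$ and a further vertex contains one of these four graphs as an induced subgraph; combined with the earlier observation that imperfection of $X \wprod Y$ is inherited by supergraphs, this lifts the result to rule out every bipartite factor with an induced $P_4$ against $P_3$.
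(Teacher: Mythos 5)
Your reduction to a finite check is exactly the strategy of the paper's proof (which exhibits, in Figure~\ref{fig:Bipartite_augments_P4_P3}, an offending induced subgraph in each product and invokes Proposition~\ref{prop:strongperfthm}), and the first half of your argument is sound: the four graphs you list ($P_4 \uplus E_1$, $P_5$, the fork, and the banner) are indeed all the bipartite augments of $P_4$ up to isomorphism, and your five vertices $(3,a),(4,b),(3,c),(1,a),(5,c)$ really do induce a $C_5$ in $G \wprod P_3$ both for $G = P_4 \uplus E_1$ and for the fork (two type-1 edges, three type-2 edges, and all five remaining pairs are non-edges). The genuine gap is that for $P_5$ and the banner you prove nothing: you assert that a length-seven witness ``must be found\ldots by direct inspection'' and that you ``expect to present'' it, but no witness is exhibited or verified. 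That is not a routine loose end; it is the entire content of the lemma in those two cases, and it is exactly the hard part. Your own observation confirms this: one can check that neither $P_5 \wprod P_3$ nor the banner's product with $P_3$ contains an induced $C_5$ at all, and neither can contain an odd antihole, since the three sets $V(G)\times\{y\}$, $y \in V(P_3)$, are independent, while an odd antihole on seven or more vertices has independence number $2$ and so fits at most six vertices into three independent sets. Hence the witness must be an odd hole of length at least seven, and until one is produced, Proposition~\ref{prop:strongperfthm} has nothing to act on.

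The gap is fillable. For $P_5$ (path $1$-$2$-$3$-$4$-$5$) and $P_3 = a$-$b$-$c$, the seven vertices $(2,c), (5,a), (1,c), (2,b), (1,a), (4,c), (3,b)$, taken in this cyclic order, induce a $C_7$ in $P_5 \wprod P_3$: the consecutive pairs are edges (via non-adjacency in both factors for the first, second, and fifth pairs, and adjacency in both factors for the rest), and each of the fourteen remaining pairs fails both clauses of the product's edge definition --- in particular any pair sharing a coordinate is automatically a non-edge. Alternatively, the $P_5$ case can be discharged by citing Lemma~\ref{lem:P3_prods}, which independently asserts that $P_3 \wprod P_5$ is not perfect. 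The banner, however, is covered nowhere else in the paper (its induced $C_4 \cong K_{2,2}$ is of no help, since $K_{2,2} \wprod P_3$ is perfect), so for that case you must still explicitly locate, and verify as chordless, an odd hole of length at least seven in the fifteen-vertex product before your proof of the lemma is complete.
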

\begin{proof}
    Figure~\ref{fig:Bipartite_augments_P4_P3} shows the relevant graph products with induced odd holes and antiholes. The lemma follows from the strong perfect graph theorem.
    \begin{figure}[H]
        \centering
        \includegraphics[width=\textwidth]{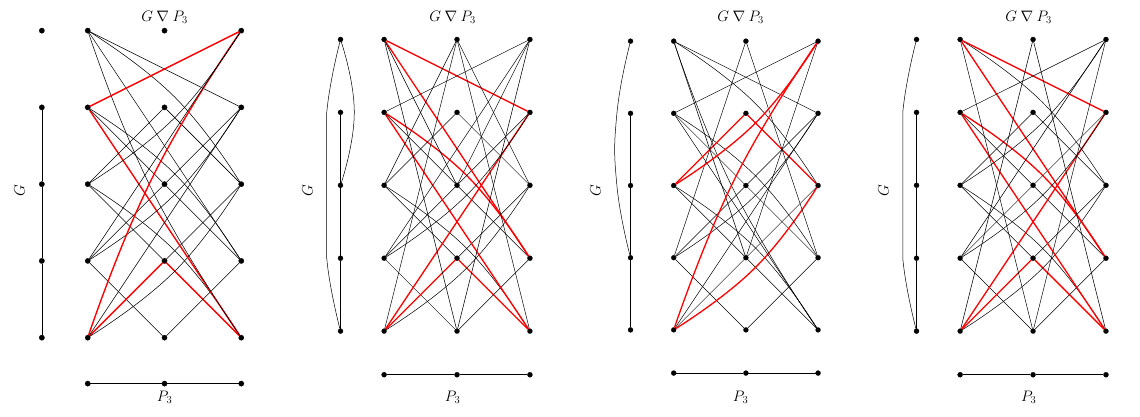}
        \caption{Weak modular product of $G$ and $P_3$, where $G$ is a bipartite augment of $P_4$. An induced odd hole/antihole is denoted by a red, thick line.}
        \label{fig:Bipartite_augments_P4_P3}
    \end{figure}
\end{proof}

\begin{lem}\label{lem:Bipartite_augments_P4_K2uE1}
    Let $G$ be a bipartite augment of $P_4$. Then, $G \wprod (K_2 \uplus E_1)$ is not perfect.
\end{lem}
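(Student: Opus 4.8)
The plan is to reduce the statement to a finite check settled by the Strong Perfect Graph Theorem (Proposition~\ref{prop:strongperfthm}), exactly in the spirit of the preceding augment lemmas. First I would classify the bipartite augments of $P_4$ up to isomorphism. Writing $P_4 = v_1 v_2 v_3 v_4$, its bipartition into $\{v_1,v_3\}$ and $\{v_2,v_4\}$ is forced, since $P_4$ is connected. An added vertex $u$ keeps the graph bipartite precisely when its neighbourhood lies entirely inside one of these two classes, as otherwise a path of odd length in $P_4$ between two neighbours of $u$ closes into an odd cycle. Enumerating the admissible neighbourhoods $N(u)\subseteq\{v_1,v_3\}$ or $N(u)\subseteq\{v_2,v_4\}$ and quotienting by the automorphism $v_1\leftrightarrow v_4,\ v_2\leftrightarrow v_3$ of $P_4$ leaves exactly four graphs: $P_4\uplus E_1$ (empty neighbourhood), $P_5$ (a pendant at an end of $P_4$), the tree obtained by attaching a pendant to an internal vertex, and $C_4$ with a pendant edge. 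Each visibly contains an induced $P_4$, so the list is complete.

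Next, for each of these four graphs $G$ I would form the fifteen-vertex product $G\wprod(K_2\uplus E_1)$, writing it via Lemma~\ref{lem:GprodHunion} as $G\otimes(K_2\uplus E_1)\cup\overline{G}\otimes P_3$ (recall $\overline{K_2\uplus E_1}\cong P_3$), and exhibit an induced odd hole or odd antihole in each; the claim then follows from Proposition~\ref{prop:strongperfthm}. Labelling $K_2\uplus E_1$ by $a\sim b$ together with an isolated $c$, the product edges are $(x,a)\sim(y,b)$ iff $x\sim_G y$, while $(x,a)\sim(y,c)$ and $(x,b)\sim(y,c)$ hold iff $x\not\sim_G y$ with $x\neq y$, and there are no edges within a layer. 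For instance, when $G=P_4\uplus E_1$ with isolated vertex $v_5$, the five vertices $(v_5,a),(v_1,c),(v_3,a),(v_2,b),(v_4,c)$ induce a $C_5$: the consecutive pairs are edges because $v_5\not\sim v_1$, $v_1\not\sim v_3$, $v_3\sim v_2$, $v_2\not\sim v_4$, $v_4\not\sim v_5$, whereas each of the five non-consecutive pairs is a non-edge (two lie in the same layer, and the remaining three fail the adjacency rule, e.g.\ $(v_1,c)(v_2,b)$ is absent since $v_1\sim v_2$). As in Lemmas~\ref{lem:C5trifree_augments_C5_P3}--\ref{lem:Bipartite_augments_P4_P3}, I would collect the four such certificates into a single figure with the offending hole or antihole highlighted.

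A useful structural guide for the search is that the augmenting vertex must participate in every offending hole: restricting the first coordinate to $\{v_1,v_2,v_3,v_4\}$ gives an induced subproduct equal to $P_4\wprod(K_2\uplus E_1)$, so any hole confined to it would already witness imperfection of that smaller product. This both narrows the search and explains why the certificate above uses a vertex of the form $(v_5,\cdot)$. I would also record the complementary reformulation: by Lemma~\ref{lem:wmp_comp_perf}, $G\wprod(K_2\uplus E_1)$ is perfect if and only if $\overline{G}\wprod P_3$ is, where $\overline{G}$ is a co-bipartite augment of $P_4$. This serves as a consistency check, but since $\overline{G}$ is co-bipartite rather than bipartite it does \emph{not} let me simply quote the companion Lemma~\ref{lem:Bipartite_augments_P4_P3}, so the four cases must genuinely be treated here.

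The main obstacle is bookkeeping rather than conceptual. For each of the four augments one must verify that the proposed vertices really do induce a chordless odd cycle, or its complement, which amounts to confirming under the adjacency rules above that every non-consecutive pair is a non-edge; the cleanest presentation is the figure-plus-Proposition~\ref{prop:strongperfthm} format already used in this subsection. The one point requiring care is that a short certificate need not always be available: I would look for an induced $C_5$ first, and in any case where the $C_4$- or tree-structure of $G$ forces chords onto every candidate pentagon, I would instead route a longer induced odd cycle (or locate an odd antihole) through the augmenting vertex's triple $(v_5,a),(v_5,b),(v_5,c)$, whose neighbourhoods are confined to a single opposite layer and therefore produce few chords.
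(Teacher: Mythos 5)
Your proposal is correct and takes essentially the same approach as the paper: enumerate the (four) bipartite augments of $P_4$, exhibit an induced odd hole or antihole in each product with $K_2 \uplus E_1$, and conclude via Proposition~\ref{prop:strongperfthm} --- which is precisely what the paper's figure-based proof does. Your classification of the augments is sound, and your explicit $C_5$ certificate for $P_4 \uplus E_1$ checks out under the product's adjacency rules.
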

\begin{proof}
    Figure~\ref{fig:Bipartite_augments_P4_K2uE1} shows the relevant graph products with induced odd holes and antiholes. The lemma follows from the strong perfect graph theorem.
    \begin{figure}[H]
        \centering
        \includegraphics[width=\textwidth]{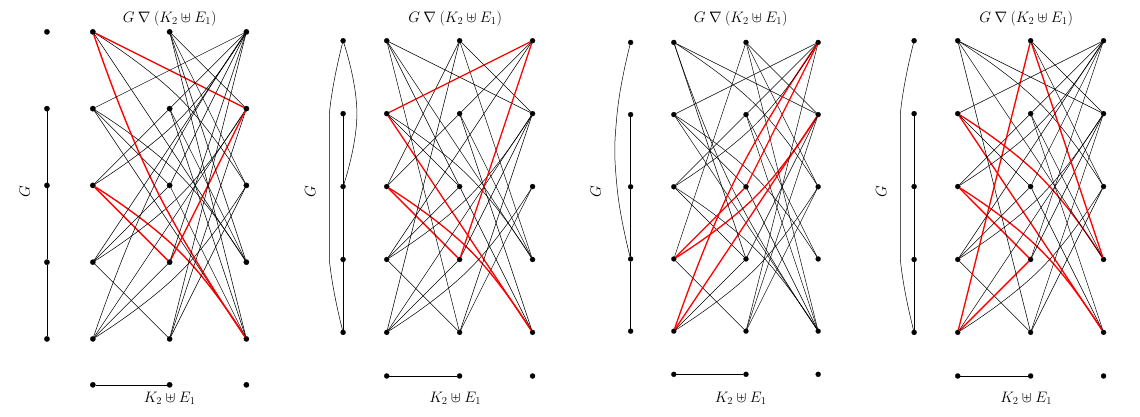}
        \caption{Weak modular product of $G$ and $K_2 \uplus E_1$, where $G$ is a bipartite augment of $P_4$. An induced odd hole/antihole is denoted by a red, thick line.}
        \label{fig:Bipartite_augments_P4_K2uE1}
    \end{figure}
\end{proof}

\begin{lem}\label{lem:P3_cricket_dart_hourglass}
    Let $G \in \{ \text{cricket, dart, hourglass} \}$. Then, $G \wprod P_3$ is not perfect.
\end{lem}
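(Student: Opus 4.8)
The plan is to follow the template of Lemmas~\ref{lem:C5trifree_augments_C5_P3}--\ref{lem:Bipartite_augments_P4_K2uE1}: for each of the three graphs I would write the product with $P_3$ explicitly using Lemma~\ref{lem:GprodHunion}, display an induced odd hole (or antihole), and conclude by the strong perfect graph theorem (Proposition~\ref{prop:strongperfthm}). A useful organising remark narrows the search. For any graph $G$, each ``copy'' $V(G)\times\{i\}$ indexed by a vertex $i$ of $P_3$ is an independent set in $G\wprod P_3$, since an edge inside a copy would require $i\sim i$ in $P_3$ or in $\overline{P_3}$. Thus $G\wprod P_3$ is tripartite with the three copies as colour classes, and because $\alpha(C_5)=2$ every induced $C_5$ must spread its vertices over the copies in the pattern $(2,2,1)$ — the unique proper $3$-colouring type of $C_5$. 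This pins down where to look.

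For the hourglass $G$, written as triangles $abc$ and $cde$ sharing $c$, I would exhibit the induced $C_5$ on $(a,1),(b,2),(c,1),(e,2),(d,3)$; by Lemma~\ref{lem:GprodHunion} four of its edges come from $G\otimes P_3$ and the closing edge joining $(d,3)$ and $(a,1)$ from $\overline{G}\otimes\overline{P_3}$, using $a\not\sim d$. An identical search for the cricket $G$ (triangle $abc$ with pendants $d,e$ at $a$) yields the induced $C_5$ on $(c,1),(b,2),(c,3),(d,1),(e,3)$. In both cases Proposition~\ref{prop:strongperfthm} finishes the argument.

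I expect the dart to be the main obstacle, because here the $(2,2,1)$ search provably fails. The dart $G$ has a universal vertex $a$ (its complement is $Y\uplus E_1$ by Observation~\ref{obs:namedcomplements}) together with a pair of non-adjacent twins $c,d$; these force any middle-copy vertex used to close a putative $C_5$ either to be the degree-$8$ vertex $(a,2)$ or to attach to both twins, producing a chord, and a short case check shows no induced $C_5$ exists at all. The remedy is a longer odd hole: for the dart with edges $ab,ac,ad,bc,bd,ae$, I would display the induced $C_7$ on $(b,1),(e,3),(c,1),(d,3),(e,1),(b,3),(c,2)$, whose first six vertices form an induced $P_6$ living entirely in the outer copies (all edges from $\overline{G}\otimes\overline{P_3}$) and whose last vertex $(c,2)$ closes the cycle via the two tensor edges from $(c,2)$ to the endpoints $(b,1)$ and $(b,3)$, using $c\sim b$. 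The delicate, error-prone step throughout is certifying chord-freeness by hand in a $15$-vertex graph; this is what the accompanying figures would make visual, and it is where the dart's $C_7$ demands the most care.
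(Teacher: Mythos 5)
Your proposal is correct and follows essentially the same route as the paper: exhibit an induced odd hole in each of the three products and invoke the strong perfect graph theorem (the paper does exactly this, only via figures rather than explicit coordinates). Your three witnesses all check out — the $C_5$'s for the hourglass and cricket, and the induced $C_7$ for the dart (where indeed no induced $C_5$ with the $(2,2,1)$ copy-pattern exists) — so the argument is sound.
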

\begin{proof}
    Figure~\ref{fig:P3_cricket_dart_hourglass} shows the relevant graph products with induced odd holes and antiholes. The lemma follows from the strong perfect graph theorem.
    \begin{figure}[H]
        \centering
        \includegraphics[width=0.7\textwidth]{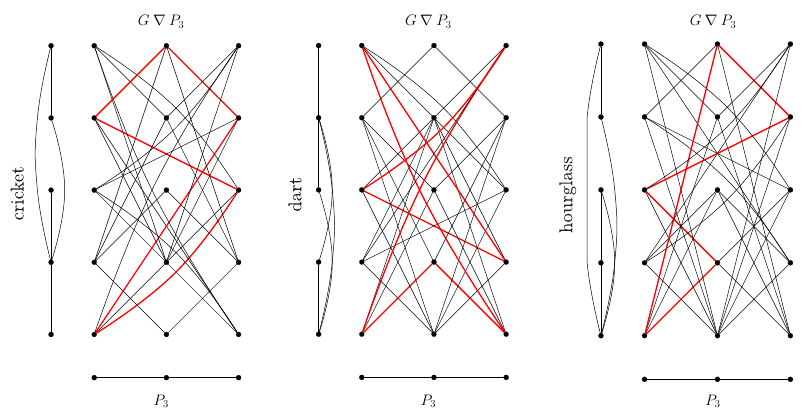}
        \caption{Weak modular product of $G$ and $P_3$, where $G \in \{ \text{cricket, dart, hourglass} \}$. An induced odd hole/antihole is denoted by a red, thick line.}
        \label{fig:P3_cricket_dart_hourglass}
    \end{figure}
\end{proof}

\begin{lem}\label{lem:K2uE1_products_nonperf}
    Let $G \in \{ K_{1,1,2}, Y, P_4 \uplus E_1, K_{2,2} \uplus E_1, P_5 \}$. Then $(K_2 \uplus E_1) \wprod G$ is not perfect.
\end{lem}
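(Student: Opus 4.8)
The plan is to treat each of the five graphs $G$ separately and, for each, exhibit an induced odd hole (or odd antihole) in $(K_2 \uplus E_1) \wprod G$; imperfection then follows from the strong perfect graph theorem (Proposition~\ref{prop:strongperfthm}). Throughout I would write the three vertices of $K_2\uplus E_1$ as $1,2,3$ with the single edge $\{1,2\}$, so that $\overline{K_2\uplus E_1} = P_3$ is the path on $1,3,2$. By Lemma~\ref{lem:GprodHunion} an edge $\{(i,v),(j,w)\}$ of the product is present exactly when either $\{i,j\}=\{1,2\}$ and $v\sim_G w$ (a \emph{type-1} edge), or $\{i,j\}\in\{\{1,3\},\{2,3\}\}$ and $v\sim_{\overline G} w$ (a \emph{type-2} edge). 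In particular the layer-$3$ vertex $(3,x)$ is adjacent to $(1,y)$ and $(2,y)$ precisely when $x\not\sim_G y$. This adjacency rule is all that is needed to check chordlessness in each witness below.

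One case is immediate by complementation. Since $\overline{K_{2,2}\uplus E_1}=\overline{C_4\uplus E_1}$ is the hourglass (Observation~\ref{obs:namedcomplements}), Lemma~\ref{lem:wmp_comp_perf} gives that $(K_2\uplus E_1)\wprod(K_{2,2}\uplus E_1)$ is perfect if and only if $P_3 \wprod \text{hourglass}$ is, and the latter is not perfect by Lemma~\ref{lem:P3_cricket_dart_hourglass}. This disposes of $G=K_{2,2}\uplus E_1$.

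For the two nonbipartite graphs $G\in\{K_{1,1,2}, Y\}$ I would use that the type-1 edges form the bipartite double cover $K_2\otimes G$, which is \emph{connected} because each such $G$ contains a triangle. A single layer-$3$ vertex $(3,x)$ then acts as a bridge joining the two lifts $(1,y),(2,y)$ of any $G$-nonneighbour $y$ of $x$; since these lifts lie in opposite colour classes, the unique double-cover path between them has odd length, and closing it through $(3,x)$ yields an induced odd hole. Concretely, labelling the diamond $K_{1,1,2}$ so that $cd$ is its only nonedge, the five vertices $(1,d),(2,a),(1,b),(2,d),(3,c)$ induce a $C_5$: the four type-1 edges give an induced $P_4$ of the double cover and the two type-2 edges $\{(2,d),(3,c)\}$, $\{(3,c),(1,d)\}$ close it, and one checks $(3,c)$ meets no other vertex since $c\sim_G a,b$. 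An analogous single-bridge construction for the paw produces an induced $C_7$.

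The main obstacle is the bipartite graphs $G\in\{P_5,\,P_4\uplus E_1,\,K_{2,2}\uplus E_1\}$ (the last already handled), for which $K_2\otimes G$ is \emph{disconnected}, being two disjoint copies of $G$; the single-bridge construction then only closes a triangle, so no odd hole arises from one layer-$3$ vertex. Here I would route an induced path through \emph{two} layer-$3$ bridges, one crossing between the double-cover copies in each direction. For $P_5$ with edges $ab,bc,cd,de$, the seven vertices $(3,b),(2,e),(3,c),(1,a),(2,b),(1,c),(2,d)$ induce a $C_7$; for $P_4\uplus E_1$ with $P_4$-edges $ab,bc,cd$ and isolated vertex $t$, the seven vertices $(3,b),(2,t),(3,c),(1,a),(2,b),(1,c),(2,d)$ induce a $C_7$. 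The delicate point — and what makes the bipartite cases genuinely harder than the nonbipartite ones — is that the layer-$3$ bridges have large neighbourhoods, so the bridge labels ($b,c$ above) and the type-1 backbone (the path $a,b,c,d$) must be chosen so that no chord appears; verifying the fourteen nonadjacencies of each $C_7$ via the adjacency rule is the bulk of the work but is entirely mechanical.
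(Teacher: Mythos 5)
Your proposal is correct and takes essentially the same approach as the paper, which likewise proves this lemma by exhibiting an induced odd hole or antihole in each of the five products (there, via a figure) and invoking the strong perfect graph theorem; your explicit $C_5$ and $C_7$ witnesses are indeed chordless (non-adjacency of pairs such as $(3,c),(1,c)$ relies on type-2 edges requiring distinct $G$-coordinates, which your formal rule via $\sim_{\overline{G}}$ does capture, though your informal restatement omits it), and the reduction of $K_{2,2}\uplus E_1$ through Lemma~\ref{lem:wmp_comp_perf}, Observation~\ref{obs:namedcomplements} and Lemma~\ref{lem:P3_cricket_dart_hourglass} is valid and non-circular since that lemma precedes this one. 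The only step you assert without verification is the paw, but it does go through: labelling the paw with triangle $a,b,c$ and pendant $d$ attached to $a$, the vertex $(3,b)$ has exactly the two neighbours $(1,d)$ and $(2,d)$ in the product, and $(3,b),(1,d),(2,a),(1,b),(2,c),(1,a),(2,d)$ is an induced $C_7$.
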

\begin{proof}
    Figure~\ref{fig:K2uE1_prods} shows the relevant graph products with induced odd holes and antiholes. The lemma follows from the strong perfect graph theorem.
    \begin{figure}[H]
        \centering
        \includegraphics[width=\textwidth]{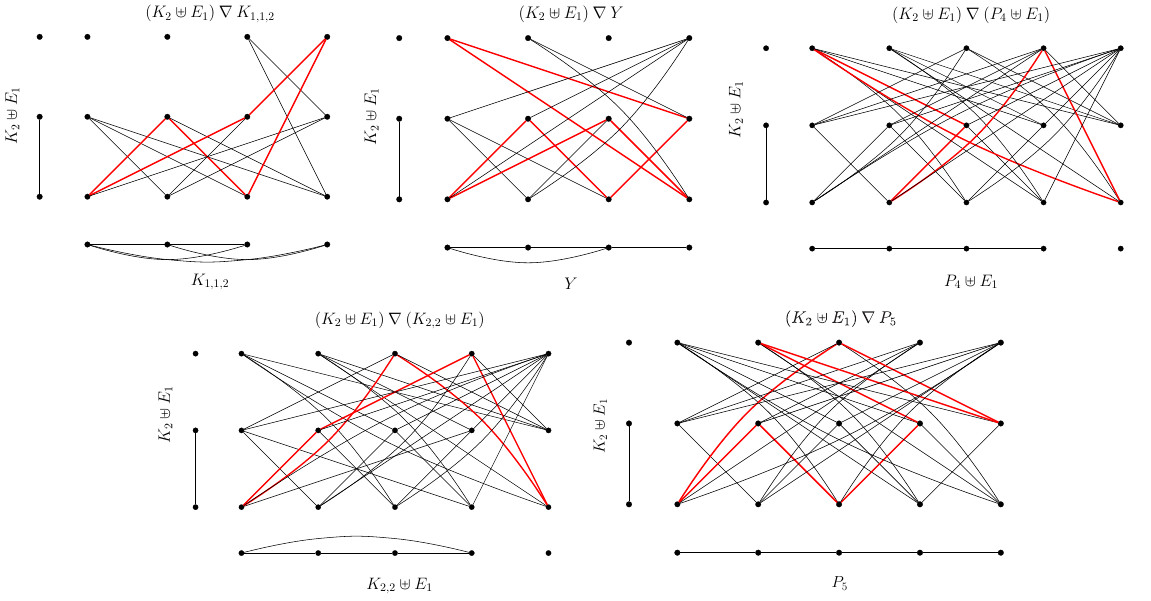}
        \caption{Weak modular product of $G$ and $K_2 \uplus E_1$, where $G \in \{ K_{1,1,2}, Y, P_4 \uplus E_1, K_{2,2} \uplus E_1, P_5 \}$. An induced odd hole/antihole is denoted by a red, thick line.}
        \label{fig:K2uE1_prods}
    \end{figure}
\end{proof}

\begin{lem}\label{lem:P3_prods}
    Let $G \in \{ K_2 \uplus E_2, P_3 \uplus E_1, P_5, K_{1,1,2} \uplus E_1, 3K_2 \}$. Then $P_3 \wprod G$ is not perfect.
\end{lem}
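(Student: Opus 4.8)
The plan is to avoid constructing any of the five product graphs directly and instead reduce every case to Lemma~\ref{lem:K2uE1_products_nonperf} by exploiting complementation. Since $\overline{P_3} = K_2 \uplus E_1$, Lemma~\ref{lem:wmp_comp_perf} gives that $P_3 \wprod G$ is perfect if and only if $(K_2 \uplus E_1) \wprod \overline{G}$ is perfect. So it suffices to show, for each of the five graphs $G$, that $(K_2 \uplus E_1) \wprod \overline{G}$ is imperfect; and by the observation that imperfection of a product is inherited from products of induced subgraphs, it is enough to exhibit in each $\overline{G}$ an induced copy of one of the graphs $\{K_{1,1,2}, Y, P_4 \uplus E_1, K_{2,2}\uplus E_1, P_5\}$ appearing in Lemma~\ref{lem:K2uE1_products_nonperf}, taking $K_2 \uplus E_1$ itself as the induced subgraph of the first factor.

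Carrying this out, I would first record the three complements that are named graphs via Observation~\ref{obs:namedcomplements}: $\overline{K_2 \uplus E_2} = K_{1,1,2}$, $\overline{P_3 \uplus E_1} = Y$, and $\overline{K_{1,1,2}\uplus E_1} = \text{cricket}$. The first two are themselves members of the Lemma~\ref{lem:K2uE1_products_nonperf} list, so those cases are immediate. For the cricket it suffices to note that it contains an induced paw $Y$ (its central triangle together with either pendant vertex), which is in the list. For the remaining two, I would compute $\overline{3K_2} = K_{2,2,2}$ (a disjoint union of cliques has complete multipartite complement by Lemma~\ref{lem:disjoint_compl_multi}, here with the three edges as parts) and locate an induced diamond $K_{1,1,2}$ by taking both vertices of one part together with one vertex from each of the other two parts, the unique non-edge being the within-part pair; and for $P_5$ I would compute $\overline{P_5}$ and exhibit an induced paw (a triangle of $\overline{P_5}$ together with a fourth vertex adjacent to exactly one of its vertices), or, alternatively, observe that $P_5$ is itself a bipartite augment of $P_4$ and invoke Lemma~\ref{lem:Bipartite_augments_P4_P3} directly. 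In each case Lemma~\ref{lem:K2uE1_products_nonperf} together with the induced-subgraph observation then finishes the argument.

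There is no deep obstacle here — the statement is essentially a bookkeeping exercise — so the only real work is the correctness of the case analysis. The two places requiring a genuine (if tiny) verification are the complements $\overline{P_5}$ and $\overline{3K_2} = K_{2,2,2}$, where one must actually exhibit the induced paw and induced diamond rather than reading the complement off Observation~\ref{obs:namedcomplements}; the $K_{1,1,2}\uplus E_1$ case is mildly delicate only in that it is a two-step reduction (first to the cricket, then to an induced paw). An entirely equivalent but less economical route, matching the style of the preceding lemmas, would be to draw each of the five products $P_3 \wprod G$ explicitly and highlight an induced odd hole or odd antihole, concluding by the strong perfect graph theorem (Proposition~\ref{prop:strongperfthm}); the reduction above is preferable precisely because it reuses Lemma~\ref{lem:K2uE1_products_nonperf} rather than repeating that bespoke search.
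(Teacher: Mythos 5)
Your proof is correct, but it takes a genuinely different route from the paper. The paper proves this lemma the same way it proves the neighbouring ones: it constructs each of the five products $P_3 \wprod G$ explicitly and exhibits, in a figure, an induced odd hole or odd antihole in each, concluding by the strong perfect graph theorem. You instead never touch the product graphs: using $\overline{P_3} = K_2 \uplus E_1$ and Lemma~\ref{lem:wmp_comp_perf} (which holds because $G \wprod H$ and $\overline{G} \wprod \overline{H}$ have identical edge sets), you reduce everything to the already-proved Lemma~\ref{lem:K2uE1_products_nonperf} via the induced-subgraph observation, and the only verifications left are small complement computations --- all of which check out: $\overline{K_2 \uplus E_2} = K_{1,1,2}$ and $\overline{P_3 \uplus E_1} = Y$ are themselves list members; the cricket $= \overline{K_{1,1,2} \uplus E_1}$ contains an induced paw (triangle plus one pendant); $\overline{3K_2} = K_{2,2,2}$ contains an induced diamond (one full part plus one vertex from each other part); and $\overline{P_5}$ (the house) contains an induced paw, with the bipartite-augment route via Lemma~\ref{lem:Bipartite_augments_P4_P3} equally valid. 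There is no circularity, since Lemma~\ref{lem:K2uE1_products_nonperf} and Lemma~\ref{lem:Bipartite_augments_P4_P3} precede this lemma and their proofs are independent of it. What the comparison buys: the paper's figure-based proof is self-contained but requires a bespoke search for odd holes/antiholes in five product graphs on $12$--$18$ vertices each, which is exactly the kind of verification that is hard to audit; your argument makes the lemma a formal consequence of its ``complement twin,'' concentrating all figure-checking in one place, at the cost of inheriting any error that might lurk in Lemma~\ref{lem:K2uE1_products_nonperf}'s figures rather than providing an independent witness.
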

\begin{proof}
    Figure~\ref{fig:P3_prods} shows the relevant graph products with induced odd holes and antiholes. The lemma follows from the strong perfect graph theorem.
    \begin{figure}[H]
        \centering
        \includegraphics[width=\textwidth]{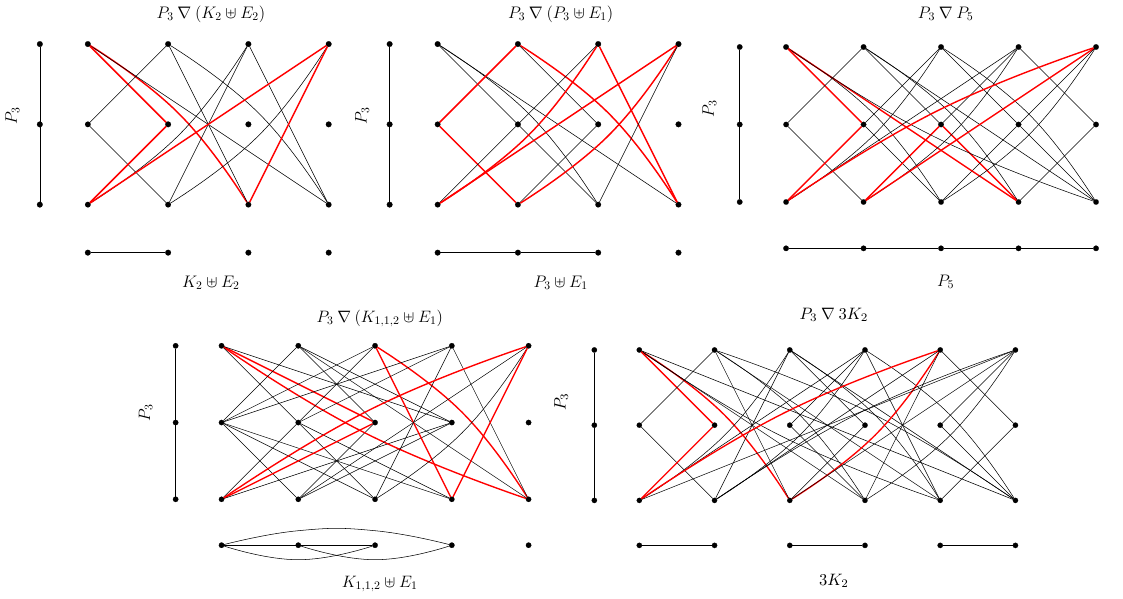}
        \caption{Weak modular product of $G$ and $P_3$, where $G \in \{ K_2 \uplus E_2, P_3 \uplus E_1, P_5, K_{1,1,2} \uplus E_1, 3K_2 \}$. An induced odd hole/antihole is denoted by a red, thick line.}
        \label{fig:P3_prods}
    \end{figure}
\end{proof}

\begin{lem}\label{lem:P4_prods}
    Let $G \in \{ 2K_2, Y, K_{2,2}, K_{1,1,2} \}$. Then $P_4 \wprod G$ is not perfect.
\end{lem}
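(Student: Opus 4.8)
The plan is to follow the same strategy as Lemmas~\ref{lem:C5trifree_augments_C5_P3}--\ref{lem:P3_prods}: for each factor $G \in \{2K_2, Y, K_{2,2}, K_{1,1,2}\}$, realise $P_4 \wprod G$ explicitly via Lemma~\ref{lem:GprodHunion}, namely as the edge-union $P_4 \otimes G \cup \overline{P_4} \otimes \overline{G}$ on the $16$-vertex set $V(P_4) \times V(G)$, exhibit a single induced odd hole or odd antihole inside it, and then invoke the strong perfect graph theorem (Proposition~\ref{prop:strongperfthm}) to conclude that the product is imperfect. As in the earlier lemmas, the offending induced subgraph is most cleanly communicated by a figure in which the highlighted cycle is drawn on the $4 \times 4$ grid used in the proof of Proposition~\ref{prop:Kozen}. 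Note that this lemma genuinely requires the factor $P_4$ and cannot be reduced to an earlier case: every proper induced subgraph of $2K_2$, $Y$, $K_{2,2}$ and $K_{1,1,2}$ on at most three vertices yields a \emph{perfect} product with $P_4$ (by items 1 and 2 of Theorem~\ref{thm:main_intro}), so each of these four factors is a minimal obstruction.

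Before drawing anything, I would cut the casework using complement duality. Since $P_4$ is self-complementary, Lemma~\ref{lem:wmp_comp_perf} gives that $P_4 \wprod G$ is perfect if and only if $P_4 \wprod \overline{G}$ is perfect. Because $\overline{2K_2} \cong K_{2,2}$, the factors $2K_2$ and $K_{2,2}$ are complement-dual, so these two cases collapse into one: it suffices to find an induced odd hole or antihole in $P_4 \wprod 2K_2$, and the $K_{2,2}$ case follows for free. This leaves genuinely three products to inspect, $P_4 \wprod 2K_2$, $P_4 \wprod Y$ and $P_4 \wprod K_{1,1,2}$.

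For the construction itself, I would search for an induced $C_5$ (or its complement) by alternating between the two edge-classes of the union: edges of $P_4 \otimes G$ pair an adjacency of $P_4$ with an adjacency of $G$, whereas edges of $\overline{P_4} \otimes \overline{G}$ pair a non-adjacency with a non-adjacency. A short alternating walk of length five that closes up and whose five potential chords are all absent yields the required odd hole; where no odd hole is readily visible, the complement of the same five vertices supplies an odd antihole, which is the mechanism already used for the diamond-like factors in Lemma~\ref{lem:K2uE1_products_nonperf}.

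The main obstacle is verificational rather than conceptual: having chosen five vertices, one must confirm not only that the five intended cycle edges are present but that none of the five possible chords is, so that the cycle is genuinely induced. On a $16$-vertex product this chord-checking is where errors creep in, and it is exactly the step that the accompanying figure is meant to make transparent. I expect the $K_{1,1,2}$ (diamond) factor to be the fiddliest, since $\overline{K_{1,1,2}} = K_2 \uplus E_2$ is sparse, so the term $\overline{P_4}\otimes\overline{K_{1,1,2}}$ contributes few edges and the telling obstruction is more naturally seen as an odd antihole than an odd hole.
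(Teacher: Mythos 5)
Your proposal takes essentially the same route as the paper: the paper's proof simply exhibits, in a figure, an induced odd hole or antihole in each of the four products $P_4 \wprod G$ and invokes the strong perfect graph theorem (Proposition~\ref{prop:strongperfthm}), which is exactly your plan. Your complement-duality reduction via Lemma~\ref{lem:wmp_comp_perf} (using that $P_4$ is self-complementary and $\overline{2K_2} \cong K_{2,2}$, so the $2K_2$ and $K_{2,2}$ cases coincide) is a valid refinement the paper does not use, but the substance of the argument --- locating and chord-checking the offending induced subgraphs --- is identical.
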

\begin{proof}
    Figure~\ref{fig:P4_prods} shows the relevant graph products with induced odd holes and antiholes. The lemma follows from the strong perfect graph theorem.
    \begin{figure}[H]
        \centering
        \includegraphics[width=0.6\textwidth]{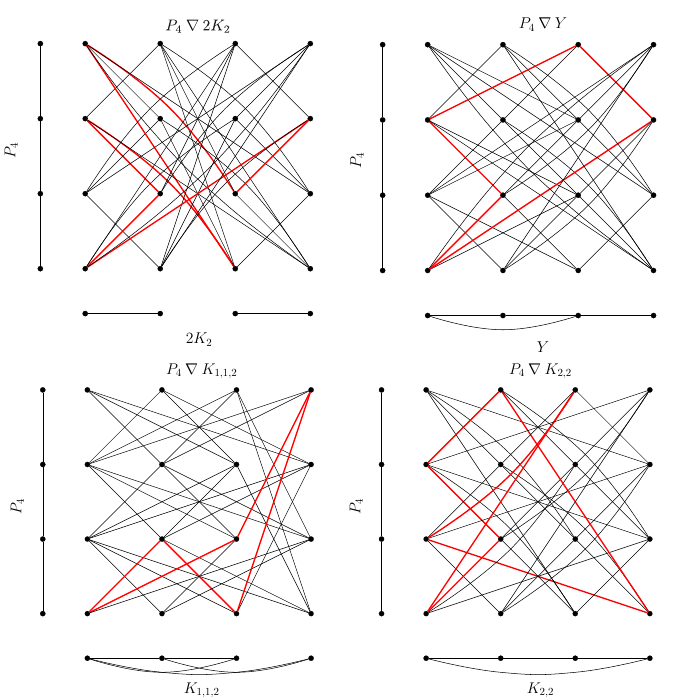}
        \caption{Weak modular product of $G$ and $P_4$, where $G \in \{ 2K_2, Y, K_{2,2}, K_{1,1,2}\}$. An induced odd hole/antihole is denoted by a red, thick line.}
        \label{fig:P4_prods}
    \end{figure}
\end{proof}

\begin{lem}\label{lem:K22_prods}
    Let $G \in \{ P_3 \uplus E_1, K_2 \uplus E_2\}$. Then $K_{2,2} \wprod G$ is not perfect.
\end{lem}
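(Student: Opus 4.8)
The plan is to avoid a fresh sixteen-vertex case analysis and instead reduce this statement to products that have already been dealt with in Lemma~\ref{lem:P3_prods}. The key point is that $K_{2,2}$ contains an induced $P_3$: taking one vertex from one partite set together with both vertices of the other partite set, the two cross edges are present while the within-partite pair is a non-edge, so those three vertices induce a path. I would then invoke the induced-subgraph monotonicity of imperfection, namely the observation that if $X$ is induced in $G$, $Y$ is induced in $H$, and $X\wprod Y$ is not perfect, then $G\wprod H$ is not perfect.

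Concretely, for either choice $G \in \{P_3 \uplus E_1, K_2 \uplus E_2\}$ I note that $G$ already appears verbatim among the factors of Lemma~\ref{lem:P3_prods}, so $P_3 \wprod G$ is known to be imperfect. Taking the induced $P_3 \subseteq K_{2,2}$ as the left subgraph and $G$ itself (induced in itself) as the right subgraph, the monotonicity observation immediately yields that $K_{2,2} \wprod G$ is not perfect. No new figure or explicit odd hole is needed; the work is pure bookkeeping, namely verifying that $K_{2,2}$ genuinely contains an induced $P_3$ and that both target graphs fall within the scope of Lemma~\ref{lem:P3_prods}.

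If a self-contained argument in the uniform style of the surrounding lemmas were preferred, the alternative is direct: expand the product via Lemma~\ref{lem:GprodHunion} as $K_{2,2}\wprod G = K_{2,2}\otimes G \cup \overline{K_{2,2}}\otimes\overline{G}$, where $\overline{K_{2,2}} = 2K_2$ and $\overline{G}$ is the diamond $K_{1,1,2}$ (for $G = K_2 \uplus E_2$) or the paw $Y$ (for $G = P_3 \uplus E_1$), lay the sixteen product vertices on the $4\times 4$ grid used in the proof of Proposition~\ref{prop:Kozen}, and exhibit an induced odd hole (a $C_5$ already suffices) or odd antihole. The Strong Perfect Graph Theorem (Proposition~\ref{prop:strongperfthm}) then finishes.

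For the reduction route the main obstacle is essentially nonexistent; it is a two-line deduction once the induced $P_3$ is spotted, and the only real risk is a bookkeeping slip, such as mis-identifying which three vertices of $K_{2,2}$ induce the path or forgetting that the monotonicity observation requires the imperfect subproduct to use the same $G$ on the right (which it does, since $G$ is induced in itself). The genuine difficulty lies only in the direct route: locating the offending induced subgraph among sixteen vertices and certifying that it is chordless, which is precisely the step the figures accompanying the companion lemmas are designed to discharge.
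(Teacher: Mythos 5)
Your proof is correct, but it takes a genuinely different route from the paper's. The paper proves Lemma~\ref{lem:K22_prods} in the same uniform style as its neighbours: it exhibits, in Figure~\ref{fig:K22_prods}, an explicit induced odd hole or antihole in each of the two sixteen-vertex products and invokes the strong perfect graph theorem (Proposition~\ref{prop:strongperfthm}). You instead note that $K_{2,2}$ contains an induced $P_3$ (one vertex of one partite set together with both vertices of the other), that both target graphs $P_3 \uplus E_1$ and $K_2 \uplus E_2$ appear verbatim in Lemma~\ref{lem:P3_prods}, and that the paper's induced-subgraph monotonicity observation then transfers imperfection of $P_3 \wprod G$ up to $K_{2,2} \wprod G$; this is sound because $X \wprod Y$ is an induced subgraph of $G \wprod H$ whenever $X$, $Y$ are induced in $G$, $H$, and perfection is hereditary. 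The reduction is also non-circular, since Lemma~\ref{lem:P3_prods} is proved by its own figure and nowhere relies on Lemma~\ref{lem:K22_prods}. What your route buys is economy and robustness: no fresh odd-hole certificate must be found and verified chordless among sixteen vertices, and it exposes that Lemma~\ref{lem:K22_prods} is logically redundant given the observation --- its later uses (e.g., in Lemma~\ref{lem:chunk_v_ii}) could cite Lemma~\ref{lem:P3_prods} directly. What the paper's route buys is a self-contained witness for each lemma: every imperfection claim carries its own concrete odd hole or antihole, independent of the correctness of the other product lemmas.
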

\begin{proof}
    Figure~\ref{fig:K22_prods} shows the relevant graph products with induced odd holes and antiholes. The lemma follows from the strong perfect graph theorem.
    \begin{figure}[H]
        \centering
        \includegraphics[width=0.65\textwidth]{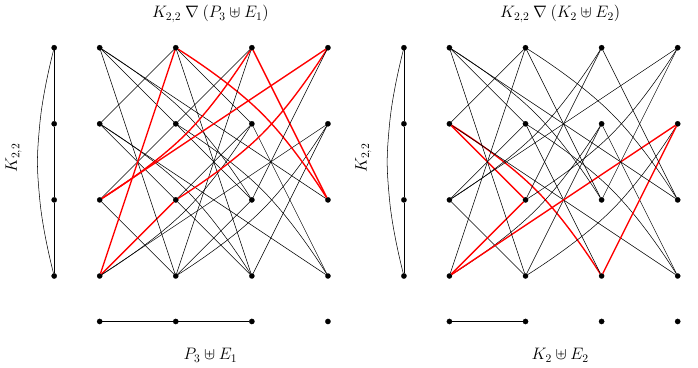}
        \caption{Weak modular product of $G$ and $K_{2,2}$, where $G \in \{ P_3 \uplus E_1, K_2 \uplus E_2 \}$. An induced odd hole/antihole is denoted by a red, thick line.}
        \label{fig:K22_prods}
    \end{figure}
\end{proof}

\begin{lem}\label{lem:C5_prods}
    Let $G \in \{ K_3, 2K_2, K_{1,3}, K_{2,2} \}$. Then $C_5 \wprod G$ is not perfect.
\end{lem}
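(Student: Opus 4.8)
The plan is to follow the figure-based method of the preceding lemmas---build each product $C_5 \wprod G$, exhibit an induced odd hole or odd antihole, and conclude imperfection from the Strong Perfect Graph Theorem (Proposition~\ref{prop:strongperfthm})---but to shortcut as much as possible by reducing to results already in hand, since the four products have $15$ or $20$ vertices and are unpleasant to analyse directly. The key tools will be the hereditary observation above (if $X \wprod Y$ is imperfect for induced subgraphs $X \subseteq G$ and $Y \subseteq H$, then $G \wprod H$ is imperfect), together with Lemma~\ref{lem:wmp_comp_perf} and the self-complementarity $\overline{C_5} \cong C_5$.

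First I would dispose of $G = K_3$. Because $\overline{K_3} = E_3$ is edgeless, Lemma~\ref{lem:GprodHunion} gives $C_5 \wprod K_3 = C_5 \otimes K_3$, and Corollary~\ref{cor:GprodKn} then settles this case at once: $C_5$ is itself an odd hole, hence not $(\text{odd hole, paw})$-free, and $n = 3 \geq 3$, so $C_5 \wprod K_3$ is not perfect. Next, for $G = 2K_2$ and $G = K_{2,2}$ I would note that deleting any vertex of $C_5$ leaves an induced $P_4$, and that $P_4 \wprod 2K_2$ and $P_4 \wprod K_{2,2}$ are both shown imperfect in Lemma~\ref{lem:P4_prods}; the hereditary observation then makes $C_5 \wprod 2K_2$ and $C_5 \wprod K_{2,2}$ imperfect. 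One of these two is in fact redundant, since $\overline{2K_2} \cong K_{2,2}$ and $\overline{C_5} \cong C_5$, so Lemma~\ref{lem:wmp_comp_perf} ties their perfection together.

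The remaining case, $G = K_{1,3}$, is the one I expect to be the main obstacle, because the claw is not reachable by any induced-subgraph reduction inside $C_5$: the induced subgraphs of $C_5$ provide only $P_4$, $P_3$, $K_2 \uplus E_1$ and smaller graphs, and none of the earlier $P_4$-, $P_3$- or $(K_2 \uplus E_1)$-product lemmas lists $K_{1,3}$ among its factors. The device I would use instead is complementation. Since $\overline{K_{1,3}} = K_3 \uplus E_1$ and $\overline{C_5} \cong C_5$, Lemma~\ref{lem:wmp_comp_perf} reduces the perfection of $C_5 \wprod K_{1,3}$ to that of $C_5 \wprod (K_3 \uplus E_1)$; and now $K_3$ is an induced subgraph of $K_3 \uplus E_1$, so the hereditary observation together with the already-settled $K_3$ case yields imperfection of $C_5 \wprod (K_3 \uplus E_1)$, and hence of $C_5 \wprod K_{1,3}$.

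If any of these reductions were to break down---most plausibly a slip in identifying $\overline{K_{1,3}}$ or in the hereditary step---I would fall back on the direct route used for the neighbouring lemmas, namely drawing $C_5 \wprod K_{1,3}$ explicitly and locating an induced odd hole or odd antihole, and likewise for the $2K_2$ and $K_{2,2}$ products. Under that fallback the only real labour is the bookkeeping of the $20$ vertices and their adjacencies; the conceptual content is again just Proposition~\ref{prop:strongperfthm}. I therefore expect the reduction-based argument to be both shorter and cleaner, with the complement detour for $K_{1,3}$ being the single non-routine idea.
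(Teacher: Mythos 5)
Your proposal is correct, and it takes a genuinely different route from the paper. The paper's own proof is the same as its neighbouring lemmas: it draws all four products in Figure~\ref{fig:C5_prods}, exhibits an explicit induced odd hole or antihole in each, and invokes Proposition~\ref{prop:strongperfthm}. You instead derive every case from results already on the table, and all your reductions check out: $C_5 \wprod K_3 = C_5 \otimes K_3$ falls to Corollary~\ref{cor:GprodKn} since $C_5$ is an odd hole (incidentally, the paper itself uses exactly this reduction, via Proposition~\ref{prop:Ravindra}, inside the proof of Lemma~\ref{lem:C5perf_prods}); the $2K_2$ and $K_{2,2}$ cases follow from $P_4 \subseteq C_5$, Lemma~\ref{lem:P4_prods}, and the hereditary observation, with Lemma~\ref{lem:wmp_comp_perf} making one of the two redundant since $\overline{2K_2} \cong K_{2,2}$ and $C_5$ is self-complementary; and your complement detour for the claw is sound, since $\overline{K_{1,3}} \cong K_3 \uplus E_1$ reduces that case to the already-settled $K_3$ case via Lemma~\ref{lem:wmp_comp_perf} and the hereditary observation again. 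There is no circularity: Lemma~\ref{lem:P4_prods}, Corollary~\ref{cor:GprodKn}, Lemma~\ref{lem:wmp_comp_perf} and the observation all precede this lemma and are proved independently of it. The trade-off is this: the paper's figure-based proof is self-contained and hands the reader explicit certificates of imperfection, but demands constructing and inspecting products on $15$ and $20$ vertices; your argument eliminates all new case analysis and shows that Figure~\ref{fig:C5_prods} is in principle dispensable, at the cost of resting on the earlier figure-based lemma (for $P_4 \wprod 2K_2$ and $P_4 \wprod K_{2,2}$) and on the external Ravindra--Parthasarathy theorem for the $K_3$ case, so the verification burden is relocated rather than removed.
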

\begin{proof}
    Figure~\ref{fig:C5_prods} shows the relevant graph products with induced odd holes and antiholes. The lemma follows from the strong perfect graph theorem.
    \begin{figure}[H]
        \centering
        \includegraphics[width=\textwidth]{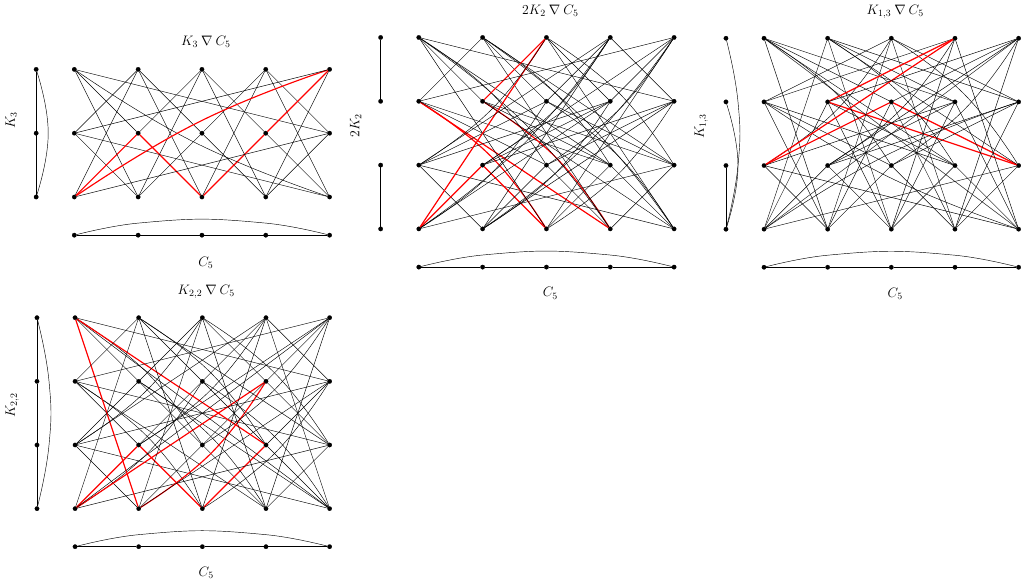}
        \caption{Weak modular product of $G$ and $C_5$, where $G \in \{ K_3, 2K_2, K_{1,3}, K_{2,2} \}$. An induced odd hole/antihole is denoted by a red, thick line.}
        \label{fig:C5_prods}
    \end{figure}
\end{proof}

\subsubsection{Perfect weak modular products}

In this section we find perfect weak modular product graphs, using tools from previous sections, notably Proposition~\ref{prop:Cameron}.

For the next lemma, we require the concept of a \emph{line graph}.
For a graph $G$, its line graph $L(G)$ is the graph where $V(L(G)) = E(G)$ and $\{e_1,e_2\} \in E(L(G))$ if and only if the edges $e_1, e_2 \in E(G)$ share a vertex in $V(G)$.

\begin{lem}\label{lem:C5perf_prods}
    Suppose $G$ is a graph containing an induced $P_3$. Then, $G \wprod C_5$ is perfect if and only if $G \in \{ P_3, P_4, C_5 \}$.
\end{lem}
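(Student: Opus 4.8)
The plan is to treat the two directions separately, in each case reducing to already-settled small products by means of the reduction observation (induced subgraphs of the factors yield induced subgraphs of the product) together with the commutativity of $\wprod$, which is immediate from its definition.

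For the \emph{if} direction I would first note that $P_3$ and $P_4$ are induced subgraphs of $C_5$ (delete two non-adjacent vertices, respectively one vertex), so that $P_3 \wprod C_5$ and $P_4 \wprod C_5$ are induced subgraphs of $C_5 \wprod C_5$; by the reduction observation it then suffices to prove that $C_5 \wprod C_5$ is perfect. The key idea---and the reason line graphs were just introduced---is that $C_5 \wprod C_5$ is the line graph of a bipartite graph. Concretely, identify $V(C_5)$ with $\mathbb{Z}_5$ so that $a \sim b \iff a-b \equiv \pm 1 \pmod 5$; then $(a,p)\sim(b,q)$ in $C_5 \wprod C_5$ exactly when $(a-b,\,p-q) \in \{(\pm1,\pm1)\}\cup\{(\pm2,\pm2)\}$, so $C_5 \wprod C_5$ is the Cayley graph of $\mathbb{Z}_5^2$ on this connection set. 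The linear map $(a,p)\mapsto(a+p,\,a-p) \pmod 5$ is a group automorphism of $\mathbb{Z}_5^2$ (its matrix has determinant $-2 \not\equiv 0 \pmod 5$), and it sends the connection set to $\{(k,0),(0,k) : k \in \mathbb{Z}_5\setminus\{0\}\}$, i.e. to the adjacency of pairs that agree in exactly one coordinate---precisely the line graph $L(K_{5,5})$ (the $5\times 5$ rook's graph). Since $K_{5,5}$ is bipartite and the line graph of a bipartite graph is perfect (a classical fact), $C_5 \wprod C_5 \cong L(K_{5,5})$ is perfect, and the \emph{if} direction follows.

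For the \emph{only if} direction I would argue the contrapositive: assuming $G$ contains an induced $P_3$ but $G \notin \{P_3,P_4,C_5\}$, I exhibit an induced subgraph of $G$ whose $\wprod$ with an induced subgraph of $C_5$ is imperfect. If $G$ contains any of $K_3, 2K_2, K_{1,3}, K_{2,2}$---exactly the graphs appearing in Lemma~\ref{lem:C5_prods}---then taking the second factor to be all of $C_5$ in the reduction observation (and using commutativity) shows $G \wprod C_5$ is imperfect. So we may assume $G$ is $(K_3,2K_2,K_{1,3},K_{2,2})$-free. Being $(K_3,K_{1,3})$-free forces maximum degree $\le 2$ (the neighbourhood of any vertex is an independent set of size at most two), so $G$ is a disjoint union of paths and cycles; $(K_3,K_{2,2})$-freeness makes every cycle component have length $\ge 5$; and $2K_2$-freeness then forces (i) at most one component to contain an edge, (ii) any cycle component to be exactly $C_5$ (since $C_k$ with $k\ge 6$ has an induced $2K_2$), and (iii) any path component to have at most $4$ vertices (since $P_5 \supseteq 2K_2$). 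As the induced $P_3$ is connected, it lies in the unique edge-containing component $Z$, whence $Z \in \{P_3,P_4,C_5\}$ and $G = Z \uplus E_m$ for some $m \ge 0$. If $m=0$ then $G=Z$, contrary to assumption; if $m\ge 1$ then $G$ contains an induced $P_3 \uplus E_1$, and since $P_3 \wprod (P_3 \uplus E_1)$ is imperfect by Lemma~\ref{lem:P3_prods} while $P_3$ is induced in $C_5$, the reduction observation yields that $G \wprod C_5$ is imperfect, completing the contrapositive.

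The main obstacle is the \emph{if} direction, specifically recognising that the $25$-vertex graph $C_5 \wprod C_5$ is isomorphic to $L(K_{5,5})$. Once the right linear change of coordinates on $\mathbb{Z}_5^2$ is found the verification is a one-line computation, but without this insight one is left with a direct Strong Perfect Graph Theorem check---ruling out every odd hole and odd antihole---on a graph far too large to treat by hand. By contrast, the \emph{only if} direction is essentially bookkeeping once the forbidden-subgraph lemmas of the previous subsection are in place.
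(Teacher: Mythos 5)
Your proof is correct, but it departs from the paper's in both directions, in ways worth noting. For the \emph{if} direction the underlying fact is the same---$C_5 \wprod C_5$ is the line graph of a bipartite graph---but where the paper exhibits an ad hoc ten-vertex bipartite graph $M$ via a figure and an edge-assignment table (a graph which, having parts of size $5$ and $25$ edges, is in fact $K_{5,5}$ in disguise), you identify the product outright as the $5\times 5$ rook's graph $L(K_{5,5})$ through the Cayley-graph description of $C_5\wprod C_5$ over $\mathbb{Z}_5^2$ and the change of coordinates $(a,p)\mapsto(a+p,a-p)$; this is the same isomorphism, but your derivation is self-contained and checkable by direct computation rather than figure-dependent. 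The \emph{only if} direction is where you genuinely diverge: the paper's contrapositive runs a triangle-free/nonbipartite/bipartite case tree that invokes Proposition~\ref{prop:Ravindra} (to dispose of triangles), Lemma~\ref{lem:triangle_free_odd_hole}, Lemma~\ref{lem:compbipP4}, and both augment lemmas (Lemmas~\ref{lem:C5trifree_augments_C5_P3} and~\ref{lem:Bipartite_augments_P4_P3}), whereas you dispose of the four graphs of Lemma~\ref{lem:C5_prods} at the outset and then prove a small structure theorem: a $(K_3,K_{1,3},K_{2,2},2K_2)$-free graph has maximum degree at most two, hence is a disjoint union of paths and cycles, and the freeness conditions whittle this down to at most one component from $\{P_3,P_4,C_5\}$ plus isolated vertices, after which Lemma~\ref{lem:P3_prods} (applied to an induced $P_3\uplus E_1$) finishes the job. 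Your route needs only Lemmas~\ref{lem:C5_prods} and~\ref{lem:P3_prods} together with the reduction observation, avoiding the augment lemmas and Ravindra's theorem entirely; what the paper's choice buys is reuse of machinery it must develop anyway for the other cases of Theorem~\ref{thm:main_intro}, while yours buys a shorter dependency chain and a cleaner, more elementary argument for this lemma in isolation.
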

\begin{proof}
    $(\Leftarrow)$ As demonstrated by Figure~\ref{fig:mantaray} and Table~\ref{tab:line_graph_v}, $C_5 \wprod C_5$ is the line graph $L(M)$ of the graph $M$, where $M$ is defined in Figure~\ref{fig:mantaray}.
    Observe further that $M$ is bipartite, with partite sets $\{ 0, 3, 5, 7, 8 \}$ and $\{ 1, 2, 4, 6, 9 \}$ using the labelling of Figure~\ref{fig:mantaray}.
    It is a well-known result that line graphs of bipartite graphs are perfect, and so $C_5 \wprod C_5$ is perfect.
    Moreover, since $P_3$ and $P_4$ are induced subgraphs of $C_5$, $P_3 \wprod C_5$ and $P_4 \wprod C_5$ are perfect.
    
    $(\Rightarrow)$ We prove the contrapositive. First observe that $C_5 \wprod K_3$ is not perfect, by Proposition~\ref{prop:Ravindra}.
    Let $G$ be triangle-free and suppose $G$ is not bipartite. Then, by Lemma~\ref{lem:triangle_free_odd_hole} $G$ has an induced odd hole, in which case $G$ contains an induced $C_5$ or an induced $P_5$.
    In the former case, $G \wprod C_5$ is perfect only if $G \cong C_5$, for if $G$ contains a triangle-free augment of $C_5$, then $G \wprod C_5$ is not perfect by Lemma~\ref{lem:C5trifree_augments_C5_P3}.
    In the latter case, from Lemma~\ref{lem:P3_prods} $P_4 \wprod P_5$ is not perfect and so $C_5 \wprod G$ is not perfect.
    Now suppose $G$ is bipartite. Then, by Lemma~\ref{lem:compbipP4} $G$ either has an induced $P_4$ or is a disjoint union of complete bipartites.
    In the former case we see $G \wprod C_5$ is perfect only if $G \wprod P_4$, for if $G$ contains a bipartite augment of $P_4$, then $G \wprod P_4$ is not perfect by Lemma~\ref{lem:Bipartite_augments_P4_P3}.
    In the latter case, $G$ either contains an induced $K_{2,2}$ or is a disjoint union of stars.
    If $G$ contains an induced $K_{2,2}$, $G \wprod C_5$ is not perfect by Lemma~\ref{lem:C5_prods}.
    If $G$ is a disjoint union of stars and $G \not\cong P_3$, $G$ contains either an induced $P_3 \uplus E_1$ or an induced $K_{1,3}$. Then, $G\wprod C_5$ is not perfect by Lemmas~\ref{lem:P3_prods} and \ref{lem:C5_prods} respectively. 
\end{proof}

    \begin{table}[H]
        \centering
        \begin{tabular}{@{}c|lllll@{}}
        \toprule
                          &  $(\,\cdot\, , 1)$ & $(\,\cdot\, , 2)$ & $(\,\cdot\, , 3)$ & $(\,\cdot\, , 4)$ & $(\,\cdot\, , 5)$ \\ \midrule
        $(1, \,\cdot \,)$ & $\{ 0, 1 \}$    & $\{ 5, 6 \}$    & $\{ 2, 8 \}$    & $\{ 3, 9 \}$    & $\{ 4, 7 \}$    \\
        $(2, \,\cdot \,)$ & $\{ 6, 7 \}$    & $\{ 0, 2 \}$    & $\{ 5, 9 \}$    & $\{ 4, 8 \}$    & $\{ 1, 3 \}$    \\
        $(3, \,\cdot \,)$ & $\{ 2, 3 \}$    & $\{ 7, 9 \}$    & $\{ 0, 4 \}$    & $\{ 1, 5 \}$    & $\{ 6, 8 \}$    \\
        $(4, \,\cdot \,)$ & $\{ 8, 9 \}$    & $\{ 3, 4 \}$    & $\{ 1, 7 \}$    & $\{ 0, 6 \}$    & $\{ 2, 5 \}$    \\
        $(5, \,\cdot \,)$ & $\{ 4, 5 \}$    & $\{ 1, 8 \}$    & $\{ 3, 6 \}$    & $\{ 2, 7 \}$    & $\{ 0, 9 \}$    \\ \bottomrule
        \end{tabular}
        \caption{Assignment of vertices in $C_5 \wprod C_5$ to edges in $M$, with vertex labels imposed in Figure~\ref{fig:mantaray}. Two vertices in $C_5 \wprod C_5$ are adjacent if and only if the corresponding edges in $M$ per the table share a common vertex. Thus $C_5 \wprod C_5 \cong L(M)$, where $L(M)$ is the line graph of $M$.}
        \label{tab:line_graph_v}
    \end{table}
    
    \begin{figure}[H]
        \centering
        \includegraphics[width=0.7\textwidth]{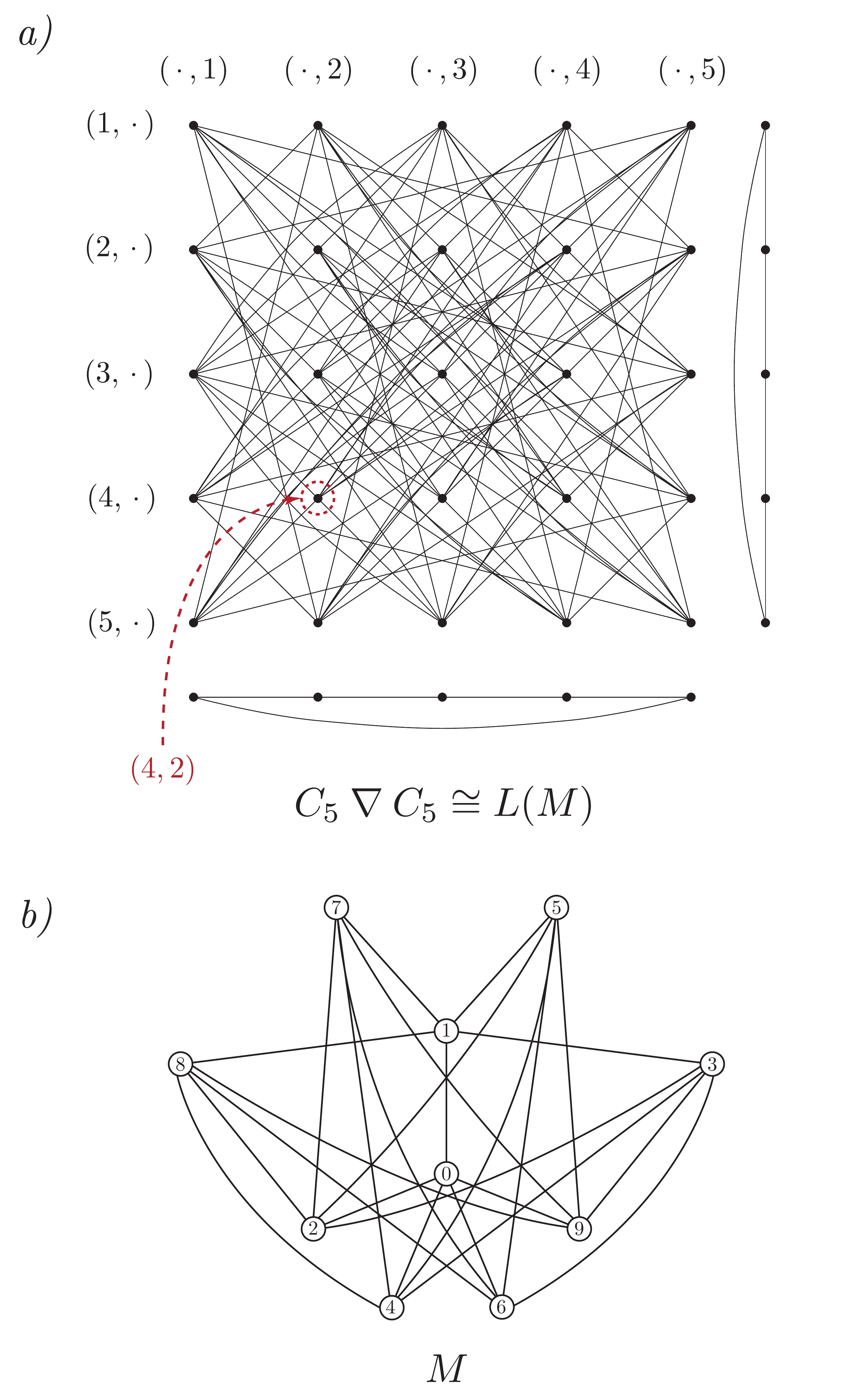}
        \caption{\emph{a)} The graph $C_5 \wprod C_5$ with vertex labelling indicated. \emph{b)} The graph we denote by $M$, with vertex labels. The graph $C_5 \wprod C_5$ is isomorphic to the line graph of $M$, i.e. $C_5 \wprod C_5 \cong L(M)$. The assignment of vertices in $C_5 \wprod C_5$ to edges in $M$ is given by Table~\ref{tab:line_graph_v}.}
        \label{fig:mantaray}
    \end{figure}

It is interesting to note that the graph $C_5 * C_5$ is not perfect, where $*$ is the strong, direct, Cartesian or associative product~\cite{Ravindra1977,Ravindra1978}.

\begin{cor}\label{cor:C5prodK2uE1}
    The graph $C_5 \wprod (K_2 \uplus E_1)$ is perfect.
\end{cor}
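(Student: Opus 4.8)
The plan is to avoid applying Lemma~\ref{lem:C5perf_prods} directly---which we cannot, since $K_2 \uplus E_1$ has no induced $P_3$ and hence falls outside the hypothesis of that lemma---and instead to exploit the complementation duality for perfectness of weak modular products provided by Lemma~\ref{lem:wmp_comp_perf}. The first step is to identify the relevant complements. A direct check of the definition of the complement gives $\overline{K_2 \uplus E_1} \cong P_3$: complementing an edge together with an isolated vertex turns the former edge into a non-edge and the two former non-edges into edges, yielding a path on three vertices. Moreover $C_5$ is self-complementary, so $\overline{C_5} \cong C_5$.

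With these two identifications in hand, Lemma~\ref{lem:wmp_comp_perf} tells us that $C_5 \wprod (K_2 \uplus E_1)$ is perfect if and only if $\overline{C_5} \wprod \overline{K_2 \uplus E_1} \cong C_5 \wprod P_3$ is perfect. The second step is then to invoke Lemma~\ref{lem:C5perf_prods} with the factor $G = P_3$: this graph trivially contains an induced $P_3$ and lies in the admissible set $\{ P_3, P_4, C_5 \}$, so $C_5 \wprod P_3$ is perfect. Chaining the two equivalences delivers the claim.

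There is no genuine obstacle here; the corollary is essentially a bookkeeping consequence of the preceding lemma once the correct complement is taken. The only point requiring (minimal) care is verifying the two isomorphisms $\overline{K_2 \uplus E_1} \cong P_3$ and $\overline{C_5} \cong C_5$, both of which are immediate from the definition of the complement. The conceptual content is entirely the observation that passing to complements sends the $P_3$-free factor $K_2 \uplus E_1$ to the factor $P_3$ already covered by Lemma~\ref{lem:C5perf_prods}, while fixing the self-complementary factor $C_5$.
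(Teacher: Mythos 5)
Your proof is correct and follows essentially the same route as the paper: the paper's own (one-line) proof likewise combines the perfection of $C_5 \wprod P_3$ from Lemma~\ref{lem:C5perf_prods} with the complementation duality of Lemma~\ref{lem:wmp_comp_perf}, using $\overline{C_5} \cong C_5$ and $\overline{K_2 \uplus E_1} \cong P_3$. You have merely spelled out the complement identifications that the paper leaves implicit.
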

\begin{proof}
    The graph $C_5 \wprod P_3$ is perfect taken with Lemma~\ref{lem:wmp_comp_perf}.
\end{proof}

\begin{lem}\label{lem:GHdisj_cliqs}
    Let $G$ and $H$ be the disjoint union of cliques. Then, $G \wprod H$ is perfect.
\end{lem}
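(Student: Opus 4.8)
The plan is to apply the theorem of Cameron, Edmonds and Lov\'asz (Proposition~\ref{prop:Cameron}) to the decomposition $G \wprod H = (G \otimes H) \cup (\overline{G} \otimes \overline{H})$ furnished by Lemma~\ref{lem:GprodHunion}. Both summands carry the common vertex set $V(G) \times V(H)$, so setting $G_1 := G \otimes H$ and $G_2 := \overline{G} \otimes \overline{H}$ matches the hypotheses of Proposition~\ref{prop:Cameron}, provided that \emph{(i)} $G_1$ and $G_2$ are each perfect, and \emph{(ii)} the composition condition holds: whenever $\{x,x'\}\in E(G_1)$ and $\{x',x''\}\in E(G_2)$ we have $\{x,x''\}\in E(G\wprod H)$.

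For \emph{(i)}, I would first note that a disjoint union of cliques is $P_3$-free by Lemma~\ref{lem:p3free}, and that both an odd hole and a paw contain an induced $P_3$; hence $G$ and $H$ are $(\text{odd hole},\text{paw})$-free, so Proposition~\ref{prop:Ravindra}(2) makes $G_1 = G \otimes H$ perfect. For $G_2$, Lemma~\ref{lem:disjoint_compl_multi} gives that $\overline{G}$ and $\overline{H}$ are complete multipartite (in the degenerate edgeless case, when a factor is a single clique, $G_2$ has no edges and is trivially perfect). A complete multipartite graph is connected and constitutes a single component, so by Lemma~\ref{lem:pawfree_perfect} it is $(\text{odd hole},\text{paw})$-free; Proposition~\ref{prop:Ravindra}(2) then makes $G_2 = \overline{G}\otimes\overline{H}$ perfect.

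The crux is \emph{(ii)}, and here I would exploit the fact that the adjacency relation of a disjoint union of cliques is ``almost transitive''. Write $x=(a,p)$, $x'=(b,q)$, $x''=(c,r)$. The hypothesis $\{x,x'\}\in E(G\otimes H)$ says $a\sim_G b$ and $p\sim_H q$, while $\{x',x''\}\in E(\overline{G}\otimes\overline{H})$ says $b\not\sim_G c$ and $q\not\sim_H r$ with $b\neq c$ and $q\neq r$. Since lying in a common clique is an equivalence relation on $V(G)$, the facts that $a$ and $b$ share a component while $b$ and $c$ lie in different components force $a$ and $c$ into different components, whence $a\neq c$ and $a\not\sim_G c$; the identical argument in $H$ yields $p\neq r$ and $p\not\sim_H r$. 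These are exactly the conditions for $\{x,x''\}$ to be an edge of the second kind in $G\wprod H$, establishing \emph{(ii)}. (The composition with $G_1$ and $G_2$ interchanged follows automatically by relabelling $x\leftrightarrow x''$, as the condition is quantified over all triples and the edges are undirected.) Applying Proposition~\ref{prop:Cameron} then gives that $G\wprod H$ is perfect. I expect step \emph{(ii)} to be the only genuine obstacle; the perfectness of the two tensor-product summands is essentially bookkeeping against the characterisations already assembled.
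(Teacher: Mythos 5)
Your proof is correct and follows essentially the same route as the paper's: the same decomposition $G \wprod H = (G \otimes H) \cup (\overline{G} \otimes \overline{H})$ fed into Proposition~\ref{prop:Cameron}, with Proposition~\ref{prop:Ravindra} supplying perfectness of both tensor factors and the same clique-membership (equivalence-class) argument establishing the composition condition. Your write-up is in fact slightly more explicit than the paper's on why $G$, $H$, $\overline{G}$, $\overline{H}$ are $(\text{odd hole, paw})$-free, but there is no substantive difference in approach.
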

\begin{proof}
    We shall proceed by using Proposition~\ref{prop:Cameron}, taking $G_1$ in the theorem statement as $G \otimes H$ and correspondingly, $G_2$ as $\overline{G} \otimes \overline{H}$. The assumptions of the theorem are satisfied, namely, $G_1$ and $G_2$ are perfect. This follows from Proposition~\ref{prop:Ravindra}, since $G$, $\overline{G}$, $H$ and $\overline{H}$ are $(\text{odd hole, paw})$-free.

    Now call $G = \biguplus_i^k K_{r_i}$ and $H = \biguplus^\ell_j K_{s_j}$. 
    It remains to show that $(x,y) \sim_{G\otimes H} (x',y')$ and $(x',y') \sim_{\overline{G} \otimes \overline{H}} (x'', y'')$ implies $(x,y) \sim_{G\otimes H \cup \overline{G} \otimes \overline{H}} (x'',y'') \equiv (x,y) \sim_{G \wprod H} (x'',y'')$, where $x, x', x'' \in V(G)$ and $y, y', y'' \in V(H)$. 
    Now, denote the subsets of vertices comprising the cliques in $G$ by $U_i$ respectively, that is, the $i$\supth clique of $G$ is induced on the vertex set $U_i$. The $j$\supth clique of $H$, $K_{n_j}$, is induced on the vertex set $V_j$.

    From definitions, we have that $(x,y) \sim_{G\otimes H} (x',y')$ if and only if $x \in U_i$, $x'\in U_i$, $x \neq x'$ for some $i \in [k]$, and $y \in V_j$, $y'\in V_j$, $y \neq y'$ for some $j \in [\ell]$.
    Moreover, we have that $(x,y) \sim_{\overline{G} \otimes \overline{H}} (x', y')$ if and only if $x \in U_i$, $x' \in U_{i'}$ for $i \neq i'$, $i,i'\in [k]$ and $y \in V_j$, $y' \in V_{j'}$ for $j \neq j'$, $j,j'\in [\ell]$.

    Suppose the edge $\{(x,y), (x',y')\}$ exists in $G\otimes H$ and $\{(x',y'), (x'',y'')\}$ exists in $\overline{G} \otimes \overline{H}$.
    We have that $x \in U_i$, $x' \in U_i$ for $x \neq x'$, $x'' \in U_{\tilde{i}}$ for $i \neq \tilde{i}$ and $y \in V_j$, $y' \in V_j$ for $y \neq y'$, $y'' \in V_{\tilde{j}}$ for $j \neq \tilde{j}$.
    Clearly, we have that $(x,y) \sim_{\overline{G} \otimes \overline{H}} (x'', y'')$, giving that $(x,y) \sim_{G\otimes H \cup \overline{G} \otimes \overline{H}} (x'',y'')$, and so $G \wprod H$ is perfect by Proposition~\ref{prop:Cameron}.
\end{proof}
\begin{cor}\label{cor:completemultiprodperf}
    Let $G$ and $H$ be complete multipartite graphs. Then, $G \wprod H$ is perfect.
\end{cor}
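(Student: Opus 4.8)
The plan is to reduce the statement to Lemma~\ref{lem:GHdisj_cliqs} by passing to complements, exploiting the complement-invariance of perfection under the weak modular product. The whole argument is a short chain of three invocations, so I expect no real obstacle beyond verifying that the hypotheses of each lemma are satisfied in the correct direction.

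First I would apply Lemma~\ref{lem:disjoint_compl_multi}: since $G$ and $H$ are complete multipartite, their complements $\overline{G}$ and $\overline{H}$ are each a disjoint union of cliques. Next I would feed $\overline{G}$ and $\overline{H}$ into Lemma~\ref{lem:GHdisj_cliqs}, which establishes that the weak modular product of two disjoint unions of cliques is perfect; hence $\overline{G} \wprod \overline{H}$ is perfect. Finally I would invoke Lemma~\ref{lem:wmp_comp_perf}, which asserts that $G \wprod H$ is perfect if and only if $\overline{G} \wprod \overline{H}$ is perfect, to conclude that $G \wprod H$ is itself perfect.

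The only point that warrants a moment's care is the bookkeeping of complements: one must confirm that Lemma~\ref{lem:disjoint_compl_multi} is used in the direction ``complete multipartite $\Rightarrow$ complement is a disjoint union of cliques,'' and that the equivalence of Lemma~\ref{lem:wmp_comp_perf} is then applied to transfer perfection back from $\overline{G} \wprod \overline{H}$ to $G \wprod H$. Since this direction is exactly what the two lemmas provide, the corollary follows immediately. No case analysis, odd-hole search, or direct appeal to Proposition~\ref{prop:Cameron} is required here, as all of that effort has already been carried out inside the proof of Lemma~\ref{lem:GHdisj_cliqs}.
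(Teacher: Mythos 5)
Your proof is correct and is essentially identical to the paper's: the paper's proof of this corollary is precisely ``take complements and use Lemmas~\ref{lem:wmp_comp_perf} and~\ref{lem:disjoint_compl_multi}'' applied to Lemma~\ref{lem:GHdisj_cliqs}. Your careful bookkeeping of the direction of each lemma is exactly what the paper leaves implicit.
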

\begin{proof}
    Take complements and use Lemmas~\ref{lem:wmp_comp_perf}~and~\ref{lem:disjoint_compl_multi}.
\end{proof}

\begin{lem}\label{lem:KruKs_Kmn}
    The graph $(K_r \uplus K_s) \wprod K_{m,n}$ is perfect.
\end{lem}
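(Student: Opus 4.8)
The plan is to show directly that $(K_r \uplus K_s) \wprod K_{m,n}$ is \emph{bipartite}, whence it is perfect, since bipartite graphs are perfect. This route is perhaps unexpected given that the immediately preceding results leaned on Proposition~\ref{prop:Cameron}; here that approach fails, because one can exhibit a length-two path whose first edge lies in $(K_r\uplus K_s)\otimes K_{m,n}$ and whose second edge lies in $\overline{K_r\uplus K_s}\otimes\overline{K_{m,n}}$ while its two endpoints are non-adjacent in the product (take $r,n\geq 2$ and move within a common clique, then across partite classes). So a genuinely different idea is needed.

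First I would record the bipartition data of each factor. Write $V(K_r\uplus K_s)=A\uplus B$, where $A$ and $B$ induce the cliques $K_r$ and $K_s$, and set $g(a)=0$ for $a\in A$, $g(a)=1$ for $a\in B$. Likewise write $V(K_{m,n})=P\uplus Q$ for the two partite sets and set $h(b)=0$ for $b\in P$, $h(b)=1$ for $b\in Q$. The key observation is that, for distinct vertices, membership is read off by $g$ and $h$: one has $\{x,x'\}\in E(K_r\uplus K_s)$ exactly when $g(x)=g(x')$ and $\{x,x'\}\in E(\overline{K_r\uplus K_s})$ exactly when $g(x)\neq g(x')$; dually, $\{y,y'\}\in E(K_{m,n})$ exactly when $h(y)\neq h(y')$ and $\{y,y'\}\in E(\overline{K_{m,n}})$ exactly when $h(y)=h(y')$.

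Next I would colour each vertex $(x,y)$ of the product by $c(x,y)=g(x)\oplus h(y)\in\{0,1\}$ and verify this is a proper $2$-colouring. By Lemma~\ref{lem:GprodHunion} every edge of the product falls into one of the two cases of the definition of $\wprod$. In the first case $\{x,x'\}\in E(K_r\uplus K_s)$ and $\{y,y'\}\in E(K_{m,n})$, so $g(x)=g(x')$ while $h(y)\neq h(y')$, giving $c(x,y)\neq c(x',y')$. In the second case $\{x,x'\}\in E(\overline{K_r\uplus K_s})$ and $\{y,y'\}\in E(\overline{K_{m,n}})$, so $g(x)\neq g(x')$ while $h(y)=h(y')$, again giving $c(x,y)\neq c(x',y')$. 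Thus across every edge exactly one of $g,h$ flips, the parity $g\oplus h$ is inverted, and the endpoints receive different colours; the product is therefore bipartite and hence perfect.

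The main obstacle is conceptual rather than computational: one must notice that, although $K_r\uplus K_s$ is far from bipartite whenever $r\geq 3$ or $s\geq 3$, the weak modular product with $K_{m,n}$ is nonetheless bipartite. The reason is precisely that the two clauses defining $\wprod$ are ``clique-membership constant while partite-class changes'' and ``clique-membership changes while partite-class constant,'' so each edge toggles exactly one coordinate of $(g,h)$ and hence the colour $g\oplus h$. Once this is seen the verification above is immediate, and there is nothing further to grind.
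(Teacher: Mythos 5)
Your proof is correct and takes essentially the same route as the paper: the paper's proof also establishes that $(K_r \uplus K_s) \wprod K_{m,n}$ is bipartite, exhibiting the partite sets $U_0 \times V_0 \cup U_1 \times V_1$ and $U_0 \times V_1 \cup U_1 \times V_0$, which are precisely your parity classes $g \oplus h = 0$ and $g \oplus h = 1$. Your observation that Proposition~\ref{prop:Cameron} cannot be applied here is consistent with the paper, which likewise abandons that tool for this lemma.
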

\begin{proof}
    Let $G = (K_r \uplus K_s)$ and $H = K_{m,n}$.
    Call the vertices comprising the cliques in $G$ $U_0$ and $U_1$ respectively, such that $V(G) = U_0 \cup U_1$. 
    Likewise, for brevity call the vertices comprising the partite sets in $H$ $V_0$ and $V_1$ respectively, so that $V(H) = V_0 \cup V_1$.
    From the definition of the weak modular product $G \wprod H$, we have that a vertex $(x,y) \in U_{z_1} \times V_{z_2}$ is adjacent to a vertex in $(x', y') \in U_{z_3} \times V_{z_4}$ if and only if $x \neq x'$, $y \neq y'$ and either: $z_1 = z_3$ and $z_2 \neq z_4$; or $z_1 \neq z_3$ and $z_2 = z_4$ for $z_1, z_2, z_3, z_4 \in \{0, 1\}$. There are no other edges.
    Notice that $G\wprod H$ comprises a complete bipartite graph with partite sets $U_0 \times V_0 \cup U_1 \times V_1$ and $U_0 \times V_1 \cup U_1 \times V_0$, with a perfect matching removed.
    Since $G \wprod H$ is bipartite, it is perfect.
\end{proof}

We now require a number of auxiliary lemmas to prove Proposition~\ref{prop:twocliq_disjstrscliqs}.

\begin{lem}\label{lem:Kr_uplus_Ks_wprod_G_uplus_H}
    Suppose $(K_r \uplus K_s) \wprod (G \uplus K_1)$ and $(K_r \uplus K_s) \wprod (H \uplus K_1)$ are perfect. Then, $(K_r \uplus K_s) \wprod (G \uplus H)$ is perfect.
\end{lem}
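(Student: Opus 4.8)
The plan is to invoke the Strong Perfect Graph Theorem (Proposition~\ref{prop:strongperfthm}) and show that $P := (K_r \uplus K_s) \wprod (G \uplus H)$ contains neither an odd hole nor an odd antihole. Write $A = K_r \uplus K_s$ with cliques $U_0, U_1$, call the \emph{type} of a vertex $(a,v)$ of $P$ the index $i$ with $a \in U_i$, and call its \emph{side} $G$ or $H$ according to whether $v \in V(G)$ or $v \in V(H)$. The decisive structural fact, read off from the definition of $\wprod$ together with the observation that $g \not\sim_{G\uplus H} h$ whenever $g \in V(G)$ and $h \in V(H)$, is that a $G$-side vertex and an $H$-side vertex are adjacent in $P$ if and only if their first coordinates lie in different cliques of $A$; that is, the \emph{cross edges depend only on type}.

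First I would record the reduction that uses the hypotheses. For a fixed $h_0 \in V(H)$, the $h_0$-column $\{(a,h_0) : a \in V(A)\}$ is an independent set whose vertices attach to the $G$-side exactly as a single added isolated vertex would; hence the subgraph of $P$ induced on the $G$-side together with the $h_0$-column is isomorphic to $(K_r \uplus K_s)\wprod (G \uplus K_1)$, and symmetrically for a single $G$-column. Therefore any odd hole or odd antihole of $P$ meeting one of the two sides in at most one vertex is, after this identification, an induced odd hole or antihole of $(K_r \uplus K_s)\wprod(G\uplus K_1)$ or of $(K_r\uplus K_s)\wprod(H\uplus K_1)$, contradicting the assumed perfection of these graphs via Proposition~\ref{prop:strongperfthm}.

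It remains to rule out the case in which a putative odd hole or antihole $C$ meets both sides in at least two vertices. Partition $V(C)$ into the four classes determined by (side, type), with counts $a,b,c,d$. Because cross edges depend only on type, within $C$ they form precisely a disjoint union of two complete bipartite graphs pairing opposite types across the sides; being a proper subset of the edges of the cycle $C$ (proper since $C$ is odd, so not every edge can cross the sides), they form a linear forest, forcing each complete bipartite block $K_{p,q}$ to satisfy $pq \le 2$. A short degree-and-connectivity argument then eliminates every configuration: when all four classes are nonempty the bound gives $\abs{V(C)} \le 6$, and the single tight pentagon is excluded by a direct degree count; when a class is empty, the vertices of one class on the minority side retain no cross edges and can attach only within their own side, which disconnects $C$, impossible for an induced cycle; and the remaining degenerate cases either place $C$ inside a join of two perfect graphs (where no odd hole can live) or leave $P$ with no cross edges at all (disconnecting the two sides of $C$). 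For odd antiholes one runs the identical analysis on the complementary cycle $\overline{C}$ using the cross \emph{non}-edges (which pair \emph{equal} types across the sides), and here the contradiction is immediate once all four classes are nonempty, since the linear-forest bound again gives $\abs{V(C)} \le 6$ while every odd antihole has at least $7$ vertices.

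The main obstacle is this last, hypothesis-free step: verifying that no odd hole survives when both sides contribute at least two vertices. The pentagon ($\abs{V(C)}=5$) is the tightest instance and must be excluded by an explicit degree argument rather than by counting, and one must check that the symmetry exchanging the two sides $G \leftrightarrow H$ (legitimate since the hypotheses and the adjacency rules are symmetric in $G,H$) and the symmetry exchanging the two cliques $U_0 \leftrightarrow U_1$ reduce the numerous emptiness sub-cases to a small number of representatives. Once these are dispatched, the Strong Perfect Graph Theorem yields that $P$ is perfect.
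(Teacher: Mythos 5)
Your proposal is correct, but it runs on a different engine than the paper's proof, so a comparison is worthwhile. Both arguments use the same four-way partition of the product's vertices by clique ($K_r$ vs.\ $K_s$) and side ($G$ vs.\ $H$), both invoke the Strong Perfect Graph Theorem, and your column-embedding reduction --- identifying the $G$-side together with a single $H$-column as a copy of $(K_r\uplus K_s)\wprod(G\uplus K_1)$ --- is exactly how the paper also consumes its hypotheses (there it appears inside the three-part case, when exactly one vertex of the offending subgraph lies in a given part, and in the two-part case $U_1\cup U_3$). Where you genuinely diverge is the hypothesis-free core. The paper exploits the join/co-join structure among the four parts: each part is completely joined in the product to one opposite part and completely joined in the complement to another, so once some part contains two vertices of the putative odd hole or antihole $X$, a triangle appears in $X$ or in $\overline{X}$, contradicting Corollary~\ref{cor:SPGT_cor}; the remaining two-part cases are dispatched by perfection of disjoint unions and of joins (the latter via the weak perfect graph theorem, Proposition~\ref{prop:weakperfthm}). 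You instead argue directly on the cycle: cross edges inside an induced odd cycle form a proper subgraph of it, hence a linear forest, so each complete bipartite cross block $K_{p,q}$ has $pq\le 2$; this caps the cycle at six vertices when all four classes are met, leaving only the $(2,1,1,1)$ pentagon, which your degree count does kill (the class-of-size-one vertex on the minority side needs a second neighbour, but the only candidate is already saturated), and the empty-class and antihole cases fall to connectivity/degree arguments and to the same analysis in the complement. Your route is more elementary --- it needs neither the weak perfect graph theorem nor perfection of joins (your optional appeal to join-perfection can even be replaced by the bound $pq\ge 4>2$) --- at the price of an explicit pentagon check and several emptiness sub-cases that the paper's triangle mechanism treats more uniformly; arguably your counting argument is also easier to verify line-by-line than the paper's rather terse claim that both $X$ and $\overline{X}$ acquire triangles. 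One small imprecision to fix: the smallest odd antihole, $\overline{C_5}\cong C_5$, has five vertices, not seven; this is harmless because it is simultaneously an odd hole and is excluded by your hole analysis, but the caveat should be stated when you invoke ``at least $7$ vertices.''
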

\begin{proof}
    For brevity, we denote $(K_r \uplus K_s) \wprod (G \uplus H)$ by $\Lambda$ and $V(\Lambda)$ by $U$.
    We draw the structure of $\Lambda$ in Figure~\ref{fig:Kr_uplus_Ks_wprod_G_uplus_H}
    Observe that $U$ is partitioned into four disjoint subsets of vertices: $V(K_r) \times V(G)$, $V(K_r) \times V(H)$, $V(K_s) \times V(G)$ and $V(K_s) \times V(H)$, which we respectively denote $U_1$, $U_2$, $U_3$ and $U_4$.
    Now for the sake of contradiction suppose $\Lambda$ is not perfect. Then, by the strong perfect graph theorem it contains an induced odd hole or antihole, which we call $X$.
    The vertices of $X$, $V(X)$, cannot lie solely in one partitioned subset of $U$, for in this case $X$ is an induced subgraph of a perfect graph and we have a contradiction.

    Suppose now $X$ has vertices in two of the partitioned subsets, \emph{i.e.} $V(X) \subseteq U_i \cup U_j$ for $i,j \in \{ 1,2,3,4 \}$, $i \neq j$ and $V(X) \not\subseteq U_i$, $V(X) \not\subseteq U_j$.
    If $V(X) \subseteq U_1 \cup U_2$ or $V(X) \subseteq U_3 \cup U_4$, then $X$ is an induced subgraph of a disjoint union of perfect graphs and we have a contradiction.
    If $V(X) \subseteq U_1 \cup U_3$ or $V(X) \subseteq U_2 \cup U_4$ then $X$ is an induced subgraph of $(K_r \uplus K_s) \wprod G$ or $(K_r \uplus K_s) \wprod H$ respectively, which are perfect by assumption, yielding a contradiction.
    We also obtain a contradiction when $V(X) \subseteq U_1 \cup U_4$ or $V(X) \subseteq U_2 \cup U_3$, as $\overline{\Lambda}[U_1 \cup U_4]$ and $\overline{\Lambda}[U_2 \cup U_3]$ are disjoint unions of perfect graphs and so $\Lambda[U_1 \cup U_4]$, $\Lambda[U_2 \cup U_3]$ are perfect by the weak perfect graph theorem.

    Assume now that $X$ has vertices lying in three of the partitioned subsets, \emph{i.e.} $V(X)\cap U_i = \emptyset$ for exactly one $i \in \{1,2,3,4\}$. Furthermore, let $i = 4$ without loss of generality, so that $V(X) \subseteq U_1 \cup U_2 \cup U_3$.
    Observe that every vertex of $U_2$ is adjacent to every vertex of $U_3$ in $\Lambda$ by definition, and vice versa.
    Moreover, $U_2$ is adjacent to every vertex of $U_1$ in $\overline{\Lambda}$ and vice versa.
    Suppose now that at least two vertices of $X$ lie in $U_2$. By assumption there is at least one vertex of $X$ in each of $U_3$ and $U_1$. Thus, $\Lambda[V(X)]$ and $\overline{\Lambda}[V(X)]$ both contain a triangle and we have a contradiction with Corollary~\ref{cor:SPGT_cor}, as neither $X$ nor $X$ are an odd hole.
    Now suppose one vertex of $X$ lies in $U_2$. Thus, $X$ is an induced subgraph of $(K_r \uplus K_s) \wprod (G \uplus K_1)$ and we have a contradiction since $(K_r \uplus K_s) \wprod (G \uplus K_1)$ is perfect by assumption.

    Finally, assume there is a vertex from $X$ in every partition of $U$, \emph{i.e.} $V(X)\cap U_i \neq \emptyset$ for every $i \in \{1,2,3,4\}$.
    By the pigeonhole principle, there is at least one $j \in \{ 1, 2, 3, 4 \}$ such that $U_j$ has two or more vertices from $X$. Moreover, for any $j$ there exist $j', j'' \in \{1,2,3,4\}$ where $j \neq j'$, $j \neq j''$, $j' \neq j''$ such that every vertex in $U_j$ is connected to every vertex in $U_{j'}$ in $\Lambda$ and every vertex in $U_j$ is connected to every vertex in $U_{j''}$ in $\overline{\Lambda}$. By the argument in the previous paragraph, this contradicts the assumption that $X$ is an odd hole or antihole.
\end{proof}
\begin{figure}[H]
    \centering
    \includegraphics[width=0.50\textwidth]{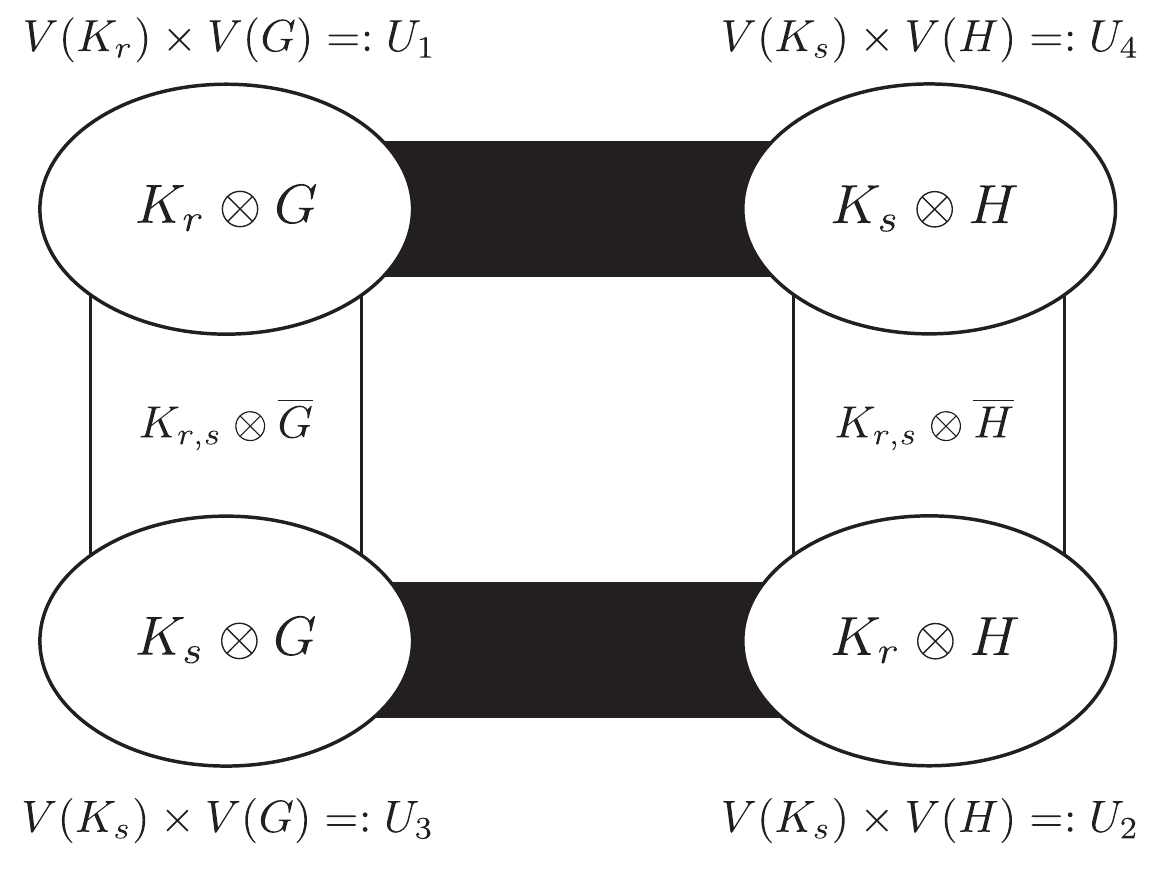}
    \caption{Structure of the graph $(K_r \uplus K_s) \wprod (G \uplus H)$ with the vertex set partition $(U_1, U_2, U_3, U_4)$ defined in the proof of Lemma~\ref{lem:Kr_uplus_Ks_wprod_G_uplus_H}. Every vertex in $U_1$ is adjacent to every vertex in $U_4$ and vice versa. Every vertex in $U_2$ is adjacent to every vertex in $U_3$ and vice versa.} 
    \label{fig:Kr_uplus_Ks_wprod_G_uplus_H}
\end{figure}

\begin{cor}\label{cor:twocliq_disj_G_H}
    Suppose $(K_r \uplus K_s) \wprod (G \uplus K_1)$ and $(K_r \uplus K_s) \wprod (H \uplus K_1)$ are perfect. Then, $(K_r \uplus K_s) \wprod (\biguplus_{i=1}^{k_1} G \uplus \biguplus_{i=1}^{k_2} H)$ is perfect for any $k_1, k_2 \in \mathbb{N}$.
\end{cor}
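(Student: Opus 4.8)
The plan is to prove a single strengthened statement by induction and then recover the corollary as a special case. Throughout, write $\Phi(A) := (K_r \uplus K_s) \wprod A$ for the fixed cliques $K_r, K_s$, and let $E_t = t K_1$ denote $t$ isolated vertices. I would show that $\Phi(aG \uplus bH \uplus E_t)$ is perfect for all integers $a, b, t \geq 0$; the corollary is then the case $a = k_1$, $b = k_2$, $t = 0$. Carrying the extra block $E_t$ of isolated vertices is the device that makes the induction close: each use of Lemma~\ref{lem:Kr_uplus_Ks_wprod_G_uplus_H} consumes the hypothesis that \emph{both} halves of a disjoint union are augmented by a single $K_1$, so one must keep track of the spare vertices explicitly rather than hope to maintain exactly one.

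I would induct on the number $a + b$ of non-trivial components, keeping $t$ universally quantified. For the base case $a + b = 0$ the graph is $\Phi(E_t)$, which is perfect by Lemma~\ref{lem:GHdisj_cliqs} since both $K_r \uplus K_s$ and $E_t$ are disjoint unions of cliques. For the inductive step, suppose $a + b \geq 1$ and, without loss of generality, $a \geq 1$ (the case $a = 0$, $b \geq 1$ being symmetric in $H$). Split the target graph as $A \uplus B$ with $B = G$ a single copy and $A = (a-1)G \uplus bH \uplus E_t$, so that $A \uplus B$ is precisely $aG \uplus bH \uplus E_t$. Applying Lemma~\ref{lem:Kr_uplus_Ks_wprod_G_uplus_H} to the factors $A$ and $B$ requires $\Phi(B \uplus K_1) = \Phi(G \uplus K_1)$ perfect, which is a hypothesis of the corollary, and $\Phi(A \uplus K_1)$ perfect. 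The key observation is that
\[
A \uplus K_1 = (a-1)G \uplus bH \uplus E_{t+1},
\]
which has only $(a-1) + b < a + b$ non-trivial components, so $\Phi(A \uplus K_1)$ is covered by the induction hypothesis (with the isolated-vertex count raised from $t$ to $t+1$). Lemma~\ref{lem:Kr_uplus_Ks_wprod_G_uplus_H} then gives that $\Phi(A \uplus B)$ is perfect, closing the induction.

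The only real obstacle is precisely this bookkeeping of isolated vertices, and it is worth flagging why the naive approach fails. If one tries to induct while insisting on exactly one spare $K_1$, then re-establishing the ``$\uplus K_1$'' hypothesis after adjoining a copy of $G$ or $H$ forces a second spare vertex (an $E_2$), then a third, and so on, and the recursion never terminates against the single available lemma. Strengthening the claim to permit an arbitrary number $t$ of isolated vertices dissolves this difficulty: demoting one copy of $G$ (or $H$) to play the role of the lemma's auxiliary $K_1$ simultaneously lowers the induction measure $a + b$ and raises $t$, and the latter does no harm because $t$ ranges over all non-negative integers. Setting $t = 0$ recovers the corollary, and the same argument with $t = 1$ yields the augmented form $\Phi(k_1 G \uplus k_2 H \uplus K_1)$ perfect at no extra cost.
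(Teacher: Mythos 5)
Your proof is correct and follows essentially the route the paper intends: the corollary is stated without proof as a direct iteration of Lemma~\ref{lem:Kr_uplus_Ks_wprod_G_uplus_H}, and your induction on the number of non-trivial components is exactly that iteration made precise. Your strengthening to $(K_r \uplus K_s) \wprod (aG \uplus bH \uplus E_t)$ with an arbitrary number $t$ of isolated vertices is the right device for closing the induction --- a bookkeeping point the paper glosses over, since the naive induction hypothesis does not by itself supply perfection of $(K_r \uplus K_s) \wprod \bigl((a-1)G \uplus bH \uplus K_1\bigr)$ --- and the base case via Lemma~\ref{lem:GHdisj_cliqs} is handled correctly.
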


\begin{lem}\label{lem:twocliq_wprod_K1r_uplus_K1}
    The graph $(K_r \uplus K_s) \wprod (K_{1,m} \wprod K_1)$ is perfect.
\end{lem}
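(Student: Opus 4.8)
The plan is to establish perfection directly via the Strong Perfect Graph Theorem (Proposition~\ref{prop:strongperfthm}), by ruling out induced odd holes and induced odd antiholes. First I would fix notation capturing the structure. Write the second factor as a star $K_{1,m}$ with centre $c$ and leaf set $L$ together with an isolated vertex $w$, and write $G=K_r\uplus K_s$ with cliques on vertex sets $U_0,U_1$; let $\Lambda$ denote the product. Partition $V(\Lambda)$ into the \emph{star layers} $S=V(G)\times(\{c\}\cup L)$ and the \emph{$w$-layer} $T=V(G)\times\{w\}$. The structural facts I would record, all immediate from the definition of $\wprod$, are: (i) $\Lambda[S]=(K_r\uplus K_s)\wprod K_{1,m}$ is bipartite, hence perfect, by Lemma~\ref{lem:KruKs_Kmn}; (ii) $T$ is an independent set, and the only edges of $\Lambda$ joining $S$-vertices of opposite $G$-parts are the ``leaf--leaf'' edges (so a centre-layer vertex has no neighbours of the opposite part inside $S$); and (iii) every $w$-vertex lying over $U_z$ is completely joined to all $S$-vertices lying over $U_{\overline z}$ and non-adjacent to everything else, so the $w$-vertices over a fixed part are pairwise non-adjacent with a common neighbourhood, i.e.\ they are false twins.

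Before the hole analysis I would note that $\omega(\Lambda)\le 3$: a clique meets the independent set $T$ in at most one vertex, and the neighbourhood of a $w$-vertex induces a bipartite subgraph of $\Lambda[S]$, contributing at most two further clique vertices. Since the odd antihole $\overline{C_{2k+1}}$ has clique number $k$, this already excludes every odd antihole on nine or more vertices; it then remains to exclude odd holes, the antihole $\overline{C_5}\cong C_5$ (itself an odd hole), and $\overline{C_7}$.

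For the odd holes, suppose $X=C_{2k+1}$ is induced in $\Lambda$. It cannot lie wholly in $S$ (bipartite) nor wholly in $T$ (edgeless), so it meets $T$; by fact (iii) and the observation that $C_{2k+1}$ with $2k+1\ge 5$ has no two vertices sharing a neighbourhood, $X$ contains at most one $w$-vertex over each part, hence one or two $T$-vertices in total. If $X$ contains a single $w$-vertex $v$, its two cycle-neighbours lie in $N(v)$ while, to avoid chords, all remaining cycle vertices avoid $N(v)$; deleting $v$ leaves an induced path of $X$ inside the bipartite graph $\Lambda[S]$ whose endpoints both lie in the leaf layer on the \emph{same} side of the bipartition (fact (ii)), forcing an even number of edges and contradicting the parity of $2k+1$. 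If $X$ contains one $w$-vertex over each part, these two vertices are non-adjacent, and each of the two connecting arcs must consist of exactly two $S$-vertices --- an interior arc vertex would have to avoid both parts' star layers, which exhaust $S$ --- forcing $X\cong C_6$, again impossible. The antihole $\overline{C_7}$ is dispatched in the same spirit: using $\alpha(\overline{C_7})=2$ and comparing the neighbourhoods that facts (i)--(iii) impose on its $w$-vertices, each admissible placement produces three vertices forced to lie in both parts' star layers at once, a contradiction.

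The main obstacle, and the reason I avoid the union technique of Proposition~\ref{prop:Cameron} used in Lemma~\ref{lem:GHdisj_cliqs}, is that the natural decomposition of $\Lambda$ into $G\otimes H$ and $\overline G\otimes\overline H$ fails the closure hypothesis of that proposition: a centre--leaf edge followed by a leaf--leaf edge yields a non-adjacent pair, since in the star's complement the centre is adjacent only to the image of the isolated vertex. Perfection must therefore be read off from the geometry of $\Lambda$ directly, and the delicate point is the antihole bookkeeping --- pinning down $\omega(\Lambda)\le 3$ to kill the large antiholes and then eliminating $\overline{C_7}$ by hand. Once the odd-hole and odd-antihole cases are both closed, Proposition~\ref{prop:strongperfthm} gives that $\Lambda$ is perfect.
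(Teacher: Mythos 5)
Your overall strategy is sound and genuinely different from the paper's: the paper never invokes the Strong Perfect Graph Theorem for this lemma, but instead exhibits an explicit 3-colouring of $(K_r\uplus K_s)\wprod(K_{1,m}\uplus K_1)$ and verifies $\omega(X)=\chi(X)$ directly for every induced subgraph $X$ (if $X$ contains a triangle, $K_4$-freeness plus the 3-colouring give $\omega(X)=\chi(X)=3$; otherwise $X$ is bipartite). Your structural facts (i)--(iii) are correct, the bound $\omega(\Lambda)\le 3$ is correct, the single-$w$-vertex parity argument for odd holes is correct, and your diagnosis of why Proposition~\ref{prop:Cameron} cannot be applied to the decomposition $G\otimes H\cup\overline{G}\otimes\overline{H}$ is also correct --- this is precisely why the paper resorts to a direct colouring argument here. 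However, two steps as written have genuine gaps.

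First, in the two-$w$-vertex odd-hole case, your justification (``an interior arc vertex would have to avoid both parts' star layers, which exhaust $S$'') only rules out arcs with three or more interior vertices. It does not rule out an arc with exactly \emph{one} interior vertex, and so as written it does not exclude $C_5$ --- the most important odd hole to kill. The fix is one line, but it must be said: a lone arc vertex would be adjacent to both $w$-vertices, hence would lie in $N(v_1)\cap N(v_2)$, which is empty by fact (iii), since those neighbourhoods are the star layers over opposite parts. Only then do both arcs have exactly two interior vertices, forcing $C_6$.

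Second, $\overline{C_7}$ is not in fact ``dispatched in the same spirit'' by your one sentence. The contradiction you describe (three vertices forced into both parts' star layers at once) is what happens in the placement with one $w$-vertex over each part, where the three common $\overline{C_7}$-neighbours of the two $w$-vertices must lie in two disjoint sets; it says nothing about the placement with exactly one $w$-vertex (six vertices in $S$) or with two $w$-vertices over the same part, each of which needs a different argument. A uniform and cleaner route: $T$ together with the two sides of the bipartition of $\Lambda[S]$ partitions $V(\Lambda)$ into three independent sets, so every induced subgraph $X$ satisfies $\abs{V(X)}\le 3\,\alpha(X)$; since $\alpha\bigl(\overline{C_{2k+1}}\bigr)=2$, no odd antihole on seven or more vertices can occur ($7>6$). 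This observation --- which is exactly the paper's 3-colouring in disguise --- also renders your $\omega(\Lambda)\le 3$ computation unnecessary, leaving only the odd holes to be handled by your layer analysis.
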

\begin{proof}
    For brevity we denote the graph $(K_r \uplus K_s) \wprod (K_{1,m} \uplus K_1)$ by $G$.
    Furthermore, we impose the vertex labelling of Figure~\ref{fig:Kr_uplus_Ks_wprod_K1m_uplus_K1}.
    We show that $G$ satisfies the definition of a perfect graph, namely that $\omega(X) = \chi(G)$ for all induced subgraphs $X$.
    First, observe that $\chi(G)$ is 3-colourable, according to the colouring in Figure~\ref{fig:Kr_uplus_Ks_wprod_K1m_uplus_K1}.
    Now, let $z\in \{0,1\}$ and $\bar{z}$ be its binary complement and let $X$ be an induced subgraph of $G$.
    Suppose $X$ includes a vertex from $U_z \times V_{1,2}$, a vertex from $U_z \times V_0$ and a vertex from $U_{\overline{z}} \times V_{1,1}$.
    Then $X$ contains a triangle.
    From inspection of Figure~\ref{fig:Kr_uplus_Ks_wprod_K1m_uplus_K1} one sees that $G$ is $K_4$-free, so $\omega(X)=3$ in this case.
    Moreover, since $G$ has a 3-colouring, $\omega(X) = \chi(X) = 3$ as $\omega(H) \leq \chi(H)$ for any graph $H$.
    If $X$ contains no three such vertices, we see from Figure~\ref{fig:Kr_uplus_Ks_wprod_K1m_uplus_K1} that $X$ is bipartite and so $\omega(X) = \chi(X) = 2$.   
\end{proof}
\begin{figure}[H]
    \centering
    \includegraphics[width=0.75\textwidth]{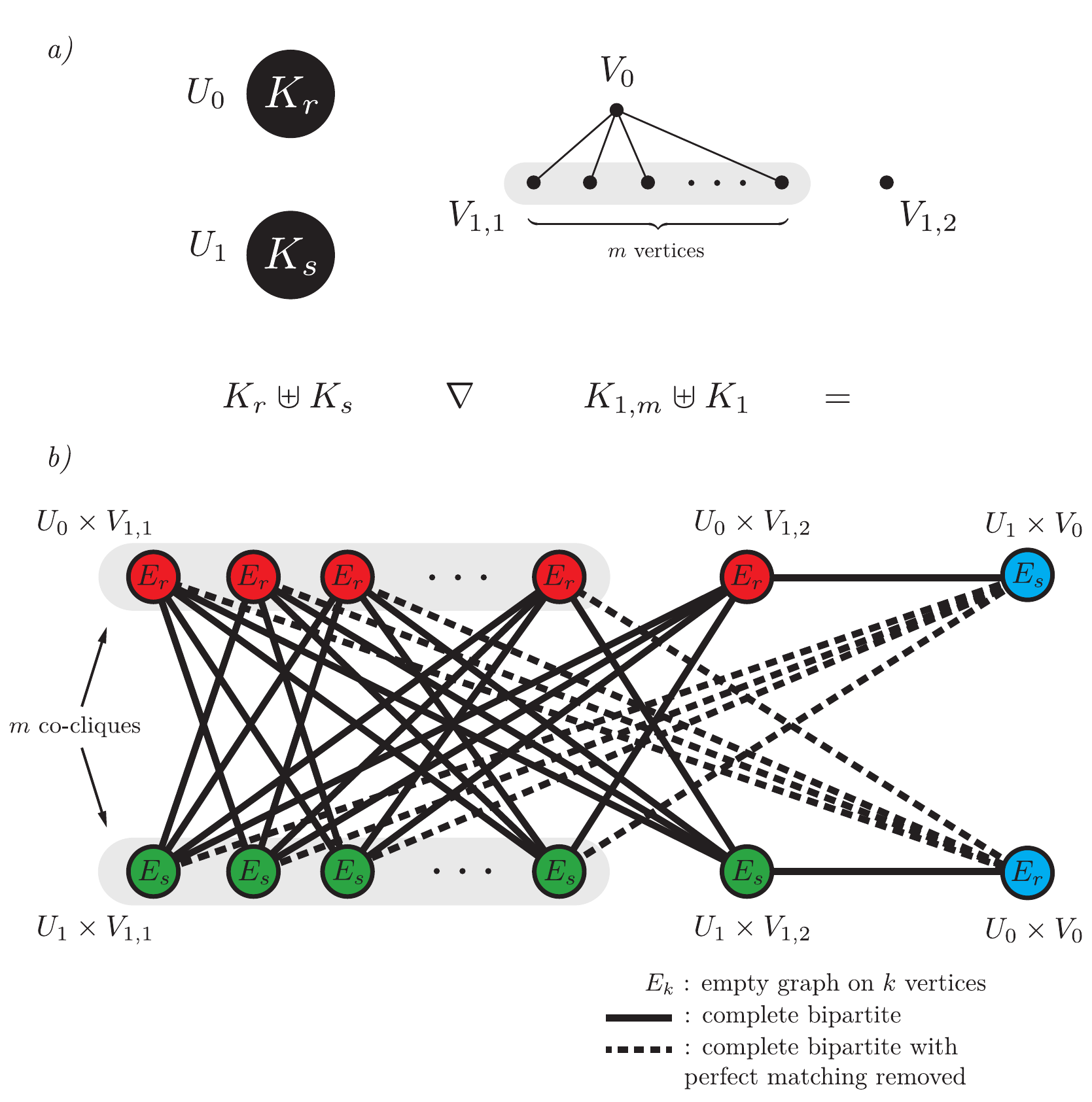}
    \caption{\emph{a)} Illustration of the graphs $K_r \uplus K_s$ and $K_{1, m} \uplus K_1$ with vertex labellings indicated.
    Illustration of the graph $(K_r \uplus K_s) \wprod (K_{1,m} \uplus K_1)$, with vertex labelling and a 3-colouring indicated.
    Each node represents an empty graph on either $r$ or $s$ vertices. A full line represents a fully bipartite graph induced over the end nodes and a dashed line represents a complete bipartite graph with a perfect matching removed.} 
    \label{fig:Kr_uplus_Ks_wprod_K1m_uplus_K1}
\end{figure} 

\begin{prop}\label{prop:twocliq_disjstrscliqs}
    Let $G = K_r \uplus K_s$ and let $H$ be a disjoint union of stars and cliques. Then, $G \wprod H$ is perfect.
\end{prop}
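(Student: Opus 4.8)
The plan is to induct by peeling off one component of $H$ at a time and gluing it on with Lemma~\ref{lem:Kr_uplus_Ks_wprod_G_uplus_H}. Write $H = C_1 \uplus \cdots \uplus C_p$, where each $C_i$ is a star or a clique, and let $b$ denote the number of the $C_i$ with $\abs{V(C_i)} \geq 2$; the remaining components are isolated vertices, i.e.\ copies of $K_1$. The two facts I would use as the ``atomic'' building blocks are that $(K_r \uplus K_s) \wprod (K_{1,m} \uplus K_1)$ is perfect, which is precisely Lemma~\ref{lem:twocliq_wprod_K1r_uplus_K1}, and that $(K_r \uplus K_s) \wprod (K_t \uplus K_1)$ is perfect, which follows from Lemma~\ref{lem:GHdisj_cliqs} since $K_t \uplus K_1$ is a disjoint union of cliques. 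Hence for \emph{every} single nontrivial component $C$ (a star $K_{1,m}$ with $m \geq 1$ or a clique $K_t$ with $t \geq 2$), the graph $(K_r \uplus K_s) \wprod (C \uplus K_1)$ is perfect.

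For the base case $b = 0$, every component of $H$ is a $K_1$, so $H$ is an empty graph and in particular a disjoint union of cliques; thus $(K_r \uplus K_s) \wprod H$ is perfect by Lemma~\ref{lem:GHdisj_cliqs}. For the inductive step I would assume the claim for all disjoint unions of stars and cliques having fewer than $b$ nontrivial components, and let $H$ have exactly $b \geq 1$ of them. Singling out one nontrivial component $C$, I write $H = C \uplus H'$, where $H'$ is again a disjoint union of stars and cliques, now with exactly $b-1$ nontrivial components. Applying Lemma~\ref{lem:Kr_uplus_Ks_wprod_G_uplus_H} in the roles $C$ and $H'$ requires that both $(K_r \uplus K_s) \wprod (C \uplus K_1)$ and $(K_r \uplus K_s) \wprod (H' \uplus K_1)$ be perfect. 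The first holds by the atomic facts above; the second holds by the inductive hypothesis, since $H' \uplus K_1$ is still a disjoint union of stars and cliques with $b-1$ nontrivial components. The lemma then yields that $(K_r \uplus K_s) \wprod (C \uplus H') = G \wprod H$ is perfect, completing the induction.

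The one point that needs care --- and the reason for inducting on the number of nontrivial components rather than on the total number of components --- is that Lemma~\ref{lem:Kr_uplus_Ks_wprod_G_uplus_H} forces an extra isolated vertex into each factor. Were I to induct on total component count, the hypothesis $(K_r \uplus K_s) \wprod (H' \uplus K_1)$ would concern a union with the \emph{same} number of components as $H$ itself, and the induction would not close. Measuring progress by the number of components with at least two vertices repairs this, because appending a $K_1$ leaves that count unchanged. The only remaining checks are the degenerate situations in which $H$ is empty, or $H'$ is empty or consists solely of isolated vertices; all of these reduce to the $b = 0$ base case handled by Lemma~\ref{lem:GHdisj_cliqs}, so no separate treatment of single-star or single-clique factors is needed.
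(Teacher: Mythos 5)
Your proposal is correct and takes essentially the same route as the paper: the paper's proof simply cites Corollary~\ref{cor:twocliq_disj_G_H} (the iterated form of the gluing Lemma~\ref{lem:Kr_uplus_Ks_wprod_G_uplus_H}) together with Lemma~\ref{lem:twocliq_wprod_K1r_uplus_K1} and Lemma~\ref{lem:GHdisj_cliqs}, which are exactly the ingredients you use. If anything, your explicit induction on the number of nontrivial components is slightly more careful than the paper's one-line proof, since Corollary~\ref{cor:twocliq_disj_G_H} is literally stated only for disjoint unions of copies of two fixed graphs, whereas your argument directly accommodates stars and cliques of varying sizes.
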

\begin{proof}
    Follows immediately from Corollary~\ref{cor:twocliq_disj_G_H}, Lemma~\ref{lem:twocliq_wprod_K1r_uplus_K1} and Lemma~\ref{lem:GHdisj_cliqs}.
\end{proof}
\begin{lem}
    \label{lem:P4wprodK1r}
    The graph $P_4 \wprod K_{1,r}$ is perfect for any $r \geq 1$.\end{lem}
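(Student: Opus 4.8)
The plan is to follow the same strategy used to prove Lemma~\ref{lem:twocliq_wprod_K1r_uplus_K1} rather than invoking Proposition~\ref{prop:Cameron}, whose composition hypothesis fails here (for $r\geq 2$ one finds a $P_4\otimes K_{1,r}$-edge followed by a $\overline{P_4}\otimes\overline{K_{1,r}}$-edge whose endpoints are non-adjacent in the product, because $P_4$ is neither a disjoint union of cliques nor complete multipartite). Instead I would verify directly that $P_4\wprod K_{1,r}$ is $K_4$-free, $3$-colourable, and free of induced odd holes, and then conclude. Indeed, if a graph is $K_4$-free then $\omega\leq 3$ on every induced subgraph; if it is also $3$-colourable then $\chi\leq 3$ on every induced subgraph; and if it has no odd hole then by Lemma~\ref{lem:triangle_free_odd_hole} every triangle-free induced subgraph is bipartite. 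Hence any induced subgraph with a triangle satisfies $\omega=\chi=3$ (via $\omega\leq\chi$), while any triangle-free induced subgraph satisfies $\omega=\chi\leq 2$; so the graph is perfect. This also disposes of odd antiholes silently, since $\overline{C_{2k+1}}$ for $k\geq 3$ needs at least four colours and $\overline{C_5}\cong C_5$ is already an odd hole.

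To set up the structure, label $P_4$ as $1-2-3-4$ and let $K_{1,r}$ have centre $0$ and leaves $\{1,\dots,r\}$. I would partition $V(P_4\wprod K_{1,r})$ into four \emph{centre} vertices $c_i=(i,0)$ and four \emph{leaf columns} $L_i=\{(i,\ell):\ell\geq 1\}$. The edges are exactly $c_i\sim(i',\ell)$ whenever $\{i,i'\}\in E(P_4)$ (from $P_4\otimes K_{1,r}$), and $(i,\ell)\sim(i',\ell')$ whenever $\{i,i'\}\in E(\overline{P_4})$ and $\ell\neq\ell'$ (from $\overline{P_4}\otimes\overline{K_{1,r}}$). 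The centres form an independent set, and the subgraph induced on the leaves is $\overline{P_4}\otimes K_r$, which is bipartite because $\overline{P_4}\cong P_4$ is bipartite. This at once gives $K_4$-freeness, since a clique uses at most one centre and, the leaf subgraph being triangle-free, at most two leaves, so $\omega=3$; and it gives a $3$-colouring, namely colour the leaves with two colours by the bipartition $P_0=L_1\cup L_2$, $P_1=L_3\cup L_4$ of $\overline{P_4}\otimes K_r$ and give every centre the third colour.

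The crux is the absence of induced odd holes. A hole avoiding the centres lies in the bipartite leaf subgraph and is therefore even. For a hole meeting the centres, each centre has both cycle-neighbours among the leaves (centres being pairwise non-adjacent), so the centres split the hole into leaf-to-leaf segments. Reading the parts $P_0,P_1$ around the cycle, every leaf-leaf edge switches parts (bipartiteness) and a centre switches parts exactly when its two flanking leaves lie in different parts; call such a centre \emph{crossing}. A parity count then shows the hole is odd if and only if the number of crossing centres is odd, so it suffices to rule out crossing centres. Here I would use that the column-adjacency graph of the leaves is $\overline{P_4}$, a path $3-1-4-2$ with endpoint columns $L_2,L_3$: a crossing centre ($c_2$ or $c_3$) has one flank $f$ in an endpoint column $L_p$ and is adjacent to \emph{all} of both its flank columns. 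Since $p$ is an endpoint, the leaf-neighbours of $f$ lie only in the unique adjacent column $L_q$ (which the centre also dominates), and its centre-neighbours are just the two centres adjacent to column $p$. Tracing the second cycle-neighbour $w$ of $f$ forces a chord: if $w$ is a leaf it lies in $L_q$ and is joined to the crossing centre, and if $w$ is the other centre adjacent to $p$ then its second flank lies in $L_p$ and is again joined to the crossing centre. Either way the induced-cycle condition is violated, so no crossing centre exists and every hole is even.

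I expect the parity bookkeeping and, above all, the chord argument excluding crossing centres to be the main obstacle; the $K_4$-freeness and $3$-colourability are routine once the four-column description is in place. Assembling the three properties through the criterion of the first paragraph then yields perfection and completes the proof.
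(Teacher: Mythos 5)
Your proposal is correct, and it takes a genuinely different route from the paper's own proof. The paper proves this lemma via the strong perfect graph theorem (Proposition~\ref{prop:strongperfthm}): it notes that every induced subgraph containing neither $(2,1)$ nor $(3,1)$ (your $c_2$, $c_3$) is bipartite, so any odd hole or antihole must pass through one of these two centres; it then excludes odd holes by arguing that the two cycle-neighbours of such a centre form a triangle with it, and excludes odd antiholes by a separate diamond argument (the centre is not the degree-2 vertex of any induced diamond, whereas every vertex of an odd antihole on $\geq 7$ vertices is). You instead run the colouring template of Lemma~\ref{lem:twocliq_wprod_K1r_uplus_K1}: $K_4$-freeness, $3$-colourability and odd-hole-freeness force $\omega(X)=\chi(X)$ on every induced subgraph $X$ (via Lemma~\ref{lem:triangle_free_odd_hole} for the triangle-free subgraphs), so antiholes never need to be mentioned. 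Both proofs rest on the same structural picture --- four pairwise non-adjacent centres, leaf columns inducing the bipartite graph $\overline{P_4}\otimes K_r$, each centre dominating its flank columns --- but your odd-hole exclusion is finer, and this is where your approach pays off: the paper's step ``Clearly, either $x_2 \in \{2\}\times V_1$ and $x_{2k+1}\in\{4\}\times V_1$ or vice versa'' glosses over the configurations where both cycle-neighbours of the centre lie in the same column, or in distinct columns with equal leaf index, in which cases they are non-adjacent and no triangle arises; your crossing/non-crossing dichotomy plus the parity count handles exactly those cases (a non-crossing centre simply contributes nothing to the parity, and any odd hole would still need some crossing centre, which your chord argument kills). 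The price is the parity bookkeeping; what you buy is a proof that is self-contained modulo Lemma~\ref{lem:triangle_free_odd_hole}, avoids the uncited antihole--diamond fact, and is, if anything, tighter than the paper's. One small point of care when writing it up: triangles through $c_2$ or $c_3$ do exist and do have a crossing centre, so state explicitly that the chord argument applies to induced cycles of length at least $4$ (where the second neighbour $w$ of $f$ is distinct from $f'$ and non-consecutive with the centre), which is all that odd-hole-freeness requires.
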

\begin{proof}
    \begin{figure}[H]
        \centering
        \includegraphics[width=0.85\textwidth]{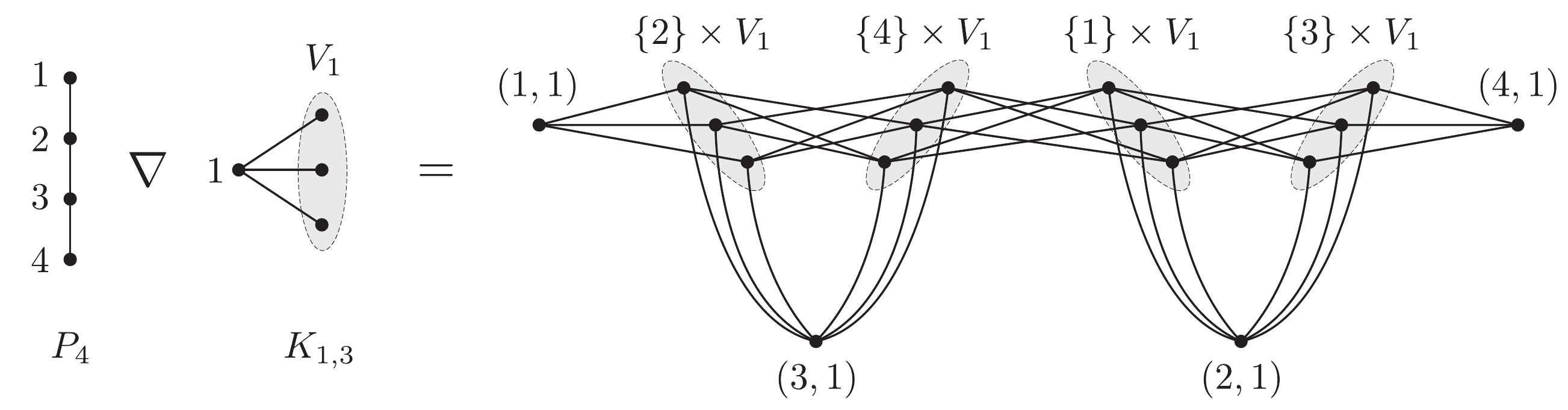}
        \caption{Illustration of the graph $P_4 \wprod K_{1,r}$, with $r=3$ for concreteness. The sets $\{i\} \times V_1$ for $i \in \{1,2,3,4\}$ each have cardinality $r$.
        The subgraphs of $P_4 \wprod K_{1,r}$ induced on the sets $\{2,4\} \times V_1$, $\{1,3\} \times V_1$ and $\{1,4\} \times V_1$ are complete bipartite graphs with a perfect matching removed.
        The vertex $(1,1)$ is connected to all vertices in $\{2\}\times V_1$.
        The vertex $(4,1)$ is connected to all vertices in $\{3\}\times V_1$.
        The vertex $(3,1)$ is connected to all vertices in $\{2\}\times V_1$ and $\{4\}\times V_1$.
        The vertex $(2,1)$ is connected to all vertices in $\{1\}\times V_1$ and $\{3\}\times V_1$.
        } 
        \label{fig:P4_wprod_K_1r}
    \end{figure}
    The case $r=1$ is trivial by Corollary~\ref{cor:GprodKn}. Impose now the vertex labelling from Figure~\ref{fig:P4_wprod_K_1r}. We have drawn $P_4 \wprod K_{1,r}$ for $r=3$ in Figure~\ref{fig:P4_wprod_K_1r}, the result readily generalises to any $r \geq 2$. We now use the strong perfect graph theorem. Observe that any induced subgraph of $P_4 \wprod K_{1,r}$ not including both vertices $(3,1)$ and $(2,1)$ is bipartite, so is perfect. Thus, any induced subgraph containing an odd hole or antihole must include either $(3,1)$ or $(2,1)$.
    Without loss of generality, consider an odd cycle $C = x_1,x_2,\ldots, x_{2k+2}$ (for $k\geq 2$) starting and beginning at vertex $(3,1)$, i.e. $x_1 = x_{2k+2} = (3,1)$.
    Clearly, either $x_2 \in \{2\}\times V_1$ and $x_{2k + 1} \in \{4\}\times V_1$ or $x_2 \in \{4\}\times V_1$ and $x_{2k + 1} \in \{2\}\times V_1$. Then, the vertices $\{x_1, x_2, x_{2k + 1}\}$ induce a triangle and so $(P_4 \wprod K_{1,r})[C]$ is not an odd hole.
    Thus, $P_4 \wprod K_{1,r}$ is odd hole-free. Furthermore, observe that the only neighbour common to $x_2$ and $x_{2k + 1}$ is $x_1$. 
    Thus, there is no diamond in $P_4 \wprod K_{1,r}$ with a degree-2 vertex at $(3,1)$. But every vertex in an odd antihole on 7 or more vertices is a degree-2 vertex in some diamond, so $(3,1)$ cannot be a vertex of an odd antihole, and so $P_4 \wprod K_{1,r}$ is odd antihole-free. 
\end{proof}

\begin{cor}\label{cor:P4wprodK1uplusKr}
    The graph $P_4 \wprod (K_1 \uplus K_r)$ is perfect for any $r \geq 1$.
\end{cor}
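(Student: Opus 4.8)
The plan is to reduce this directly to Lemma~\ref{lem:P4wprodK1r} by complementation, so that almost no new work is required. The first observation I would make is that the complement of a star is an isolated vertex together with a clique: concretely, $\overline{K_{1,r}} \cong K_1 \uplus K_r$. This is immediate from the definitions, since in $K_{1,r}$ the centre is adjacent to all $r$ leaves while the leaves are pairwise nonadjacent; passing to the complement isolates the centre and turns the leaves into a clique $K_r$.

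Next I would recall the standard fact that $P_4$ is self-complementary, i.e. $\overline{P_4} \cong P_4$. This is easily verified: labelling the path as $a$--$b$--$c$--$d$, one checks that the complement has edges $\{a,c\}$, $\{a,d\}$, $\{b,d\}$, which is again a path (on the vertex order $c,a,d,b$).

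With these two identities in hand, the corollary follows from Lemma~\ref{lem:wmp_comp_perf}, which states that $G \wprod H$ is perfect if and only if $\overline{G} \wprod \overline{H}$ is perfect. Applying this with $G = P_4$ and $H = K_{1,r}$ yields that $P_4 \wprod K_{1,r}$ is perfect if and only if $\overline{P_4} \wprod \overline{K_{1,r}} \cong P_4 \wprod (K_1 \uplus K_r)$ is perfect. Since Lemma~\ref{lem:P4wprodK1r} guarantees that $P_4 \wprod K_{1,r}$ is perfect for every $r \geq 1$, the desired conclusion is immediate.

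I do not anticipate any genuine obstacle here, as the argument is purely a complementation trick built on an already-proved lemma. The only points demanding any care are the two complement identities above; both are routine, but I would state them explicitly (perhaps citing Observation~\ref{obs:namedcomplements}-style reasoning for the star, and the self-complementarity of $P_4$) so that the single application of Lemma~\ref{lem:wmp_comp_perf} is fully justified.
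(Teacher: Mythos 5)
Your proposal is correct and is exactly the paper's argument: the paper's proof simply cites Lemma~\ref{lem:P4wprodK1r} together with Lemma~\ref{lem:wmp_comp_perf}, relying on the same two complementation facts ($\overline{P_4} \cong P_4$ and $\overline{K_{1,r}} \cong K_1 \uplus K_r$) that you spell out. Your version just makes those routine identities explicit, which the paper leaves implicit.
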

\begin{proof}
    Lemma~\ref{lem:P4wprodK1r} along with Lemma~\ref{lem:wmp_comp_perf}.
\end{proof}

\subsubsection{Full case analysis}

We have finally gathered the required ingredients to prove Theorem~\ref{thm:main_intro}.
The proof constitutes a case analysis over all pairs of finite, simple graphs, which has been split into Lemmas~\ref{lem:chunk_i}-\ref{lem:chunk_viii}.
They are tied up in the proof of Theorem~\ref{thm:main}.

In this subsection we let the binary variable $z \in \{0,1\}$ be arbitrary and $\overline{z}$ be its complement. 
We do this for brevity, so we can make statements such as ``$G_z \wprod G_{\overline{z}}$ is perfect if and only if $G_z$ has property $\mathcal{P}_A$ and $G_{\overline{z}}$ has property $\mathcal{P}_B$, for any $z \in \{0,1\}$''.
The above statement is equivalent to the statements: ``$G_0 \wprod G_1$ is perfect if and only if either: $G_0$ has property $\mathcal{P}_A$ and $G_1$ has property $\mathcal{P}_B$, or $G_1$ has property $\mathcal{P}_A$ and $G_0$ has property $\mathcal{P}_B$''.

\begin{lem}
    \label{lem:chunk_i}
    Suppose $G_z \cong K_r \uplus K_s$ for $r + s \geq 3$ and $G_{\overline{z}}$ is paw-free. Then $G_z \wprod G_{\overline{z}}$ is perfect if and only if either
    \begin{enumerate}
        \item $G_z \cong K_2 \uplus E_1$ and $G_{\overline{z}} \cong C_5$; or
        \item $G_z \cong K_m \uplus K_1$ for $m \in \mathbb{N}$ and $G_{\overline{z}} \cong P_4$; or
        \item $G_{\overline{z}} \in \{ K_m, K_{m,n} \}$ for $m,n \in \mathbb{N}$; or
        \item $G_{\overline{z}}$ is a disjoint union of cliques and stars with two or more connected components.
    \end{enumerate} 
\end{lem}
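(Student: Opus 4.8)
The plan is to prove the two directions separately, with essentially all the work concentrated in the forward implication. For $(\Leftarrow)$, each of the four cases is handled by a result already in hand: Case~1 is Corollary~\ref{cor:C5prodK2uE1}; Case~2 is Corollary~\ref{cor:P4wprodK1uplusKr}; Case~3 follows from Corollary~\ref{cor:GprodKn} when $G_{\overline{z}} \cong K_m$ (noting that a disjoint union of cliques is $(\text{odd hole, paw})$-free) and from Lemma~\ref{lem:KruKs_Kmn} when $G_{\overline{z}} \cong K_{m,n}$; and Case~4 is exactly Proposition~\ref{prop:twocliq_disjstrscliqs}. So it remains to prove $(\Rightarrow)$.

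The key structural observation driving the forward direction is that $r+s \geq 3$ forces at least one clique of $G_z$ to contain two vertices, so $G_z$ always has an induced $K_2 \uplus E_1$. Using the fact that imperfection of a product is inherited by any pair of supergraphs (the observation preceding the augment definition), I can convert every known imperfect product of the shape $(K_2 \uplus E_1) \wprod Y$ into a forbidden induced subgraph $Y$ for $G_{\overline{z}}$. Concretely, Lemma~\ref{lem:K2uE1_products_nonperf} forces $G_{\overline{z}}$ to be $(K_{1,1,2}, P_4 \uplus E_1, K_{2,2} \uplus E_1, P_5)$-free, while Lemmas~\ref{lem:C5trifree_augments_C5_K2uE1} and~\ref{lem:Bipartite_augments_P4_K2uE1} forbid $G_{\overline{z}}$ from containing any triangle-free augment of $C_5$ or any bipartite augment of $P_4$ as an induced subgraph. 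Together with the paw-freeness hypothesis and Lemma~\ref{lem:paw_free_graphs}, this tells me that every component of $G_{\overline{z}}$ is triangle-free or complete multipartite, and (via $K_{1,1,2}$-freeness and Lemma~\ref{lem:diamondfree_comp_multip}) that every complete multipartite component is in fact a clique or complete bipartite.

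I would then split on whether $G_{\overline{z}}$ is connected. In the connected case, $G_{\overline{z}}$ is either complete multipartite---hence a clique or complete bipartite $K_{m,n}$, landing in Case~3---or triangle-free. If it is triangle-free and nonbipartite it contains an induced odd hole (Lemma~\ref{lem:triangle_free_odd_hole}); $P_5$-freeness kills every odd hole of length $\geq 7$, so the hole is a $C_5$, and forbidding triangle-free augments of $C_5$ pins $G_{\overline{z}}$ down to exactly $C_5$. If it is triangle-free and bipartite but not complete bipartite, Lemma~\ref{lem:compbipP4} gives an induced $P_4$, and forbidding bipartite augments of $P_4$ pins it down to exactly $P_4$. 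The final ingredient in these two subcases is to read off the constraint on $G_z$: Lemma~\ref{lem:C5_prods} (which makes $C_5 \wprod K_3$ and $C_5 \wprod 2K_2$ imperfect) forces $G_z \cong K_2 \uplus E_1$ in the $C_5$ subcase (Case~1), and Lemma~\ref{lem:P4_prods} (which makes $P_4 \wprod 2K_2$ imperfect) forces $G_z \cong K_m \uplus K_1$ in the $P_4$ subcase (Case~2).

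The disconnected case is where I expect the real obstacle to lie, and the recurring trick is that a second component always supplies an extra isolated vertex to complete a disjoint-union obstruction. If some component contained an induced $C_5$ it would be triangle-free, and adjoining a vertex from another component yields the triangle-free augment $C_5 \uplus E_1$, which is forbidden; hence no component contains $C_5$, and with $P_5$-freeness every triangle-free component is bipartite. A bipartite component that is not complete bipartite would contain $P_4$, and together with an outside vertex would give a forbidden $P_4 \uplus E_1$; hence every such component is complete bipartite, and by diamond-freeness each is a clique or a star---a star rather than a general $K_{m,n}$, because any $K_{2,2}$ plus an outside vertex is the forbidden $K_{2,2} \uplus E_1$. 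This leaves $G_{\overline{z}}$ a disjoint union of cliques and stars on two or more components, which is Case~4, and no extra constraint on $G_z$ arises (consistent with Proposition~\ref{prop:twocliq_disjstrscliqs}). The care required is purely in checking that each obstruction is genuinely realised as an \emph{induced} subgraph of $G_{\overline{z}}$ and that the component-structure cases are exhaustive; once that bookkeeping is done, the argument closes.
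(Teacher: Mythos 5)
Your proposal is correct and follows essentially the same route as the paper's proof: both exploit the induced $K_2 \uplus E_1$ in $G_z$ to turn Lemma~\ref{lem:K2uE1_products_nonperf} and the augment Lemmas~\ref{lem:C5trifree_augments_C5_K2uE1} and~\ref{lem:Bipartite_augments_P4_K2uE1} into forbidden induced subgraphs of $G_{\overline{z}}$, then use Lemmas~\ref{lem:paw_free_graphs}, \ref{lem:compbipP4}, \ref{lem:triangle_free_odd_hole} and~\ref{lem:diamondfree_comp_multip} to pin each component down to a clique, star, complete bipartite graph, $P_4$ or $C_5$, with Lemmas~\ref{lem:C5_prods} and~\ref{lem:P4_prods} supplying the matching constraints on $G_z$, and the same perfection results (Corollaries~\ref{cor:C5prodK2uE1}, \ref{cor:P4wprodK1uplusKr}, \ref{cor:GprodKn}, Lemmas~\ref{lem:GHdisj_cliqs}, \ref{lem:KruKs_Kmn}, Proposition~\ref{prop:twocliq_disjstrscliqs}) closing the converse. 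The only difference is organizational (you split on connectivity of $G_{\overline{z}}$ where the paper splits on the type of its components), which does not change the substance of the argument.
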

\begin{proof}
   $G_{\overline{z}}$ is paw-free, so by Lemma~\ref{lem:paw_free_graphs}, it is a disjoint union of complete multipartite and triangle-free graphs. Moreover, by assumption $G_z$ has an induced $K_2 \uplus E_1$. 

   First, suppose that $G_{\overline{z}}$ has a triangle-free component $X$. Moreover, suppose that $X$ is bipartite and not complete. Then, by Lemma~\ref{lem:compbipP4} $X$ has an induced $P_4$. Lemma~\ref{lem:Bipartite_augments_P4_K2uE1} states that the weak modular product of a bipartite augment of $P_4$ with $K_2 \uplus E_1$ is not perfect, so if $G_{\overline{z}}\not\cong P_4$, $G_z \wprod G_{\overline{z}}$ is not perfect.\
   Suppose $G_{\overline{z}} \cong P_4$.
   If $r=1$ or $s=1$, then $G_z \wprod G_{\overline{z}} \cong (K_1 \uplus K_n) \wprod P_4$ for some $n$ and so is perfect by Corollary~\ref{cor:P4wprodK1uplusKr}, giving case (2). If $r \geq 2$ and $s \geq 2$, then $G_{z}$ has an induced $2K_2$ and $G_z \wprod G_{\overline{z}}$ is not perfect by Lemma~\ref{lem:P4_prods}.

   Suppose now that $X$ is nonbipartite, so has an odd hole by Lemma~\ref{lem:triangle_free_odd_hole}. Moreover, suppose the largest hole has length greater than 6. Then, $X$ has an induced $P_5$ and by Lemma~\ref{lem:K2uE1_products_nonperf} $G_z \wprod G_{\overline{z}}$ is not perfect. Now suppose that the largest odd hole in $X$ is a $C_5$. If $G_z \not\cong K_2\uplus E_1$ then $G_z$ contains either $2K_2$ or $K_3$ by assumption. Thus,  $G_z \wprod G_{\overline{z}}$ is not perfect by Lemma~\ref{lem:C5_prods}.
   If $G_z \cong (K_2 \uplus E_1)$, by Corollary~\ref{cor:C5prodK2uE1} and Lemma~\ref{lem:C5trifree_augments_C5_K2uE1} $G_z \wprod G_{\overline{z}}$ is perfect if and only if $G_{\overline{z}} \cong C_5$, giving case (1).

   Now suppose $G_{\overline{z}}$ is a disjoint union of complete multipartite graphs. If $G_{\overline{z}}$ has an induced diamond then $G_z \wprod G_{\overline{z}}$ is not perfect by Lemma~\ref{lem:K2uE1_products_nonperf}. By Lemma~\ref{lem:diamondfree_comp_multip}, a complete multipartite graph is diamond-free if it is a disjoint union of cliques and complete bipartites. Suppose $G_{\overline{z}}$ is diamond-free. If $G_{\overline{z}}$ is connected, then $G_{\overline{z}} \in \{ K_m, K_{m,n} \}$ for $m,n \in \mathbb{N}$ and $G_z \wprod G_{\overline{z}}$ is perfect by Lemmas~\ref{lem:GHdisj_cliqs}~and~\ref{lem:KruKs_Kmn}, giving case (3). Now suppose $G_{\overline{z}}$ has two or more connected components. If $G_{\overline{z}}$ has an induced $K_{2,2}$, then $G_z \wprod G_{\overline{z}}$ is not perfect by Lemma~\ref{lem:K2uE1_products_nonperf}; else $G_{\overline{z}}$ is a disjoint union of stars and cliques, in which case $G_z \wprod G_{\overline{z}}$ is perfect by Proposition~\ref{prop:twocliq_disjstrscliqs}, giving case (4).
\end{proof}

\begin{lem}
    \label{lem:chunk_ii}
    Suppose $G_z$ is a disjoint union of cliques with $k$ connected components, where $k \in \mathbb{N} \setminus \{ 2 \}$. Moreover, suppose $G_{\overline{z}}$ has an induced $P_3$. Then $G_z \wprod G_{\overline{z}}$ is perfect if and only if either
    \begin{enumerate}
        \item $G_z \in \{K_1, K_2\}$; or
        \item $G_z \cong K_r$ for $r \in \mathbb{N}$ and $G_{\overline{z}}$ is $(\text{odd hole, paw})$-free; or
        \item $G_z \cong E_k$ and $G_{\overline{z}}$ is $(\text{odd antihole, co-paw})$-free
    \end{enumerate}
\end{lem}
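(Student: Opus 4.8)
The plan is to split the argument on the number of components $k$, which by hypothesis is either $1$ or at least $3$, and to reduce each sub-case to the already-established Corollary~\ref{cor:GprodKn} describing exactly when $G \wprod K_n$ is perfect.

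First I would dispose of $k = 1$, where $G_z \cong K_r$ for some $r \in \mathbb{N}$. Since $G_z \wprod G_{\overline{z}} = G_{\overline{z}} \wprod K_r$, Corollary~\ref{cor:GprodKn} applies verbatim: the product is perfect if and only if $r \in \{1,2\}$ or $G_{\overline{z}}$ is $(\text{odd hole, paw})$-free. The first possibility is exactly $G_z \in \{K_1, K_2\}$, which is case (1), and the second is case (2); conversely either condition forces perfection. This settles the equivalence when $k = 1$.

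The substantive work is the case $k \geq 3$, where cases (1) and (2) cannot hold as they require $k = 1$, so I must show that perfection is equivalent to case (3). For the forward direction, suppose $G_z \wprod G_{\overline{z}}$ is perfect. I would first argue that every clique of $G_z$ is a singleton. If some component were a clique on $\geq 2$ vertices, then taking that edge together with one vertex from each of two further components (possible precisely because $k \geq 3$) exhibits an induced $K_2 \uplus E_2$ in $G_z$; combined with the induced $P_3$ guaranteed in $G_{\overline{z}}$ and the induced-subgraph monotonicity of imperfection recorded in the Observation preceding Lemma~\ref{lem:C5trifree_augments_C5_P3}, Lemma~\ref{lem:P3_prods} would force $G_z \wprod G_{\overline{z}}$ to be imperfect, a contradiction. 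Hence $G_z \cong E_k$.

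With $G_z \cong E_k$ established, I would pass to complements: by Lemma~\ref{lem:wmp_comp_perf} the product is perfect if and only if $\overline{E_k} \wprod \overline{G_{\overline{z}}} = K_k \wprod \overline{G_{\overline{z}}}$ is perfect, and since $k \geq 3$, Corollary~\ref{cor:GprodKn} makes this equivalent to $\overline{G_{\overline{z}}}$ being $(\text{odd hole, paw})$-free, that is, to $G_{\overline{z}}$ being $(\text{odd antihole, co-paw})$-free. This is exactly case (3), and the same chain of equivalences read backwards yields the converse, so both directions are handled simultaneously. The only point requiring care is the extraction of the induced $K_2 \uplus E_2$, which is the sole place the hypothesis $k \neq 2$ is used: with just two components one can guarantee only an induced $K_2 \uplus E_1$, and that configuration is instead treated in Lemma~\ref{lem:chunk_i}. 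Beyond this bookkeeping I do not anticipate a genuine obstacle, as the proof is a direct assembly of Corollary~\ref{cor:GprodKn}, Lemma~\ref{lem:P3_prods}, and Lemma~\ref{lem:wmp_comp_perf}.
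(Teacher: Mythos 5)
Your proposal is correct and follows essentially the same route as the paper: split on $k$, apply Corollary~\ref{cor:GprodKn} directly for $k=1$, extract an induced $K_2 \uplus E_2$ to invoke Lemma~\ref{lem:P3_prods} when $k \geq 3$ and $G_z$ is nonempty, and handle $G_z \cong E_k$ via complementation with Lemma~\ref{lem:wmp_comp_perf} and Corollary~\ref{cor:GprodKn}. The only difference is that you spell out the monotonicity step and the complementation chain in more detail than the paper's terse write-up.
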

\begin{proof}
    Suppose $k\geq 3$. If $G_z$ is empty we get case (3) from Corollary~\ref{cor:GprodKn} and Lemma~\ref{lem:wmp_comp_perf}.
    Assume $G_z$ is nonempty. Then, $G_z$ has an induced $K_2 \uplus E_2$ and $G_z \wprod G_{\overline{z}}$ is not perfect by Lemma~\ref{lem:P3_prods}. Now assume $k=1$, i.e. $G_z \cong K_r$ for some $r \in \mathbb{N}$. Then we get cases (1) and (2) from Corollary~\ref{cor:GprodKn}.  
\end{proof}

\begin{lem}
    \label{lem:chunk_iii_i}
    Suppose $G_z$ is a disjoint union of complete multipartite graphs and triangle-free graphs. Moreover, suppose that $G_{\overline{z}}$ is connected, $(P_4, \text{cricket, dart, hourglass})$-free and contains an induced paw.
    Then $G_z \wprod G_{\overline{z}}$ is perfect if and only if $G_z \cong K_{m,n}$.
\end{lem}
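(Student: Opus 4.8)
The plan is to treat the two directions separately; the reverse implication is short and the forward one carries the weight, with the induced paw in $G_{\overline{z}}$ doing all the work. For $(\Leftarrow)$, suppose $G_z \cong K_{m,n}$. By Lemma~\ref{lem:wmp_comp_perf}, $G_z \wprod G_{\overline{z}}$ is perfect if and only if $\overline{G_z} \wprod \overline{G_{\overline{z}}}$ is perfect. Now $\overline{K_{m,n}} = K_m \uplus K_n$, and since $G_{\overline{z}}$ is connected and $(P_4, \text{cricket, dart, hourglass})$-free, Lemma~\ref{lem:disjunstrscliqcpml} makes $\overline{G_{\overline{z}}}$ a disjoint union of cliques and stars. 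Hence $\overline{G_z} \wprod \overline{G_{\overline{z}}}$ is a product of the form $(K_m \uplus K_n) \wprod (\text{disjoint union of cliques and stars})$, which is perfect by Proposition~\ref{prop:twocliq_disjstrscliqs}. The induced-paw hypothesis plays no role in this direction.

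For $(\Rightarrow)$ I would assume $G_z \wprod G_{\overline{z}}$ is perfect and exploit the induced paw $Y \subseteq G_{\overline{z}}$, together with the induced-subgraph observation of the previous subsection: if $X \wprod Y$ is imperfect for some induced $X \subseteq G_z$ and $Y \subseteq G_{\overline{z}}$, then $G_z \wprod G_{\overline{z}}$ is imperfect. First I would show $G_z$ must be complete multipartite. If $G_z$ contained an induced $K_2 \uplus E_1$, then $(K_2 \uplus E_1) \wprod Y$ would be imperfect by Lemma~\ref{lem:K2uE1_products_nonperf} (the paw $Y$ appears in its list), contradicting perfection. Thus $G_z$ is $(K_2 \uplus E_1)$-free, and so complete multipartite by Corollary~\ref{cor:compmulti_K2uE1free}.

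Next I would pin down the number of partite sets. A complete multipartite graph with three or more parts contains an induced $K_3$ (take one vertex from each of three parts). Since a paw is not $(\text{odd hole, paw})$-free, Corollary~\ref{cor:GprodKn} gives that $Y \wprod K_3$ is imperfect; as $K_3 \subseteq G_z$ and $Y \subseteq G_{\overline{z}}$, this would again contradict perfection. Therefore $G_z$ is complete multipartite with at most two parts, i.e. $G_z \cong K_{m,n}$. The only loose end is the edgeless possibility $G_z \cong E_n$: this is a disjoint union of copies of $K_1$, hence a disjoint union of cliques, and is already disposed of by Lemmas~\ref{lem:chunk_i} and \ref{lem:chunk_ii} in the surrounding case analysis, so it does not recur here.

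The hard part is the bookkeeping in the forward direction: the two imperfectness facts $(K_2 \uplus E_1) \wprod Y$ and $K_3 \wprod Y$ must between them exclude every $G_z$ that is not complete bipartite, and this hinges on the exact equivalence between $(K_2 \uplus E_1)$-freeness and complete multipartiteness (Corollary~\ref{cor:compmulti_K2uE1free}) and on recognising that having $\geq 3$ parts is witnessed by an induced triangle. The degenerate edgeless case is the single subtlety not settled by imperfection of the product itself, so I would take care to discharge it by appeal to the earlier chunks of the case analysis rather than by exhibiting an offending odd hole or antihole.
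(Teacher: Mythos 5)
Your reverse direction is exactly the paper's: take complements, apply Lemma~\ref{lem:disjunstrscliqcpml} and Lemma~\ref{lem:wmp_comp_perf}, and conclude with Proposition~\ref{prop:twocliq_disjstrscliqs}. In the forward direction you and the paper agree on the first move (an induced $K_2 \uplus E_1$ in $G_z$ together with the paw in $G_{\overline{z}}$ kills perfection via Lemma~\ref{lem:K2uE1_products_nonperf}, so $G_z$ is complete multipartite by Corollary~\ref{cor:compmulti_K2uE1free}), but then you diverge usefully. The paper splits on diamonds: an induced diamond gives imperfection via $(K_2 \uplus E_1) \wprod K_{1,1,2}$, a diamond-free complete multipartite graph is a clique or complete bipartite by Lemma~\ref{lem:diamondfree_comp_multip}, and the clique case is killed by Corollary~\ref{cor:GprodKn}. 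You instead observe that three or more parts yield an induced $K_3$, and $K_3 \wprod Y$ is imperfect by Corollary~\ref{cor:GprodKn} since the paw is not paw-free. Your single triangle argument is valid, subsumes both of the paper's imperfection cases (diamond and clique), and even sidesteps a small inaccuracy in the paper, whose clique step as written is false for $K_1$ and $K_2$.

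However, your treatment of the edgeless case is not a proof step, and this is a genuine gap --- though one the statement itself forces. If $G_z \cong E_n$, the hypotheses of the lemma are satisfied ($E_n$ is triangle-free), and the product is in fact perfect: by Lemma~\ref{lem:wmp_comp_perf}, $E_n \wprod G_{\overline{z}}$ is perfect iff $K_n \wprod \overline{G_{\overline{z}}}$ is, and $\overline{G_{\overline{z}}}$ is a disjoint union of cliques and stars by Lemma~\ref{lem:disjunstrscliqcpml}, hence $(\text{odd hole, paw})$-free, so Corollary~\ref{cor:GprodKn} gives perfection. Since $E_n \not\cong K_{m,n}$, the forward implication is simply false for this $G_z$; no appeal to Lemmas~\ref{lem:chunk_i} and~\ref{lem:chunk_ii} (which concern different hypotheses, not this statement) can repair it. What is actually needed is an additional hypothesis on the lemma, e.g., that $G_z$ contains an induced $P_3$, which does hold at the one point where the lemma is invoked in the proof of Theorem~\ref{thm:main}; you should flag this as a correction to the statement rather than wave the case off. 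In fairness, the paper's own proof has the same blind spot, hidden in inconsistent conventions: Corollary~\ref{cor:compmulti_K2uE1free} is true only if $E_n$ counts as complete multipartite, while Lemma~\ref{lem:diamondfree_comp_multip} is true only if it does not, so $E_n$ silently falls through the paper's case analysis as well --- your write-up at least surfaces the problem.
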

\begin{proof}
    Suppose $G_{\bar{z}}$ has an induced $K_2 \uplus E_1$. Then, by Lemma~\ref{lem:K2uE1_products_nonperf} $G_z \wprod G_{\overline{z}}$ is not perfect.
    Now, assume $G_z$ is $(K_2 \uplus E_1)$-free. 
    By Corollary~\ref{cor:compmulti_K2uE1free}, a $(K_2 \uplus E_1)$-free graph is complete multipartite. Moreover, assume that $G_z$ has an induced diamond. $G_z \wprod G_{\overline{z}}$ contains $Y \wprod K_{1,1,2}$, so is not perfect by Lemma~\ref{lem:K2uE1_products_nonperf} as the paw contains $K_2 \uplus E_1$ as an induced subgraph. We now assume $G_z$ is diamond-free, so is a clique or complete bipartite by Lemma~\ref{lem:diamondfree_comp_multip}.
    If $G_z$ is a clique, then from Corollary~\ref{cor:GprodKn} $G_z \wprod G_{\overline{z}}$ is not perfect. If $G_z$ is complete bipartite then $G_z \wprod G_{\overline{z}}$ is perfect, from taking complements and using Proposition~\ref{prop:twocliq_disjstrscliqs} along with Lemmas~\ref{lem:disjunstrscliqcpml}~and~\ref{lem:wmp_comp_perf}.
\end{proof}

\begin{lem}
    \label{lem:chunk_iii_ii}
    Suppose $G_z$ is a disjoint union of complete multipartite graphs and triangle-free graphs, containing an induced $P_4$. Moreover, suppose that $G_{\overline{z}}$ is complete multipartite and contains an induced $P_3$.
    Then $G_z \wprod G_{\overline{z}}$ is perfect if and only if either
    \begin{enumerate}
        \item $G_z \cong C_5$, $G_{\overline{z}} \in \{ P_3, K_2 \}$; or
        \item $G_z \cong P_4$ , $G_{\overline{z}} \in \{ K_n, K_{1,n} \}$.
    \end{enumerate}
\end{lem}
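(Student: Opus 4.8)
The plan is to prove both directions, the ``if'' direction by direct citation and the ``only if'' direction by a structural case analysis resting on one key lever: $P_3 = K_{1,2}$ is an induced subgraph of every star $K_{1,n}$ with $n \geq 2$, so imperfection of $G_z \wprod P_3$ propagates (via the induced-subproduct principle) to $G_z \wprod K_{1,n}$. For the ``if'' direction I would simply note that $C_5 \wprod P_3$ is perfect by Lemma~\ref{lem:C5perf_prods}, that $P_4 \wprod K_{1,n}$ is perfect by Lemma~\ref{lem:P4wprodK1r}, that $P_4 \wprod K_n$ is perfect by Corollary~\ref{cor:GprodKn} (as $P_4$ is $(\text{odd hole, paw})$-free), and that $C_5 \wprod K_2 = C_5 \otimes K_2$ is perfect by Proposition~\ref{prop:Ravindra} since $K_2$ is bipartite. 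The entries $K_2$ and $K_n$ are the $P_3$-free degenerate boundary that is strictly excluded by the standing hypothesis that $G_{\overline z}$ contains an induced $P_3$; they are listed because the associated products are perfect and are reused when the chunks are assembled for Theorem~\ref{thm:main_intro}.

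For the ``only if'' direction I would first pin down $G_{\overline z}$. A complete multipartite graph containing an induced $P_3$ has a part of size $\geq 2$; if it is moreover not a star, then either it has $\geq 3$ parts (take two vertices of the large part and one vertex from each of two other parts to get an induced diamond $K_{1,1,2}$) or it is complete bipartite with both sides of size $\geq 2$ (giving an induced $K_{2,2}$). Since $G_z$ contains an induced $P_4$, in either case $G_z \wprod G_{\overline z}$ contains $P_4 \wprod K_{1,1,2}$ or $P_4 \wprod K_{2,2}$ as an induced subproduct, both imperfect by Lemma~\ref{lem:P4_prods}. Hence perfection forces $G_{\overline z} \cong K_{1,n}$ for some $n \geq 2$.

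Next I would determine $G_z$, beginning with the case $G_{\overline z} = P_3$. The induced $P_4$ of $G_z$ must lie in a triangle-free component $X$, since complete multipartite components are $P_4$-free. If $X$ is bipartite, then because $G_z \not\cong P_4$ there is a vertex outside the chosen $P_4$, and adjoining it produces an induced bipartite augment of $P_4$ (whether it lies in $X$ or in a separate component), so $G_z \wprod P_3$ is imperfect by Lemma~\ref{lem:Bipartite_augments_P4_P3}. If $X$ is nonbipartite, Lemma~\ref{lem:triangle_free_odd_hole} gives an induced odd hole of length $\geq 5$: a hole of length $\geq 7$ contains an induced $P_5$, killing the product by Lemma~\ref{lem:P3_prods}, while an induced $C_5$ forces $G_z \cong C_5$, for otherwise adjoining any further vertex yields a triangle-free augment of $C_5$ and imperfection by Lemma~\ref{lem:C5trifree_augments_C5_P3}. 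This shows $G_z \wprod P_3$ is perfect iff $G_z \in \{P_4, C_5\}$. I then lift to general $n$: if $G_z \notin \{P_4, C_5\}$ the subproduct $G_z \wprod P_3$ is already imperfect; if $G_z \cong C_5$, any $n \geq 3$ is killed by $C_5 \wprod K_{1,3}$ through Lemma~\ref{lem:C5_prods}, leaving $n = 2$, i.e. $G_{\overline z} \cong P_3$ (case 1); and if $G_z \cong P_4$, then $P_4 \wprod K_{1,n}$ is perfect for every $n$ by Lemma~\ref{lem:P4wprodK1r} (case 2).

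The main obstacle I anticipate is the bookkeeping in the $G_z$-analysis: verifying that ``$G_z$ properly contains $P_4$ (resp.\ $C_5$)'' always produces a genuine induced bipartite augment of $P_4$ (resp.\ triangle-free augment of $C_5$), uniformly over whether the extra vertex sits in the same component as the $P_4$/$C_5$ or in a different one, so that Lemmas~\ref{lem:Bipartite_augments_P4_P3} and~\ref{lem:C5trifree_augments_C5_P3} apply verbatim. Once that is in place, the rest is a routine assembly of the imperfect-product lemmas with the induced-subproduct principle and the strong perfect graph theorem (Proposition~\ref{prop:strongperfthm}).
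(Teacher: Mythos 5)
Your proposal is correct and rests on essentially the same machinery as the paper's proof: the same catalogue of imperfect products (Lemmas~\ref{lem:P4_prods}, \ref{lem:P3_prods}, \ref{lem:C5_prods}, \ref{lem:Bipartite_augments_P4_P3}, \ref{lem:C5trifree_augments_C5_P3}) and perfect products (Lemmas~\ref{lem:C5perf_prods}, \ref{lem:P4wprodK1r}, Corollary~\ref{cor:GprodKn}), glued together by the induced-subproduct observation, and your handling of the augment bookkeeping (extra vertex inside or outside the $P_4$/$C_5$ component) is sound. The only difference is organisational rather than substantive: you first force $G_{\overline{z}}$ to be a star via the diamond/$K_{2,2}$ dichotomy and then classify $G_z$ against $P_3$, whereas the paper branches first on whether the $P_4$-bearing component of $G_z$ is bipartite and repeats the diamond/$K_{2,2}$/star analysis of $G_{\overline{z}}$ within each branch.
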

\begin{proof}
    We denote by $X$ a component of $G_z$ containing an induced $P_4$. Suppose $X$ is bipartite. By Lemma~\ref{lem:Bipartite_augments_P4_P3}, if $G_z \not\cong P_4$ $G_z \wprod G_{\overline{z}}$ is not perfect. We thus assume $G_z \cong P_4$. If $G_{\overline{z}}$ has an induced diamond, $G_z \wprod G_{\overline{z}}$ is not perfect by Lemma~\ref{lem:P4_prods}. In accordance with Lemma~\ref{lem:diamondfree_comp_multip}, we thus let $G_{\overline{z}}$ be a clique or complete bipartite. If $G_{\overline{z}}$ is a clique $G_z \wprod G_{\overline{z}}$ is perfect by Corollary~\ref{cor:GprodKn}. If $G_{\overline{z}}$ is complete bipartite it either contains $K_{2,2}$ or is a star. In the former case $G_z \wprod G_{\overline{z}}$ is not perfect by Lemma~\ref{lem:P4_prods}; in the latter case is perfect by Lemma~\ref{lem:P4wprodK1r}. This gives us case (2).

    We now assume that $X$ is nonbipartite, so by Lemma~\ref{lem:triangle_free_odd_hole} $X$ contains an odd hole. If the largest odd hole in $X$ has length greater than 6, then $X$ has an induced $P_5$ and $G_z \wprod G_{\overline{z}}$ is not perfect by Lemma~\ref{lem:P3_prods}. 
    We thus assume $X$ contains $C_5$ as an induced subgraph. By Lemma~\ref{lem:C5trifree_augments_C5_P3}, if $G_z \not\cong C_5$, $G_z \wprod G_{\overline{z}}$ is not perfect. Now let $G_z \cong C_5$, which contains $P_4$ as an induced subgraph. Recall that $G_{\overline{z}}$ is complete multipartite. If $G_{\overline{z}}$ has an induced diamond, $G_z \wprod G_{\overline{z}}$ is not perfect by Lemma~\ref{lem:P4_prods}. With regard to Lemma~\ref{lem:diamondfree_comp_multip}, we thus let $G_{\overline{z}}$ be complete bipartite since $G_{\overline{z}}$ being a clique contradicts Lemma~\ref{lem:p3free}.
    Hence, $G_{\overline{z}}$ either contains $K_{2,2}$ or is a star. In the former case $G_z \wprod G_{\overline{z}}$ is not perfect by Lemma~\ref{lem:P4_prods}. In the latter, if $G_{\overline{z}} \cong K_{1,r}$ for $r \geq 3$ then $G_z \wprod G_{\overline{z}}$ is not perfect by Lemma~\ref{lem:C5_prods}. $C_5 \wprod K_{1,2}$ is perfect by Lemma~\ref{lem:C5perf_prods} and we have case (1).
\end{proof}

\begin{lem}
    \label{lem:chunk_iii_iii}
    Suppose $G_z$ is a disjoint union of complete multipartite graphs. Moreover, suppose that $G_{\overline{z}}$ is complete multipartite. Then $G_z \wprod G_{\overline{z}}$ is perfect if and only if either
    \begin{enumerate}
        \item $G_{\overline z} \cong K_n$
        \item $G_z \cong K_r \uplus K_s$, $ G_{\overline{z}} \cong K_{m,n}$; or
        \item $G_z$ and $G_{\overline{z}}$ are complete multipartite; or
    \end{enumerate}
\end{lem}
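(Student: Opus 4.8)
The plan is to prove both directions, handling ``if'' by direct appeal to the perfection results already established and ``only if'' by contraposition, in each case exhibiting a forbidden induced subproduct.

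For the ``if'' direction I would treat the three cases separately. In case~1, $G_{\overline{z}} \cong K_n$ and $G_z \wprod K_n = G_z \otimes K_n$; since every connected component of $G_z$ is complete multipartite, $G_z$ is paw-free by Lemma~\ref{lem:paw_free_graphs} and odd-hole-free (any odd hole is connected, so would lie in a single component, but a complete multipartite graph has no induced odd hole, as an odd hole of length $\geq 5$ contains an induced $K_2 \uplus E_1$ and hence is not complete multipartite by Corollary~\ref{cor:compmulti_K2uE1free}). Thus $G_z$ is $(\text{odd hole, paw})$-free and $G_z \wprod K_n$ is perfect for every $n$ by Corollary~\ref{cor:GprodKn}. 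Case~2 is immediate from Lemma~\ref{lem:KruKs_Kmn} and case~3 from Corollary~\ref{cor:completemultiprodperf}.

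For the ``only if'' direction I would prove the contrapositive. Negating the disjunction, and using that $G_{\overline{z}}$ is complete multipartite throughout, the hypotheses become: (i) $G_{\overline{z}}$ is not a clique (negation of case~1), so it contains an induced $P_3$; (ii) $G_z$ is not complete multipartite (negation of case~3), so by Corollary~\ref{cor:compmulti_K2uE1free} it contains an induced $K_2 \uplus E_1$; and (iii) it is not the case that both $G_z \cong K_r \uplus K_s$ and $G_{\overline{z}} \cong K_{m,n}$ (negation of case~2). I would then split on~(iii). If $G_{\overline{z}}$ is not complete bipartite, then being a non-clique complete multipartite graph it is not diamond-free, so by Lemma~\ref{lem:diamondfree_comp_multip} it has an induced diamond $K_{1,1,2}$; together with the induced $K_2 \uplus E_1$ in $G_z$, the product contains $(K_2 \uplus E_1) \wprod K_{1,1,2}$, which is imperfect by Lemma~\ref{lem:K2uE1_products_nonperf}. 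Otherwise $G_{\overline{z}} \cong K_{m,n}$ and $G_z \not\cong K_r \uplus K_s$.

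In this last case the main work is a structural claim: \emph{a disjoint union of complete multipartite graphs that contains an induced $K_2 \uplus E_1$ but is not isomorphic to $K_r \uplus K_s$ must contain an induced $K_2 \uplus E_2$ or an induced $P_3 \uplus E_1$}. To see this, note the induced $K_2 \uplus E_1$ forces at least two connected components with an edge in one of them, since the isolated vertex cannot lie in the same complete multipartite component as the edge. If there are three or more components, an edge from one together with a vertex from each of two others yields $K_2 \uplus E_2$; if there are exactly two components, then (as $G_z \not\cong K_r \uplus K_s$) one of them is a connected complete multipartite graph on at least two vertices that is not a clique, hence contains an induced $P_3$, and a vertex of the other component completes a $P_3 \uplus E_1$. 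Since $G_{\overline{z}} \cong K_{m,n}$ is not a clique it contains an induced $P_3$, so in either sub-case the product contains $P_3 \wprod (K_2 \uplus E_2)$ or $P_3 \wprod (P_3 \uplus E_1)$, both imperfect by Lemma~\ref{lem:P3_prods}. The main obstacle is verifying this structural dichotomy cleanly while correctly bookkeeping the degenerate complete multipartite graphs (single vertices and empty components), for which the equivalence ``complete multipartite $\Leftrightarrow$ $(K_2 \uplus E_1)$-free'' of Corollary~\ref{cor:compmulti_K2uE1free} is the decisive tool.
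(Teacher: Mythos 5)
Your proof is correct and, at the level of machinery, matches the paper's: perfection is certified by Corollary~\ref{cor:GprodKn}, Lemma~\ref{lem:KruKs_Kmn} and Corollary~\ref{cor:completemultiprodperf}, and imperfection by splitting on diamond-freeness of $G_{\overline{z}}$ (Lemma~\ref{lem:diamondfree_comp_multip}) and exhibiting forbidden induced subproducts via Lemmas~\ref{lem:K2uE1_products_nonperf} and~\ref{lem:P3_prods}. Two organizational differences are worth noting, both mildly in your favour. First, where the paper eliminates the remaining $G_z$ in three stages (an induced diamond gives $K_{1,1,2}\uplus E_1$, then an induced $P_3$ gives $P_3 \uplus E_1$, then three or more clique components give $K_2 \uplus E_2$), your dichotomy on the number of components of $G_z$ needs only the two certificates $K_2 \uplus E_2$ and $P_3 \uplus E_1$, and makes the diamond stage for $G_z$ unnecessary, since a diamond in a component already forces an induced $P_3$ there. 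Second, you obtain the induced $K_2 \uplus E_1$ in $G_z$ from the negation of case (3) via Corollary~\ref{cor:compmulti_K2uE1free}, whereas the paper gets it from ``$G_z$ has more than one component''; that inference fails for $G_z \cong E_n$, a pair whose product is in fact perfect and which your framing correctly routes into case (3), so you quietly close a small gap in the paper's own argument. One caveat you share with the paper: the statement requires the convention that the complete multipartite factor $G_{\overline{z}}$ has at least two partite sets. If $G_{\overline{z}} \cong E_n$ were admitted (as a literal reading of Corollary~\ref{cor:compmulti_K2uE1free} would allow), the lemma itself is false --- for example $3K_2 \wprod E_3$ is perfect, falling under case (7) of Theorem~\ref{thm:main_intro}, yet satisfies none of (1)--(3) --- and your step (i), ``not a clique implies an induced $P_3$,'' is precisely where that reading breaks; but this is a defect of the statement, not of your proof.
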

\begin{proof}
    If $G_z$ is connected, $G_z \wprod G_{\overline{z}}$ is perfect by Corollary~\ref{cor:completemultiprodperf} and we get case (3). We now assume $G_z$ has more than one connected component, so from Corollary~\ref{cor:compmulti_K2uE1free} $G_z$ has an induced $K_2 \uplus E_1$.
    If $G_{\overline{z}}$ has an induced diamond, $G_z \wprod G_{\overline{z}}$ is not perfect by Lemma~\ref{lem:K2uE1_products_nonperf}.
    Suppose now $G_{\overline{z}}$ is diamond-free.
    From Lemma~\ref{lem:diamondfree_comp_multip}, $G_{\overline{z}}$ is either a clique or complete bipartite. If $G_{\overline{z}}$ is a clique $G_z \wprod G_{\overline{z}}$ is perfect by Corollary~\ref{cor:GprodKn}, as $G_z$, being a disjoint union of complete multipartites, is $(\text{odd hole, paw})$-free. This falls into case (1). We assume now that
    $G_{\overline{z}} \cong K_{m,n}$. If $G_z$ has an induced diamond then $G_z$ has an induced $K_{1,1,2} \uplus E_1$, as it has more than one connected component, in which case $G_z \wprod G_{\overline{z}}$ is not perfect by Lemma~\ref{lem:P3_prods} (observe that $G_{\overline{z}}$ has an induced $P_3$ by Lemma~\ref{lem:p3free}).
    Now assume $G_z$ is a disjoint union of cliques and complete bipartites. If $G_z$ contains an induced $P_3$, it contains $P_3 \uplus E_1$ and $G_z \wprod G_{\overline{z}}$ is not perfect by Lemma~\ref{lem:P3_prods}. Finally, we let $G_z$ be a disjoint union of cliques. If $G_z \cong K_m \uplus K_n$, then $G_z \wprod G_{\overline{z}}$ is perfect by Lemma~\ref{lem:KruKs_Kmn}; else $G_z$ has three or more connected components and $G_z \wprod G_{\overline{z}}$ is not perfect, since $G_z$ contains $K_2 \uplus E_2$ as an induced subgraph and from Lemma~\ref{lem:P3_prods}, $P_3 \wprod (K_2 \uplus E_2)$ is not perfect. This gives us case (2) and completes the proof.   
\end{proof}

\begin{lem}
    \label{lem:chunk_iv}
    Suppose $G_{z}$ has an induced paw. Furthermore, suppose $G_{\overline{z}}$ is connected and $(P_4, \text{cricket, dart,}$ $\text{hourglass})$-free.  Then $G_z \wprod G_{\overline{z}}$ is perfect if and only if either
    \begin{enumerate}
        \item $G_{\overline{z}} \cong K_{m,n}$, $G_z$ is connected and $(P_4, \text{cricket, dart, hourglass})$-free; or
        \item $G_{\overline{z}} \in \{ K_1, K_2 \}$. 
    \end{enumerate}
\end{lem}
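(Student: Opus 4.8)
The plan is to prove both directions by repeatedly passing to the complement via Lemma~\ref{lem:wmp_comp_perf} and reducing to the already-established analysis of two-clique products. Sufficiency is quick. If $G_{\overline{z}} \in \{K_1, K_2\}$ then $G_z \wprod G_{\overline{z}} = G_z \wprod K_n$ with $n \le 2$, which is perfect by Corollary~\ref{cor:GprodKn}, giving case (2). For case (1), where $G_{\overline{z}} \cong K_{m,n}$ and $G_z$ is connected and $(P_4,\text{cricket},\text{dart},\text{hourglass})$-free, I would complement: $\overline{K_{m,n}} = K_m \uplus K_n$, and Lemma~\ref{lem:disjunstrscliqcpml} turns the hypothesis on $G_z$ into the statement that $\overline{G_z}$ is a disjoint union of cliques and stars. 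Then $\overline{G_z} \wprod (K_m \uplus K_n)$ is perfect by Proposition~\ref{prop:twocliq_disjstrscliqs}, so $G_z \wprod G_{\overline{z}}$ is perfect by Lemma~\ref{lem:wmp_comp_perf}.

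For necessity, assume $G_z \wprod G_{\overline{z}}$ is perfect. If $G_{\overline{z}} \in \{K_1,K_2\}$ we are in case (2), so suppose otherwise; then $G_{\overline{z}}$ is connected with at least three vertices. Since $G_z$ has an induced paw it has an induced $K_2 \uplus E_1$, so $(K_2 \uplus E_1) \wprod G_{\overline{z}}$ is perfect, and Lemma~\ref{lem:K2uE1_products_nonperf} (taking $K_{1,1,2}$ and $Y$ from its list) forces $G_{\overline{z}}$ to be diamond-free and paw-free. Being connected and $(P_4,\text{paw})$-free, $G_{\overline{z}}$ is complete multipartite by Lemma~\ref{lem:p4pawfree}, hence a clique or complete bipartite by Lemma~\ref{lem:diamondfree_comp_multip}. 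A clique $K_n$ with $n\ge 3$ is impossible, since $G_z \wprod K_n$ would fail to be perfect by Corollary~\ref{cor:GprodKn} (as $G_z$ is not paw-free); therefore $G_{\overline{z}} \cong K_{m,n}$ with $m+n\ge 3$.

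It remains to pin down $G_z$, and this is the step I expect to be the crux. I would complement once more: $G_z \wprod K_{m,n}$ is perfect if and only if $H \wprod (K_m \uplus K_n)$ is perfect, where $H := \overline{G_z}$. Because $G_z$ contains an induced paw $Y$ and $\overline{Y} = P_3 \uplus E_1$ by Observation~\ref{obs:namedcomplements}, $H$ contains an induced $P_3 \uplus E_1$. As before, since $m+n\ge 3$ the factor $K_m \uplus K_n$ contains an induced $K_2 \uplus E_1$, so $(K_2 \uplus E_1) \wprod H$ is perfect and Lemma~\ref{lem:K2uE1_products_nonperf} makes $H$ paw-free. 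Thus the pair $(K_m \uplus K_n, H)$ satisfies the hypotheses of Lemma~\ref{lem:chunk_i}, which lists four possibilities for a perfect product. The key observation is that the induced $P_3 \uplus E_1$ in $H$ eliminates the first three: none of $C_5$, $P_4$, a clique, or a complete bipartite graph contains an induced $P_3 \uplus E_1$ (the first two have too few vertices to realise the isolated vertex, a clique is $P_3$-free, and in a complete bipartite graph an isolated vertex forces its three companions into one part, which is edgeless). Hence $H$ is a disjoint union of cliques and stars with two or more components, and complementing back through Lemma~\ref{lem:disjunstrscliqcpml} gives that $G_z = \overline{H}$ is connected and $(P_4,\text{cricket},\text{dart},\text{hourglass})$-free, which is case (1). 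The main obstacle is recognising that the entire $G_z$-side analysis collapses onto Lemma~\ref{lem:chunk_i} after complementation, and that the single forbidden configuration $P_3 \uplus E_1$ — precisely the complement of the paw we assumed in $G_z$ — is exactly what discards the exceptional cases of that lemma.
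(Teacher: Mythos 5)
Your proof is correct, and its necessity direction takes a genuinely different route from the paper's. The sufficiency argument and the reduction of $G_{\overline{z}}$ to a clique or a complete bipartite graph are essentially the paper's own (the paper reaches complete multipartiteness via Corollary~\ref{cor:compmulti_K2uE1free} where you use Lemma~\ref{lem:p4pawfree}, but both hinge on Lemma~\ref{lem:K2uE1_products_nonperf} applied against the induced $K_2\uplus E_1$ inside the paw, and both dispose of cliques via Corollary~\ref{cor:GprodKn}). The divergence is in forcing the conditions on $G_z$ once $G_{\overline{z}}\cong K_{m,n}$: the paper argues directly, splitting into the cases that $G_z$ is disconnected (then $P_3\uplus E_1\subseteq G_z$, $P_3\subseteq G_{\overline{z}}$, and Lemma~\ref{lem:P3_prods} applies) or that $G_z$ contains one of $P_4$, cricket, dart, hourglass (citing Lemmas~\ref{lem:P4_prods} and~\ref{lem:P3_cricket_dart_hourglass}); you instead complement and feed the pair $(K_m\uplus K_n,\overline{G_z})$ into Lemma~\ref{lem:chunk_i}, discarding its exceptional outcomes with the single observation that $\overline{Y}=P_3\uplus E_1$ embeds in none of $C_5$, $P_4$, a clique, or a complete bipartite graph. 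This is legitimate---Lemma~\ref{lem:chunk_i} precedes Lemma~\ref{lem:chunk_iv} and its proof does not depend on it---and it buys something concrete: it covers the sub-case $G_{\overline{z}}\cong K_{1,r}$ (a star) with $P_4\subseteq G_z$, where the paper's appeal to Lemma~\ref{lem:P4_prods} is insufficient as written, since that lemma needs the second factor to contain one of $2K_2$, $Y$, $K_{2,2}$, $K_{1,1,2}$, and a star contains none of these; your route through Lemma~\ref{lem:chunk_i} closes that hole. What the paper's direct analysis buys in exchange is self-containment: it rests only on the small imperfect-product lemmas rather than on the long case analysis of Lemma~\ref{lem:chunk_i}. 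One small repair to your write-up: the reason $C_5$ has no induced $P_3\uplus E_1$ is not a shortage of vertices (it has five), but that every four-vertex induced subgraph of $C_5$ is a $P_4$; the claim you need is still true.
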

\begin{proof}
    First, suppose that $G_{\overline{z}}$ has an induced $K_2 \uplus E_1$, in which case $G_z \wprod G_{\overline{z}}$ is not perfect by Lemma~\ref{lem:K2uE1_products_nonperf}.
    From Corollary~\ref{cor:compmulti_K2uE1free}, a connected $(K_2 \uplus E_1)$-free graph is complete multipartite. We thus assume $G_{\overline{z}}$ is complete multipartite. If $G_{\overline{z}} \cong K_n$, we get case (2) from Corollary~\ref{cor:GprodKn}.
    We then let $G_{\overline{z}}\not\cong K_n$ for any $n\in \mathbb{N}$. Either $G_{\overline{z}}$ has an induced $K_{1,1,2}$ or is a clique or complete bipartite by Lemma~\ref{lem:diamondfree_comp_multip}. In the former case $G_z \wprod G_{\overline{z}}$ is not perfect by Lemma~\ref{lem:K2uE1_products_nonperf}, using the fact that $Y$ contains $K_2 \uplus E_1$ as an induced subgraph. In the latter case we have two scenarios: \emph{i.} $G_z$ is connected and $(P_4, \text{cricket, dart, hourglass})$-free, in which case $G_z \wprod G_{\overline{z}}$ is perfect from Proposition~\ref{prop:twocliq_disjstrscliqs} taken with Lemmas~\ref{lem:wmp_comp_perf}~and~\ref{lem:disjunstrscliqcpml}. In scenario \emph{ii.} $G_z$ is either disconnected or has an induced $P_4$, cricket, dart or hourglass. If $G_z$ is disconnected it contains an induced $P_3 \uplus E_1$. Recall that $G_{\overline{z}}$ contains an induced $P_3$ by Lemma~\ref{lem:p3free}, so $G_z \wprod G_{\overline{z}}$ is not perfect by Lemma~\ref{lem:P3_prods}. If $G_z$ contains any of $\{P_4, \text{cricket, dart, hourglass}\}$, then by Lemmas~\ref{lem:P4_prods}~and~\ref{lem:P3_cricket_dart_hourglass} $G_z \wprod G_{\overline{z}}$ is not perfect. This gives us case (1). 
\end{proof}

\begin{lem}
    \label{lem:chunk_v_i}
    Suppose $G_z$ has an induced $P_3$ and $\overline{G_{\overline{z}}} \cong r K_2$.
    Then, $G_z \wprod G_{\overline{z}}$ is perfect if and only if
    \begin{enumerate}
        \item $\overline{G_z}$ is a disjoint union of cliques (equiv. $G_z$ is complete multipartite); or
        \item $\overline{G_{\overline{z}}} \cong K_2$ (equiv. $G_{\overline{z}} \cong E_2$); or 
        \item $\overline{G_z}$ is a disjoint union of stars and cliques and $\overline{G_{\overline{z}}} \cong 2 K_2$ (equiv. $G_z$ is connected and $(P_4, \text{cricket, dart, hourglass})$-free, $G_{\overline{z}} \cong K_{2,2}$).
        \item $\overline{G_z} \cong K_{m,n}$ for $m,n \in \mathbb{N}$ and $\overline{G_{\overline{z}}} \cong 2 K_2$ (equiv. $G_z \cong K_m \uplus K_n$ and $G_{\overline{z}}\cong K_{2,2}$).
    \end{enumerate}
\end{lem}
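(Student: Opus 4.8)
The plan is to pass to complements at once. By Lemma~\ref{lem:wmp_comp_perf}, $G_z \wprod G_{\overline{z}}$ is perfect if and only if $\overline{G_z} \wprod \overline{G_{\overline{z}}} = \overline{G_z} \wprod (rK_2)$ is perfect, so I would set $H := \overline{G_z}$ and study $H \wprod (rK_2)$. Since $G_z$ has an induced $P_3$ and $\overline{P_3} \cong K_2 \uplus E_1$, the graph $H$ contains an induced $K_2 \uplus E_1$; equivalently, by Corollary~\ref{cor:compmulti_K2uE1free}, $H$ is not complete multipartite. Translating the four cases through this complement, the target becomes: $H \wprod (rK_2)$ is perfect exactly when $H$ is a disjoint union of cliques (any $r$), or $r = 1$, or $r = 2$ and $H$ is a disjoint union of stars and cliques, or $r = 2$ and $H \cong K_{m,n}$.

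For the sufficiency direction I would dispatch each case with an existing result. When $r = 1$ we have $\overline{K_2} = E_2$ edgeless, so Lemma~\ref{lem:GprodHunion} gives $H \wprod K_2 = H \otimes K_2$, the bipartite double cover of $H$, which is bipartite and hence perfect. The disjoint-union-of-cliques case is Lemma~\ref{lem:GHdisj_cliqs}, since $rK_2$ is itself such a union; the two $r = 2$ cases are Proposition~\ref{prop:twocliq_disjstrscliqs} and Lemma~\ref{lem:KruKs_Kmn} respectively, both applied with $2K_2 = K_2 \uplus K_2$.

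The necessity direction is the heart of the argument and I would split on $r$. For $r \geq 3$: if $H$ is not a disjoint union of cliques it has an induced $P_3$ by Lemma~\ref{lem:p3free}, and since $rK_2$ contains an induced $3K_2$ while $P_3 \wprod 3K_2$ is imperfect (Lemma~\ref{lem:P3_prods}), imperfection is inherited from induced-subgraph factors and the product fails; thus $r \geq 3$ forces the disjoint-union-of-cliques case. For $r = 2$ I would first show $H$ is $(P_4, \text{diamond}, \text{paw})$-free: an induced $P_4$ yields imperfection via $P_4 \wprod 2K_2$ (Lemma~\ref{lem:P4_prods}), while an induced diamond or paw yields imperfection via Lemma~\ref{lem:K2uE1_products_nonperf}, using that $2K_2$ contains an induced $K_2 \uplus E_1$. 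Such a graph decomposes cleanly: by Lemma~\ref{lem:paw_free_graphs} each component is triangle-free or complete multipartite; a triangle-free $P_4$-free component contains no odd hole (every odd hole contains an induced $P_4$), hence is bipartite by Lemma~\ref{lem:triangle_free_odd_hole} and then complete bipartite by Lemma~\ref{lem:compbipP4}, while a diamond-free complete multipartite component is a clique or complete bipartite by Lemma~\ref{lem:diamondfree_comp_multip}. So every component of $H$ is a clique or a complete bipartite graph. If some component is a complete bipartite $K_{a,b}$ with $a,b \geq 2$, connectedness is forced — a second component would produce an induced $K_{2,2} \uplus E_1$ and imperfection through Lemma~\ref{lem:K2uE1_products_nonperf} — landing in the $H \cong K_{m,n}$ case; otherwise every component is a clique or star and $H$ is a disjoint union of stars and cliques.

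The hard part will be this $r = 2$ necessity: extracting exactly the three forbidden products $P_4 \wprod 2K_2$, $(K_2 \uplus E_1) \wprod K_{1,1,2}$ and $(K_2 \uplus E_1) \wprod Y$ needed to force the $(P_4, \text{diamond}, \text{paw})$-free structure, and then cleanly separating the connected complete-bipartite outcome (case~4) from the disjoint-union-of-stars-and-cliques outcome (case~3), with the ``thick'' complete bipartite plus extra component configuration ruled out by the $K_{2,2} \uplus E_1$ product. I would also verify the mild consistency point that a single star or clique is $(K_2 \uplus E_1)$-free, so the standing hypothesis $K_2 \uplus E_1 \subseteq_{\mathrm{ind}} H$ forces the disconnected (two-or-more-component) shape in case~(3) and excludes the degenerate single-star reading of case~(4).
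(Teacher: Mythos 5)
Your proof is correct, and for the only substantive case ($r=2$) it takes a genuinely different route from the paper. The paper treats $r\geq 3$, $r=1$, and the disjoint-union-of-cliques case exactly as you do (imperfection of $P_3 \wprod 3K_2$ via Lemma~\ref{lem:P3_prods}, Corollary~\ref{cor:GprodKn}, and Corollary~\ref{cor:completemultiprodperf}, the complemented form of your Lemma~\ref{lem:GHdisj_cliqs} step), but for $r=2$ it does no fresh structure theory: it splits on whether $\overline{G_z}$ has an induced paw --- if so, $G_z$ has an induced co-paw $P_3 \uplus E_1$, and since $G_{\overline{z}} \cong K_{2,2}$ contains an induced $P_3$, Lemma~\ref{lem:P3_prods} kills perfection; if not, it simply invokes the already-proved Lemma~\ref{lem:chunk_i} with factors $2K_2 = K_2 \uplus K_2$ and the paw-free graph $\overline{G_z}$, whose cases (3) and (4) are precisely the stars-and-cliques and $K_{m,n}$ outcomes. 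You instead inline the structural argument: forbidding $P_4$, diamond and paw via $P_4 \wprod 2K_2$ (Lemma~\ref{lem:P4_prods}) and $(K_2 \uplus E_1) \wprod K_{1,1,2}$, $(K_2 \uplus E_1) \wprod Y$ (Lemma~\ref{lem:K2uE1_products_nonperf}), decomposing with Lemmas~\ref{lem:paw_free_graphs}, \ref{lem:triangle_free_odd_hole}, \ref{lem:compbipP4} and~\ref{lem:diamondfree_comp_multip}, and separating the connected $K_{m,n}$ outcome from stars-and-cliques via the $K_{2,2} \uplus E_1$ product. This is, in effect, a self-contained re-derivation (specialised to $2K_2$) of the structure theorem living inside Lemma~\ref{lem:chunk_i}: the paper's citation is shorter and reuses prior work, while your version avoids the somewhat delicate case-matching needed to specialise Lemma~\ref{lem:chunk_i} (where two of its four cases must be discarded as inapplicable), and makes the underlying structure explicit. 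One refinement to your closing remark: under the standing hypothesis that $\overline{G_z}$ has an induced $K_2 \uplus E_1$, it is not merely the single-star reading of case (4) that is excluded but the whole of case (4) --- every $K_{m,n}$ is $(K_2 \uplus E_1)$-free (equivalently, $K_m \uplus K_n$ is $P_3$-free by Lemma~\ref{lem:p3free}), so case (4) is vacuous under the lemma's hypothesis; this is harmless for the ``if'' direction, which your appeal to Lemma~\ref{lem:KruKs_Kmn} covers, and your necessity argument correctly yields it only as a disjunct that never obtains.
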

\begin{proof}
    If $\overline{G_z}$ is a disjoint union of cliques, we have that $G_z$ and $G_{\overline{z}}$ are both complete multipartite by Lemma~\ref{lem:disjoint_compl_multi}. Corollary~\ref{cor:completemultiprodperf} gives us perfection of $G_z \wprod G_{\overline{z}}$, case (1). Now suppose $\overline{G_z}$ has an induced $P_3$. If $r \geq 3$, $G_z \wprod G_{\overline{z}}$ is not perfect by Lemma~\ref{lem:P3_prods} and Lemma~\ref{lem:wmp_comp_perf}. If $\overline{G_{\overline{z}}}\cong K_2$, we have case (2) from Corollary~\ref{cor:GprodKn}. Now suppose $\overline{G_{\overline{z}}}\cong 2K_2$. If we suppose that $\overline{G_{\overline{z}}}$ is paw-free we have case (3) and (4) from Lemma~\ref{lem:chunk_i}. If $\overline{G_{\overline{z}}}$ has an induced paw $G_z \wprod G_{\overline{z}}$ is not perfect by Lemma~\ref{lem:P3_prods}, as $G_{\overline{z}}$ has an induced $P_3 \uplus E_1$ (by Observation~\ref{obs:namedcomplements}).
\end{proof}

\begin{lem}
    \label{lem:chunk_v_ii}
    Suppose $G_z$ has an induced $P_3$ and is triangle-free. Moreover, suppose that $\overline{G_{\overline{z}}}$ is a disjoint union of stars with induced $P_3$.
    Then, $G_z \wprod G_{\overline{z}}$ is perfect if and only if
    \begin{enumerate}
        \item $G_z \cong C_5$, $\overline{G_{\overline{z}}} \cong P_3$ (equiv. $G_{\overline{z}} \cong K_2 \uplus E_1$); or
        \item $G_z \cong K_{m,n}$ for $m,n \geq 2$; or
        \item $G_z$ is a disjoint union of stars with two or more connected components, $\overline{G_{\overline{z}}} \cong K_{1,r}$ (equiv. $G_{\overline{z}} \cong K_1 \uplus K_r$); or
        \item $G_z \cong K_{1,r}$; or
        \item $G_z \cong P_4$, $\overline{G_{\overline{z}}} \cong K_{1,r}$ (equiv. $G_{\overline{z}} \cong K_1 \uplus K_r$). 
    \end{enumerate}
\end{lem}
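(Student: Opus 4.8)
The plan is to run a case analysis on the structure of $G_z$, exploiting one universal fact: since $\overline{G_{\overline{z}}}$ contains an induced $P_3$, its complement $G_{\overline{z}}$ contains an induced $\overline{P_3} \cong K_2 \uplus E_1$. Hence, to establish imperfection it suffices in most subcases to locate an induced subgraph $X \subseteq G_z$ with $X \wprod (K_2 \uplus E_1)$ imperfect, since imperfection is inherited by products of supergraphs. For the perfect cases I would pass to complements via Lemma~\ref{lem:wmp_comp_perf}, because $\overline{G_{\overline{z}}}$ has the convenient form of a disjoint union of stars and the relevant $\overline{G_z}$ will be a disjoint union of cliques, bringing Proposition~\ref{prop:twocliq_disjstrscliqs} into play. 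As $G_z$ is triangle-free with an induced $P_3$, I would first split into $G_z$ nonbipartite versus bipartite; in the bipartite case Lemma~\ref{lem:compbipP4} splits further into ``$G_z$ contains an induced $P_4$'' and ``$G_z$ is a disjoint union of complete bipartite graphs.'' The forward implication is then proved by contrapositive.

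The generic imperfect subcases fall to the one-sided $K_2 \uplus E_1$ reduction. If $G_z$ is nonbipartite with an odd hole of length $\geq 7$ it contains an induced $P_5$ (Lemma~\ref{lem:triangle_free_odd_hole} locates the hole), and $P_5 \wprod (K_2 \uplus E_1)$ is imperfect by Lemma~\ref{lem:K2uE1_products_nonperf}. If the largest odd hole is a $C_5$ but $G_z \not\cong C_5$, then $G_z$ contains a triangle-free augment of $C_5$ and Lemma~\ref{lem:C5trifree_augments_C5_K2uE1} applies. If $G_z$ is bipartite, contains an induced $P_4$, and $G_z \not\cong P_4$, then $G_z$ contains a bipartite augment of $P_4$ and Lemma~\ref{lem:Bipartite_augments_P4_K2uE1} applies. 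Finally, if $G_z$ is a disjoint union of complete bipartite graphs with at least two components, one containing an induced $K_{2,2}$, then $G_z$ contains $K_{2,2} \uplus E_1$ and Lemma~\ref{lem:K2uE1_products_nonperf} again forces imperfection. Each of these lands outside the claimed list.

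The delicate part, and the main obstacle, is the three families where perfection genuinely depends on $\overline{G_{\overline{z}}}$, so a one-sided argument cannot suffice. For $G_z \cong C_5$: if $\overline{G_{\overline{z}}}$ has two or more components, or is a single star $K_{1,r}$ with $r \geq 3$, then $G_{\overline{z}}$ contains a triangle (in the first case $\overline{G_{\overline{z}}} \supseteq P_3 \uplus E_1$, whose complement is the paw and hence contains $K_3$ by Observation~\ref{obs:namedcomplements}; in the second $\overline{K_{1,r}} = K_1 \uplus K_r \supseteq K_3$), so $C_5 \wprod K_3$ is imperfect by Lemma~\ref{lem:C5_prods}; only $\overline{G_{\overline{z}}} \cong P_3$ survives, giving case (1). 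For $G_z \cong P_4$ I would complement: since $P_4$ is self-complementary, $P_4 \wprod G_{\overline{z}}$ is perfect iff $P_4 \wprod \overline{G_{\overline{z}}}$ is, and if $\overline{G_{\overline{z}}}$ has two or more components it contains $P_3 \uplus E_1$, whose complement is the paw $Y$; then $P_4 \wprod (P_3 \uplus E_1)$ is imperfect because $P_4 \wprod Y$ is imperfect by Lemma~\ref{lem:P4_prods}, leaving only $\overline{G_{\overline{z}}} \cong K_{1,r}$ (case (5)). The sibling configuration where $G_z$ is itself a disjoint union of two or more stars needs the two-sided version: here $G_z \supseteq K_2 \uplus E_1$ and $G_{\overline{z}} \supseteq Y$, so $(K_2 \uplus E_1) \wprod Y$ is imperfect by Lemma~\ref{lem:K2uE1_products_nonperf}, ruling out everything except $\overline{G_{\overline{z}}} \cong K_{1,r}$ (case (3)). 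Aligning these reverse-containment and complement identities with exactly the right product lemma is the crux.

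For the backward direction I would verify each listed case is perfect. Cases (2) and (4) reduce, after complementing with Lemma~\ref{lem:wmp_comp_perf}, to $(K_m \uplus K_n) \wprod (\text{disjoint union of stars})$ and $(K_1 \uplus K_r) \wprod (\text{disjoint union of stars})$ respectively, using $\overline{K_{m,n}} = K_m \uplus K_n$ and $\overline{K_{1,r}} = K_1 \uplus K_r$; case (3) is directly $(K_1 \uplus K_r) \wprod (\text{disjoint union of stars})$. All three are perfect by Proposition~\ref{prop:twocliq_disjstrscliqs}. Case (1) is Corollary~\ref{cor:C5prodK2uE1}, and case (5) is Lemma~\ref{lem:P4wprodK1r}. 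Assembling the forward and backward directions across the structural dichotomy of $G_z$ then completes the proof.
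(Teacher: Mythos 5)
Your proof is correct, and it follows the same overall decomposition as the paper's: split the triangle-free $G_z$ into nonbipartite (largest odd hole $\geq 7$ versus induced $C_5$), bipartite with induced $P_4$, and disjoint union of complete bipartite graphs ($K_{2,2}$ present versus a union of stars); certify the perfect cases via Proposition~\ref{prop:twocliq_disjstrscliqs} after complementing, Corollary~\ref{cor:C5prodK2uE1} and Lemma~\ref{lem:P4wprodK1r}; and eliminate everything else with the imperfect-product lemmas. The execution differs in a worthwhile way, though. You exploit the single universal fact $G_{\overline{z}} \supseteq K_2 \uplus E_1$ to run one-sided reductions (Lemmas~\ref{lem:K2uE1_products_nonperf}, \ref{lem:C5trifree_augments_C5_K2uE1}, \ref{lem:Bipartite_augments_P4_K2uE1}), whereas the paper repeatedly splits on whether $\overline{G_{\overline{z}}}$ is connected, using $P_3$-type lemmas (Lemmas~\ref{lem:P3_prods}, \ref{lem:C5trifree_augments_C5_P3}, \ref{lem:K22_prods}) in the disconnected branch and $K_2 \uplus E_1$-type lemmas otherwise. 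Your uniform reduction is not only shorter but tighter in at least one spot: when $G_z$ is a disjoint union of complete bipartite graphs with two or more components containing $K_{2,2}$, the paper invokes an induced $P_3 \wprod (P_3 \uplus E_1)$, which needs $G_{\overline{z}}$ to contain an induced $P_3$, i.e.\ $\overline{G_{\overline{z}}}$ disconnected --- not guaranteed at that point --- while your $(K_{2,2} \uplus E_1) \wprod (K_2 \uplus E_1)$ reduction via Lemma~\ref{lem:K2uE1_products_nonperf} covers all admissible $G_{\overline{z}}$ at once. Likewise, in the $G_z \cong C_5$ branch you collapse every bad configuration to $C_5 \wprod K_3$ (Lemma~\ref{lem:C5_prods}), where the paper treats the $P_3 \uplus E_1$ and $K_{1,3}$ cases separately, and in the union-of-stars branch you use $(K_2 \uplus E_1) \wprod Y$ where the paper complements to $K_{2,2} \wprod (P_3 \uplus E_1)$. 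One nitpick: case (5) is the product $P_4 \wprod (K_1 \uplus K_r)$, so the clean citation is Corollary~\ref{cor:P4wprodK1uplusKr} (equivalently Lemma~\ref{lem:P4wprodK1r} combined with Lemma~\ref{lem:wmp_comp_perf} and self-complementarity of $P_4$), which is consistent with the complementation you set up but should be stated explicitly.
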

\begin{proof}
    First suppose that $G_z$ is nonbipartite, so by Lemma~\ref{lem:triangle_free_odd_hole} it has an odd hole. If the largest odd hole in $G_z$ has size six or larger, then $G_z$ has an induced $P_5$.
    By Lemmas~\ref{lem:p3free},~\ref{lem:disjoint_compl_multi} and Corollary~\ref{cor:compmulti_K2uE1free}, $G_{\overline{z}}$ has an induced $P_3$ if and only if $\overline{G_{\overline{z}}}$ is disconnected.
    Thus, $G_z \wprod G_{\overline{z}}$ is not perfect by Lemma~\ref{lem:P3_prods} if $\overline{G_{\overline{z}}}$ is disconnected.
    Let $\overline{G_{\overline{z}}}$ be connected. Then, $G_{\overline{z}} \cong K_r \uplus K_1$ for some $r \geq 2$ and so $G_{\overline{z}}$ contains an induced $K_2 \uplus E_1$.
    In this case $G_z \wprod G_{\overline{z}}$ is not perfect by Lemma~\ref{lem:Bipartite_augments_P4_K2uE1} as $P_5$ is a bipartite augment of $P_4$.

    Now suppose $G_z$ has an induced $C_5$. If $G_z \not\cong C_5$, then $G_z \wprod G_{\overline{z}}$ is not perfect by Lemma~\ref{lem:C5trifree_augments_C5_P3}. We now let $G_z \cong C_5$, in which case $\overline{G_z} \cong C_5$. If $\overline{G_{\overline{z}}} \not\cong K_{1,2}$, then either it contains an induced $P_3 \uplus E_1$, or it contains an induced $K_{1,3}$.
    By Lemmas~\ref{lem:P3_prods}~and~\ref{lem:C5_prods} respectively, $G_z \wprod G_{\overline{z}}$ is not perfect in both cases.
    If $\overline{G_{\overline{z}}} \cong K_{1,2}$, $G_{\overline{z}}\cong K_2 \uplus E_1$ and $G_z \wprod G_{\overline{z}}$ is perfect by Corollary~\ref{cor:C5prodK2uE1}, giving case (1).
    
    Now we suppose $G_z$ is bipartite. Suppose also that it has an induced $P_4$, so has a connected component that is not complete bipartite by Lemma~\ref{lem:compbipP4}. Moreover, $\overline{G_z}$ also has an induced $P_4$ since $P_4$ is self-complementary. Now assume that $\overline{G_{\overline{z}}}$ has two or more connected components, in which case it has an induced $P_3 \uplus E_1$ and thus $G_{\overline{z}}$ has an induced paw by Observation~\ref{obs:namedcomplements}. Then, $G_z \wprod G_{\overline{z}}$ is not perfect by Lemma~\ref{lem:P4_prods}. Now suppose that $\overline{G_{\overline{z}}}$ is connected, so $\overline{G_{\overline{z}}} \cong K_{1,r}$ and $G_{\overline{z}} \cong K_r \uplus E_1$. If $\overline{G_z}\not\cong P_4$, by Lemmas~\ref{lem:wmp_comp_perf}~and~\ref{lem:Bipartite_augments_P4_P3} $G_z \wprod G_{\overline{z}}$ is not perfect. If $\overline{G_z}\cong P_4$, $G_z \cong P_4$ and $G_z \wprod G_{\overline{z}}$ is perfect by Corollary~\ref{cor:P4wprodK1uplusKr}, giving case (5).

    Now we suppose $G_z$ is a disjoint union of complete bipartites. First suppose $G_z$ has an induced $K_{2,2}$. If it has two or more connected components then $G_z$ contains an induced $K_{2,2} \uplus E_1$ and $G_z \wprod G_{\overline{z}}$ contains an induced $P_3 \wprod (P_3 \uplus E_1)$, so is not perfect by Lemma~\ref{lem:P3_prods}. If $G_z$ is connected, then $G_z \cong K_{m,n}$, $\overline{G_z} \cong K_m \uplus K_n$. By Proposition~\ref{prop:twocliq_disjstrscliqs} and Lemma~\ref{lem:wmp_comp_perf} $G_z \wprod G_{\overline{z}}$ is perfect, giving case (2). 
    Now let $G_z$ be a disjoint union of stars. Moreover, suppose $G_z$ has two or more connected components so has induced $2K_2$; equivalently $\overline{G_z}$ has an induced $K_{2,2}$. Also, let $\overline{G_{\overline{z}}}$ have two or more connected components so has induced $P_3 \uplus E_1$. From Lemma~\ref{lem:K22_prods} we have that $K_{2,2} \wprod (P_3 \uplus E_1)$ is not perfect, and so by Lemma~\ref{lem:wmp_comp_perf} $G_z \wprod G_{\overline{z}}$ is not perfect. If $\overline{G_{\overline{z}}}$ is connected $G_{\overline{z}} \cong K_1 \uplus K_r$, so by Proposition~\ref{prop:twocliq_disjstrscliqs} and Lemma~\ref{lem:wmp_comp_perf} $G_z \wprod G_{\overline{z}}$ is perfect, giving case (3).
    Finally, suppose $G_z$ is connected. Then, $G_z \cong K_{1,r}$, $\overline{G_z} \cong K_1 \uplus K_r$ and $G_z \wprod G_{\overline{z}}$ is perfect by Proposition~\ref{prop:twocliq_disjstrscliqs} and Lemma~\ref{lem:wmp_comp_perf}, giving case (4). This completes the proof.
\end{proof}

\begin{lem}\label{lem:K2uE2_free_P3}
    Let $G$ be a $(K_2 \uplus E_2)$-free graph such that $\alpha(G) \geq 3$. Then $G \wprod P_3$ is perfect if and only if either: $G$ is connected and $(P_4, \text{cricket, dart, hourglass})$-free; or $G\cong E_n$.
\end{lem}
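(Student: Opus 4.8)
The plan is to pass to complements and reduce the whole statement to the already-proved chunk lemma. By Lemma~\ref{lem:wmp_comp_perf}, $G \wprod P_3$ is perfect if and only if $\overline{G} \wprod (K_2 \uplus E_1)$ is perfect, since $\overline{P_3} = K_2 \uplus E_1$. Writing $K_2 \uplus E_1 = K_2 \uplus K_1$, the latter is a product $(K_r \uplus K_s) \wprod \overline{G}$ with $r+s = 3$, i.e.\ exactly the setting of Lemma~\ref{lem:chunk_i}, \emph{provided} $\overline{G}$ is paw-free. In parallel I would translate the two target families into complement language: by Lemma~\ref{lem:disjunstrscliqcpml}, ``$G$ connected and $(P_4,\text{cricket, dart, hourglass})$-free'' is equivalent to ``$\overline{G}$ is a disjoint union of cliques and stars with at least two components'', and trivially ``$G \cong E_n$'' is equivalent to ``$\overline{G} \cong K_n$''. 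In particular, in both target cases $\overline{G}$ is a disjoint union of cliques and stars.

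For sufficiency this translation does all the work: in either target case $\overline{G}$ is a disjoint union of cliques and stars, so $(K_2 \uplus K_1) \wprod \overline{G}$ is perfect by Proposition~\ref{prop:twocliq_disjstrscliqs}, and hence $G \wprod P_3$ is perfect by Lemma~\ref{lem:wmp_comp_perf}.

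The crux is necessity, and the key step --- which I expect to be the main obstacle --- is to observe that perfection of $G \wprod P_3$ \emph{already} forces $\overline{G}$ to be paw-free. Indeed, Lemma~\ref{lem:P3_prods} gives that $P_3 \wprod (P_3 \uplus E_1)$ is not perfect, so by the observation on induced subgraphs $G$ cannot contain an induced $P_3 \uplus E_1$; since the complement of the paw is $P_3 \uplus E_1$ by Observation~\ref{obs:namedcomplements}, this is precisely the assertion that $\overline{G}$ is paw-free. With paw-freeness secured, Lemma~\ref{lem:chunk_i} applies to $(K_2 \uplus K_1) \wprod \overline{G}$ --- which is perfect by Lemma~\ref{lem:wmp_comp_perf} --- so $\overline{G}$ must satisfy one of the four cases enumerated there.

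It then remains to prune those four cases using $\alpha(G) \geq 3$. Since $\omega(\overline{G}) = \alpha(G) \geq 3$, the complement $\overline{G}$ contains a triangle; this immediately rules out $\overline{G} \cong C_5$ and $\overline{G} \cong P_4$ (cases~(1)--(2) of Lemma~\ref{lem:chunk_i}, both triangle-free) and forces $\overline{G} \cong K_m$ rather than $K_{m,n}$ in case~(3). What survives is exactly $\overline{G} \cong K_m$ or $\overline{G}$ a disjoint union of cliques and stars with two or more components, i.e.\ $G \cong E_m$ or $G$ connected and $(P_4,\text{cricket, dart, hourglass})$-free, as claimed. The steps I would verify most carefully are the two complement translations of the target families and the claim that the triangle eliminates precisely the unwanted cases; I would also sanity-check the exact role of the $(K_2 \uplus E_2)$-free hypothesis, since on this route the argument leans chiefly on $\alpha(G) \geq 3$ together with the paw dichotomy that unlocks Lemma~\ref{lem:chunk_i}.
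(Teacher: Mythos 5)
Your proof is correct, and its necessity direction takes a genuinely different route from the paper's. The paper argues the contrapositive directly on $G$: an induced $P_4$ is handled through the augment lemmas (with $\alpha(G)\geq 3$ ruling out $G\cong P_4$ and $G\cong C_5$ themselves), an induced cricket, dart or hourglass through Lemma~\ref{lem:P3_cricket_dart_hourglass}, and disconnectedness by using $(K_2\uplus E_2)$-freeness to force exactly two components, $\alpha(G)\geq 3$ to force an induced $P_3\uplus E_1$, and then Lemma~\ref{lem:P3_prods}. You instead pass to complements, note that perfection of $G\wprod P_3$ already forces $\overline{G}$ to be paw-free (via $P_3\wprod(P_3\uplus E_1)$ and Observation~\ref{obs:namedcomplements}), and then feed $(K_2\uplus K_1)\wprod\overline{G}$ into Lemma~\ref{lem:chunk_i}, pruning its four cases with $\omega(\overline{G})=\alpha(G)\geq 3$; the pruning is sound, since $C_5$, $P_4$ and $K_{m,n}$ all have clique number $2$, and the two surviving cases ($\overline{G}\cong K_m$, and $\overline{G}$ a disjoint union of cliques and stars with two or more components) translate exactly to the two target families by Lemma~\ref{lem:disjunstrscliqcpml}. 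There is no circularity in this reuse: Lemma~\ref{lem:chunk_i} is proved before Lemma~\ref{lem:K2uE2_free_P3} and its proof does not depend on it. The sufficiency directions of the two proofs essentially coincide (complements, Lemma~\ref{lem:disjunstrscliqcpml}, Proposition~\ref{prop:twocliq_disjstrscliqs}). Your route buys two things: it replaces a bespoke case analysis by a single invocation of an already-proved chunk lemma, and it exposes that the $(K_2\uplus E_2)$-free hypothesis is redundant --- your argument never uses it, so the statement in fact holds for every graph with $\alpha(G)\geq 3$. The paper's route buys independence from the heavier Lemma~\ref{lem:chunk_i}, i.e.\ a flatter dependency structure among the lemmas, but nothing more.
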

\begin{proof}
    $(\Rightarrow)$ If $G \cong E_n$ then $G \wprod P_3$ is perfect, since $E_n \wprod P_3 \cong K_n \wprod (K_2 \uplus E_1)$ and the latter is perfect by Corollary~\ref{cor:GprodKn}. Moreover, one can see that if $G$ is connected and $(P_4, \text{cricket, dart, hourglass})$-free, $G \wprod H$ is perfect by taking complements and using Lemmas~\ref{lem:wmp_comp_perf}~and~\ref{lem:disjunstrscliqcpml} with Proposition~\ref{prop:twocliq_disjstrscliqs}.

    $(\Leftarrow)$ We prove the contrapositive, namely that $G \wprod P_3$ is not perfect, where $G$ is a $(K_2 \uplus E_2)$-free graph such that $\alpha(G) \geq 3$ that is disconnected, or contains an induced $P_4$, $\text{cricket}$, $\text{dart}$ or $\text{hourglass}$.
    If $G \cong P_4$, then $\alpha(G) = 2$ and we have a contradiction. If $G$ is an augment of $P_4$, then $G \wprod P_3$ is not perfect unless $G \cong C_5$, in which case $\alpha(G) =2$ and we have a contradiction. Thus, for any $G$ satisfying the conditions of the proposition containing an induced $P_4$, $G \wprod P_3$ is not perfect.
    For $X \in \{\text{cricket, dart, hourglass}\}$, $X \wprod P_3$ is not perfect by Lemma~\ref{lem:P3_cricket_dart_hourglass}, so for any $G$ such that $X \subseteq G$, $G \wprod P_3$ is not perfect. Suppose $G$ is disconnected and nonempty.
    Then, if $G$ has more than three connected components, it has an induced $K_2 \uplus E_2$, a contradiction.
    So $G$ must have two components. If both components of $G$ are cliques then $\alpha(G) = 2$ and we have a contradiction, so at least one components of $G$ contains a $P_3$. Now, since $(P_3 \uplus E_1) \wprod P_3$ is not perfect from Lemma~\ref{lem:P3_prods}, $G$ is not perfect and the lemma is proved.
\end{proof}

\begin{lem}
    \label{lem:chunk_v_iii}
    Suppose $G_z$ has an induced $P_3$ and contains a triangle. Moreover, suppose that $\overline{G_{\overline{z}}}$ is a disjoint union of stars with induced $P_3$.
    Then, $G_z \wprod G_{\overline{z}}$ is perfect if and only if $\overline{G_z}$ is connected and $(P_4, \text{cricket, dart, hourglass})$-free and $\overline{G_{\overline{z}}} \cong K_{1,r}$ (equiv. $G_z$ is a disjoint union of stars and cliques and $G_{\overline{z}} \cong K_1 \uplus K_r$).
\end{lem}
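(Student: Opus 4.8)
The plan is to prove both directions directly for $G_z \wprod G_{\overline{z}}$, exploiting the asymmetry of the hypotheses. The key preliminary observation is that since $\overline{G_{\overline{z}}}$ has an induced $P_3$, complementing that induced copy shows $G_{\overline{z}}$ has an induced $K_2 \uplus E_1$; this is the lever that lets me invoke Lemma~\ref{lem:K2uE1_products_nonperf} throughout. The easy ($\Leftarrow$) direction is then immediate: once $G_z$ is a disjoint union of stars and cliques and $G_{\overline{z}} \cong K_1 \uplus K_r$ (equivalently $\overline{G_{\overline{z}}} \cong K_{1,r}$), the product $G_z \wprod G_{\overline{z}} = G_z \wprod (K_1 \uplus K_r)$ is perfect by Proposition~\ref{prop:twocliq_disjstrscliqs}, taking the two-clique factor to be $K_1 \uplus K_r$ and using the commutativity of $\wprod$.

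For the ($\Rightarrow$) direction I would assume $G_z \wprod G_{\overline{z}}$ is perfect and extract the structure. Because $G_{\overline{z}}$ contains an induced $K_2 \uplus E_1$, Lemma~\ref{lem:K2uE1_products_nonperf} forces $G_z$ to be $\{K_{1,1,2}, Y, P_4 \uplus E_1, K_{2,2}\uplus E_1, P_5\}$-free, since any such induced subgraph in $G_z$ would produce an imperfect induced product. Combined with the standing hypotheses that $G_z$ has a triangle and an induced $P_3$, I would then argue $G_z$ must be a disjoint union of stars and cliques with at least two components: paw-freeness gives, by Lemma~\ref{lem:paw_free_graphs}, that each component is triangle-free or complete multipartite; a \emph{connected} $G_z$ is impossible, since a diamond-free complete multipartite component is a clique or complete bipartite by Lemma~\ref{lem:diamondfree_comp_multip}, where the former has no induced $P_3$ (Lemma~\ref{lem:p3free}) and the latter no triangle, while a triangle-free component contradicts the triangle hypothesis. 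Having forced two or more components, $(P_4 \uplus E_1)$- and $(K_{2,2}\uplus E_1)$-freeness make every component $P_4$-free and $K_{2,2}$-free; hence each complete multipartite component is a clique or a star, and each triangle-free component is complete multipartite by Lemma~\ref{lem:p4pawfree} and therefore, being triangle-free and $K_{2,2}$-free, a star. Thus $G_z$ is a disjoint union of stars and cliques.

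Finally I would pin down $G_{\overline{z}}$. Writing $B := \overline{G_{\overline{z}}}$, a disjoint union of stars with an induced $P_3$, suppose for contradiction $B$ has two or more components; then $B$ contains an induced $P_3 \uplus E_1$, so by Observation~\ref{obs:namedcomplements} the graph $G_{\overline{z}} = \overline{B}$ contains an induced paw $Y$. Since the structure already established for $G_z$ (two or more components together with an edge supplied by its triangle) yields an induced $K_2 \uplus E_1$ in $G_z$, Lemma~\ref{lem:K2uE1_products_nonperf} applied to $(K_2 \uplus E_1)\wprod Y$ shows $G_z \wprod G_{\overline{z}}$ is not perfect, a contradiction. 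Hence $B$ is a single star, $B \cong K_{1,r}$, that is $G_{\overline{z}} \cong K_1 \uplus K_r$; the equivalence with ``$\overline{G_z}$ connected and $(P_4, \text{cricket, dart, hourglass})$-free'' is exactly Lemma~\ref{lem:disjunstrscliqcpml}. I expect the main obstacle to be the structural classification in the middle step: verifying that the five forbidden subgraphs delivered by Lemma~\ref{lem:K2uE1_products_nonperf}, together with the triangle and induced-$P_3$ hypotheses, truly force a disjoint union of stars and cliques and nothing broader, with the triangle-free components handled correctly.
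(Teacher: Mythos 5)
Your proof is correct, but it follows a genuinely different route from the paper's. The paper works in the complement throughout: by Lemma~\ref{lem:wmp_comp_perf} it studies $\overline{G_z} \wprod \overline{G_{\overline{z}}}$, first disposing of the case where $\overline{G_z}$ has an induced $K_2 \uplus E_2$ (via Lemma~\ref{lem:P3_prods} against the induced $P_3$ in $\overline{G_{\overline{z}}}$), and then invoking the prepackaged Lemma~\ref{lem:K2uE2_free_P3} --- whose hypothesis $\alpha(\overline{G_z}) \geq 3$ is exactly what the triangle in $G_z$ supplies --- to force $\overline{G_z}$ to be connected and $(P_4, \text{cricket, dart, hourglass})$-free; a final split on whether $\overline{G_{\overline{z}}}$ is connected finishes via Proposition~\ref{prop:twocliq_disjstrscliqs} on one side and Lemma~\ref{lem:P3_prods} on the other. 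You instead stay in the original graphs: Lemma~\ref{lem:K2uE1_products_nonperf}, played against the induced $K_2 \uplus E_1$ in $G_{\overline{z}}$, yields five forbidden induced subgraphs in $G_z$, from which you rebuild the structural classification from scratch (Lemmas~\ref{lem:paw_free_graphs}, \ref{lem:diamondfree_comp_multip}, \ref{lem:p4pawfree}, \ref{lem:p3free}), using the triangle hypothesis combinatorially --- to kill the connected case and to supply the edge of a $K_2 \uplus E_1$ --- rather than as the independence-number bound the paper needs. Likewise, your handling of $G_{\overline{z}}$ (a disconnected $\overline{G_{\overline{z}}}$ gives an induced $P_3 \uplus E_1$, hence an induced paw in $G_{\overline{z}}$, hence imperfection via $(K_2 \uplus E_1) \wprod Y$) replaces the paper's appeal to $P_3 \wprod (P_3 \uplus E_1)$ in the complement. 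What your route buys is independence from Lemma~\ref{lem:K2uE2_free_P3} and from any cricket/dart/hourglass case analysis, with the complement formulation entering only through one final application of Lemma~\ref{lem:disjunstrscliqcpml}; the cost is re-deriving structure theory the paper had already encapsulated, so the paper's proof is shorter but leans on a heavier auxiliary lemma. The perfection direction is the same in both: Proposition~\ref{prop:twocliq_disjstrscliqs} with the two-clique factor $K_1 \uplus K_r$.
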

\begin{proof}
    Suppose $\overline{G_z}$ has an induced $K_2 \uplus E_2$. Then, $G_z \wprod G_{\overline{z}}$ is not perfect by Lemmas~\ref{lem:wmp_comp_perf}~and~\ref{lem:P3_prods}. Now suppose that $\overline{G_z}$ is $(K_2 \uplus E_2)$-free. By Lemma~\ref{lem:wmp_comp_perf} and Lemma~\ref{lem:K2uE2_free_P3} $G_z \wprod G_{\overline{z}}$ is not perfect if $\overline{G_z}$ is disconnected or contains an induced $P_4$, cricket, dart or hourglass.
    We thus assume that $\overline{G_z}$ is connected and $(P_4, \text{cricket, dart, hourglass})$-free. Now if $\overline{G_{\overline{z}}}$ is connected we have that $G_{\overline{z}} \cong K_1 \uplus K_r$ and $G_z$ is a disjoint union of cliques and stars by Lemma~\ref{lem:disjunstrscliqcpml}, and so by Proposition~\ref{prop:twocliq_disjstrscliqs} $G_z \wprod G_{\overline{z}}$ is perfect, yielding case (1).
    Now assume $\overline{G_{\overline{z}}}$ is disconnected. If $\overline{G_{z}} \cong K_n$, then $G_z \cong E_n$ contradicting the assumptions of the lemma.
    Finally, suppose $\overline{G_{z}} \not\cong K_n$, in which case $\overline{G_z}$ has an induced $P_3$ by Lemma~\ref{lem:p3free}.
    Recall that $\overline{G_{\overline{z}}}$ has an induced $P_3 \uplus E_1$. From Lemmas~\ref{lem:wmp_comp_perf}~and~\ref{lem:P3_prods} $G_z \wprod G_{\overline{z}}$ is not perfect. 
\end{proof}

\begin{lem}
    \label{lem:chunk_vi}
    Suppose $G_z$ has an induced $P_3$ and $\overline{G_{\overline{z}}}$ is a disjoint union of complete bipartites, containing an induced $K_{2,2}$.
    Then $G_z \wprod G_{\overline{z}}$ is perfect if and only if $\overline{G_z}$ is connected and $(P_4, \text{cricket, dart, hourglass})$-free and $\overline{G_{\overline{z}}} \cong K_{m,n}$ (equiv. $G_z$ is a disjoint union of stars and cliques, $G_{\overline{z}} \cong K_m \uplus K_n$).
\end{lem}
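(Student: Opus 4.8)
The plan is to pass to complements via Lemma~\ref{lem:wmp_comp_perf}, so that it suffices to decide when $\overline{G_z} \wprod \overline{G_{\overline{z}}}$ is perfect. This is the convenient side, because $\overline{G_{\overline{z}}}$ is by hypothesis a transparent object---a disjoint union of complete bipartite graphs with an induced $K_{2,2}$, hence also an induced $P_3$---while the induced $P_3$ in $G_z$ becomes an induced $\overline{P_3} = K_2 \uplus E_1$ in $\overline{G_z}$, using Observation~\ref{obs:namedcomplements} and the fact that complementation commutes with taking induced subgraphs. Since $G_z \wprod G_{\overline{z}}$ and $\overline{G_z} \wprod \overline{G_{\overline{z}}}$ are perfect or imperfect together by Lemma~\ref{lem:wmp_comp_perf}, throughout I would hunt for an induced imperfect subproduct in whichever of the two graphs is convenient, invoking the principle that an imperfect induced subproduct forces the whole product to be imperfect, together with the imperfect-product lemmas established above (notably Lemmas~\ref{lem:K2uE1_products_nonperf},~\ref{lem:P4_prods} and~\ref{lem:K22_prods}).

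First I would pin down $\overline{G_{\overline{z}}}$. If it had two or more components, then since one component already contains $K_{2,2}$ and another supplies an isolated vertex, $\overline{G_{\overline{z}}}$ would contain an induced $K_{2,2} \uplus E_1$; paired with the induced $K_2 \uplus E_1$ in $\overline{G_z}$ this exhibits an induced $(K_2 \uplus E_1) \wprod (K_{2,2} \uplus E_1)$, which is imperfect by Lemma~\ref{lem:K2uE1_products_nonperf}. Hence perfection forces $\overline{G_{\overline{z}}}$ to be connected, i.e.\ $\overline{G_{\overline{z}}} \cong K_{m,n}$ with $m,n \geq 2$ (the induced $K_{2,2}$ forcing $m,n \geq 2$), equivalently $G_{\overline{z}} \cong K_m \uplus K_n$.

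With $\overline{G_{\overline{z}}} \cong K_{m,n} \supseteq K_{2,2} \supseteq P_3$ fixed, I would constrain $G_z$ by excluding small obstructions. An induced paw in $G_z$ yields an induced $P_3 \uplus E_1$ in $\overline{G_z}$ (Observation~\ref{obs:namedcomplements}) and hence an induced $(P_3 \uplus E_1) \wprod K_{2,2}$, imperfect by Lemma~\ref{lem:K22_prods}; an induced diamond yields an induced $K_2 \uplus E_2$ in $\overline{G_z}$ and an imperfect $(K_2 \uplus E_2) \wprod K_{2,2}$, again by Lemma~\ref{lem:K22_prods}; and an induced $P_4$ (present in $G_z$ iff present in $\overline{G_z}$, as $P_4$ is self-complementary) yields an imperfect $P_4 \wprod K_{2,2}$ by Lemma~\ref{lem:P4_prods}. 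Thus $G_z$ must be $(P_4,\text{paw})$-free, so each of its components is complete multipartite by Lemma~\ref{lem:p4pawfree}, and diamond-free, so each component is a clique or a complete bipartite graph by Lemma~\ref{lem:diamondfree_comp_multip}. Were $G_z$ to have two or more components with one of them containing a $K_{2,2}$, then $G_z \supseteq K_{2,2} \uplus E_1$ while $G_{\overline{z}} = K_m \uplus K_n \supseteq K_2 \uplus E_1$, producing an imperfect $(K_{2,2} \uplus E_1) \wprod (K_2 \uplus E_1)$ by Lemma~\ref{lem:K2uE1_products_nonperf}. The survivors are then of two kinds: either $G_z$ is connected---necessarily a single complete bipartite $K_{a,b}$ with $a+b \geq 3$, since a clique has no induced $P_3$---or $G_z$ is a disjoint union of cliques and stars with at least two components, the latter being exactly the condition that $\overline{G_z}$ is connected and $(P_4,\text{cricket},\text{dart},\text{hourglass})$-free by Lemma~\ref{lem:disjunstrscliqcpml}.

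The converse is immediate: if $G_z$ is a disjoint union of stars and cliques and $G_{\overline{z}} \cong K_m \uplus K_n$, then $G_z \wprod G_{\overline{z}}$ is perfect by Proposition~\ref{prop:twocliq_disjstrscliqs}, establishing sufficiency. The delicate point---and the step I expect to be the main obstacle---is the connected survivor $G_z \cong K_{a,b}$: it passes every obstruction test above, yet $G_z \wprod (K_m \uplus K_n)$ is genuinely perfect, since by Lemma~\ref{lem:wmp_comp_perf} it is perfect iff $\overline{G_z} \wprod K_{m,n} = (K_a \uplus K_b) \wprod K_{m,n}$ is, which holds by Lemma~\ref{lem:KruKs_Kmn}. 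Such a connected complete bipartite factor has $\overline{G_z} = K_a \uplus K_b$ disconnected and so lies outside the stated characterisation; these factors are precisely those absorbed by the complete-bipartite branch of the global case analysis (Theorem~\ref{thm:main_intro}, cases~4 and~10), so the present lemma is to be read with that family already peeled off. Cleanly isolating this borderline, rather than any individual estimate, is where the real work lies.
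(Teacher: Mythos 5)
Your proposal is correct, and in outline it follows the same route as the paper: complement via Lemma~\ref{lem:wmp_comp_perf}, force $\overline{G_{\overline{z}}} \cong K_{m,n}$ by exhibiting an induced $(K_2 \uplus E_1) \wprod (K_{2,2} \uplus E_1)$ otherwise, clear away small obstructions (you excise paw, diamond, $P_4$ and $K_{2,2}\uplus E_1$ from $G_z$; the paper excises $P_3 \uplus E_1$, $P_4$, cricket, dart and hourglass from $\overline{G_z}$), and finish with Proposition~\ref{prop:twocliq_disjstrscliqs}.

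Where you genuinely depart from the paper is your ``delicate point'', and there you are right and the paper is wrong. The connected survivor $G_z \cong K_{a,b}$ is not a borderline in need of a charitable reading; it is a counterexample to the lemma as stated. Take $G_z = K_{2,2}$, $G_{\overline{z}} = 2K_2$: the hypotheses hold, and $G_z \wprod G_{\overline{z}}$ is perfect, because $\overline{G_z} \wprod \overline{G_{\overline{z}}} = (K_2 \uplus K_2) \wprod K_{2,2}$ is perfect by Lemma~\ref{lem:KruKs_Kmn} together with Lemma~\ref{lem:wmp_comp_perf}; yet $\overline{G_z} = 2K_2$ is disconnected, so the stated necessary condition fails. (Even a single star $G_z = P_3$ already breaks the parenthetical ``equivalence'', since Lemma~\ref{lem:disjunstrscliqcpml} requires two or more components.) The paper's own proof loses exactly these graphs at the sentence ``Either $\overline{G_z}$ is a disjoint union of cliques or has an induced $P_3 \uplus E_1$\dots\ In the former case, we contradict Lemma~\ref{lem:p3free}'': there is no contradiction there, because what the hypothesis forces in $\overline{G_z}$ is an induced $K_2 \uplus E_1$ (the complement of the $P_3$ in $G_z$), not an induced $P_3$, and a disjoint union of cliques such as $K_a \uplus K_b$ contains the former while being free of the latter. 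The correct conclusion is the one your argument produces: $\overline{G_{\overline{z}}} \cong K_{m,n}$ and $G_z$ is either a disjoint union of stars and cliques or a connected complete bipartite graph. As you note, the extra perfect pairs are instances of cases (4) and (10) of Theorem~\ref{thm:main}, so the theorem's statement survives; moreover, in the theorem's proof this lemma is only reached when both factors contain an induced $P_3$, which forces $\overline{G_{\overline{z}}}$ to be disconnected and that branch to contain no perfect products at all, so the flaw does not propagate. But the lemma itself, its proof, and the citing sentence ``only perfect in case (4)'' all need the amendment your analysis supplies.
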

\begin{proof}
    Observe that $\overline{G_z}$ has an induced $K_2 \uplus E_1$. Now suppose $\overline{G_{\overline{z}}}$ has two or more connected components. Then, $\overline{G_{\overline{z}}}$ has an induced $K_{2,2} \uplus E_1$. Thus, by Lemmas~\ref{lem:K2uE1_products_nonperf}~and~\ref{lem:wmp_comp_perf} $G_z \wprod G_{\overline{z}}$ is not perfect. Now suppose $\overline{G_{\overline{z}}}$ is connected, i.e. $\overline{G_{\overline{z}}} \cong K_{m,n}$. Moreover, suppose that $\overline{G_z}$ is not connected. Either $\overline{G_z}$ is a disjoint union of cliques or has an induced $P_3 \uplus E_1$, by Lemma~\ref{lem:p3free}. In the former case, we contradict Lemma~\ref{lem:p3free}. In the latter case Lemmas~\ref{lem:P3_prods}~and~\ref{lem:wmp_comp_perf} give that $G_z \wprod G_{\overline{z}}$ is not perfect.

    Now suppose that $\overline{G_z}$ is connected. If $\overline{G_z}$ contains an induced $P_4$, $G_z \wprod G_{\overline{z}}$ is not perfect by Lemmas~\ref{lem:P4_prods}~and~\ref{lem:wmp_comp_perf}. Let $\overline{G_z}$ be $P_4$-free. If $\overline{G_z}$ contains an induced cricket, dart, or hourglass then by Lemmas~\ref{lem:P3_cricket_dart_hourglass}~and~\ref{lem:wmp_comp_perf} $G_z \wprod G_{\overline{z}}$ is not perfect, since $K_{2,2}$ contains $P_3$. Otherwise, Lemma~\ref{lem:wmp_comp_perf} and Proposition~\ref{prop:twocliq_disjstrscliqs} give that $G_z \wprod G_{\overline{z}}$ is perfect.
\end{proof}

\begin{lem}
    \label{lem:chunk_vii}
    Suppose $G_z$ has an induced $P_3$ and $\overline{G_{\overline{z}}}$ is bipartite with induced $P_4$. Then $G_z \wprod G_{\overline{z}}$ is perfect if and only if $\overline{G_{\overline{z}}}\cong P_4$ and $\overline{G_z} \in \left\{ C_5, P_4, K_1 \uplus K_s \right\}$ (equiv. $G_{\overline{z}} \cong P_4$ and $G_z \in \left\{ C_5, P_4, K_{1, s} \right\}$).
\end{lem}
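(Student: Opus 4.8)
The plan is to first show that the hypotheses force $G_{\overline{z}} \cong P_4$, and then to determine exactly which graphs $G_z$ (carrying an induced $P_3$) make $G_z \wprod P_4$ perfect. Throughout I would lean on two facts: the complement symmetry of Lemma~\ref{lem:wmp_comp_perf}, and the observation that if $X$ is an induced subgraph of one factor and $Y$ of the other, then $X \wprod Y$ is an induced subgraph of the product, so a non-perfect $X \wprod Y$ makes the whole product non-perfect. A small but pivotal remark is that $P_3$ is an induced subgraph of $P_4$, so every imperfection result of the form $(\,\cdot\,) \wprod P_3$ upgrades automatically to $(\,\cdot\,) \wprod P_4$. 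I will work with the equivalent formulation $G_{\overline{z}} \cong P_4$, $G_z \in \{C_5, P_4, K_{1,s}\}$, using that $C_5$ and $P_4$ are self-complementary and $\overline{K_1 \uplus K_s} \cong K_{1,s}$.

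First I would pin down $G_{\overline{z}}$. Writing $B := \overline{G_{\overline{z}}}$, Lemma~\ref{lem:wmp_comp_perf} gives that $G_z \wprod G_{\overline{z}}$ is perfect iff $\overline{G_z} \wprod B$ is; and since $G_z$ has an induced $P_3$, its complement $\overline{G_z}$ has an induced $K_2 \uplus E_1$. Now $B$ is bipartite with an induced $P_4$, so if $\abs{V(B)} \geq 5$ then the $P_4$ together with any fifth vertex exhibits an induced bipartite augment of $P_4$ inside $B$. Combining this with the $K_2 \uplus E_1$ in $\overline{G_z}$ and Lemma~\ref{lem:Bipartite_augments_P4_K2uE1} shows $\overline{G_z} \wprod B$ is not perfect. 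Hence perfection forces $\abs{V(B)} = 4$, and a four-vertex graph containing an induced $P_4$ is $P_4$ itself; thus $B \cong P_4$, i.e. $G_{\overline{z}} \cong P_4$.

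With $G_{\overline{z}} \cong P_4$ fixed, sufficiency of the three listed graphs is quick: $P_4 \wprod C_5$ and $P_4 \wprod P_4$ are induced subgraphs of $C_5 \wprod C_5$, which is perfect by Lemma~\ref{lem:C5perf_prods}, while $P_4 \wprod K_{1,s}$ is perfect by Lemma~\ref{lem:P4wprodK1r}. For necessity I would run a case analysis on $G_z$ mirroring Lemma~\ref{lem:C5perf_prods}. If $G_z$ is triangle-free and non-bipartite it has an induced odd hole (Lemma~\ref{lem:triangle_free_odd_hole}); if its largest odd hole has length $\geq 7$ it contains an induced $P_5$ (imperfect via $P_5 \wprod P_3$, Lemma~\ref{lem:P3_prods}), whereas if its largest odd hole is a $C_5$ and $G_z \not\cong C_5$ it contains a triangle-free augment of $C_5$ (imperfect via Lemma~\ref{lem:C5trifree_augments_C5_P3}), leaving only $G_z \cong C_5$. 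If $G_z$ is triangle-free and bipartite, Lemma~\ref{lem:compbipP4} splits it into the case where it has an induced $P_4$ (a proper bipartite augment of $P_4$ is imperfect via Lemma~\ref{lem:Bipartite_augments_P4_P3}, leaving $G_z \cong P_4$) and the case where it is a disjoint union of complete bipartite graphs (an induced $K_{2,2}$ is imperfect via Lemma~\ref{lem:P4_prods}; otherwise $G_z$ is a disjoint union of stars, where two or more components yield an induced $P_3 \uplus E_1$ and hence imperfection, leaving only the single star $G_z \cong K_{1,s}$). Each surviving case is exactly one of the claimed graphs, and every imperfection step uses that $P_3$ is induced in $P_4$ to pass from $\wprod P_3$ to $\wprod P_4$.

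The remaining and most delicate case is when $G_z$ contains a triangle; here I expect the main obstacle, since there is no off-the-shelf result for $K_3 \wprod P_4$ (indeed $K_3 \wprod P_3$ is perfect by Corollary~\ref{cor:GprodKn}, so the $P_3 \subseteq_{\mathrm{ind}} P_4$ shortcut gives nothing). I would argue structurally. If $G_z \wprod P_4$ were perfect then, by Lemma~\ref{lem:P4_prods}, $G_z$ is $(2K_2, \text{paw}, \text{diamond})$-free. Fix a triangle $T = \{x,y,z\}$. A vertex adjacent to exactly one vertex of $T$ creates an induced paw and one adjacent to exactly two creates an induced diamond, so every vertex outside $T$ is adjacent to all of $T$ or to none of it. Two vertices each adjacent to all of $T$ must be mutually adjacent (else a diamond), so the vertices adjacent to all of $T$ together with $T$ form a clique $C$; and a vertex adjacent to none of $T$ cannot be adjacent to any vertex of $C \setminus T$ either (else, with two vertices of $T$, a paw). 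Hence $G_z = C \uplus G_z[N]$, a disjoint union of the clique $C \supseteq T$ with the induced subgraph on the remaining vertices $N$. Since a clique has no induced $P_3$, the hypothesised $P_3$ lies entirely in $N$; an edge of this $P_3$ together with an edge of the triangle in $C$ then forms an induced $2K_2$, contradicting $2K_2$-freeness. Thus a triangle in $G_z$ always destroys perfection, which closes the case analysis and completes the proof.
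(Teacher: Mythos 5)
Your proof is correct, and while it reaches the same intermediate milestone as the paper (the identical first step forcing $\overline{G_{\overline{z}}} \cong P_4$ by playing a bipartite augment of $P_4$ against the induced $K_2 \uplus E_1$ in $\overline{G_z}$, via Lemmas~\ref{lem:Bipartite_augments_P4_K2uE1} and~\ref{lem:wmp_comp_perf}), your necessity analysis follows a genuinely different route. The paper stays on the complement side throughout: it first excludes an induced paw in $\overline{G_z}$ (Lemma~\ref{lem:P4_prods}), then invokes the paw-free structure theorem (Lemma~\ref{lem:paw_free_graphs}) to decompose $\overline{G_z}$ into complete multipartite and triangle-free components, and finishes the multipartite case by excluding diamonds, $K_{2,2}$, star components and $2K_2$ (Lemmas~\ref{lem:diamondfree_comp_multip}, \ref{lem:P4_prods}, \ref{lem:P3_prods}), leaving $\overline{G_z} \cong K_1 \uplus K_s$. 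You instead work on the direct side with $G_z \wprod P_4$, split on whether $G_z$ contains a triangle, and systematically exploit the observation that $P_3$ is induced in $P_4$, so every $(\,\cdot\,) \wprod P_3$ imperfection lemma upgrades for free; your triangle-free branches then mirror the case analysis inside Lemma~\ref{lem:C5perf_prods}. The genuinely new ingredient is your triangle case: rather than citing the paw-free structure theorem, you derive by hand that in a $(2K_2, \text{paw}, \text{diamond})$-free graph the component containing a triangle is a clique with no edges to the rest, so the hypothesised induced $P_3$ lies in a different component and one of its edges together with an edge of the triangle induces a $2K_2$ --- a contradiction. Each approach has its merits: the paper's reuses its existing lemma infrastructure and runs structurally parallel to the other chunk lemmas, while yours is more self-contained and makes transparent exactly why a triangle is fatal once an induced $P_3$ is present, at the modest price of re-deriving a fragment of what Lemmas~\ref{lem:paw_free_graphs} and~\ref{lem:diamondfree_comp_multip} already provide.
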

\begin{proof}
    Observe that $\overline{G_z}$ has an induced $K_2 \uplus E_1$. Let us suppose that $\overline{G_{\overline{z}}} \not\cong P_4$. Then, by Lemmas~\ref{lem:Bipartite_augments_P4_K2uE1}~and~\ref{lem:wmp_comp_perf} $G_z \wprod G_{\overline{z}}$ is not perfect. Thus, we suppose that $\overline{G_{\overline{z}}} \cong P_4$. Suppose that $\overline{G_z}$ has an induced paw. Then, by Lemmas~\ref{lem:P4_prods}~and~\ref{lem:wmp_comp_perf} $G_z \wprod G_{\overline{z}}$ is not perfect. Now let $\overline{G_z}$ be paw-free, that is, by Lemma~\ref{lem:paw_free_graphs} $\overline{G_z}$ is a disjoint union of complete multipartite and triangle-free graphs.

    First, assume that $\overline{G_z}$ has triangle-free component $X$, that is not complete bipartite. If $X$ is bipartite, $G_z \wprod G_{\overline{z}}$ is perfect if and only if $\overline{G_z} \cong P_4$, from Lemmas~\ref{lem:Bipartite_augments_P4_P3},~\ref{lem:C5perf_prods}~and~\ref{lem:wmp_comp_perf}. Now suppose $X$ is nonbipartite. By Lemma~\ref{lem:triangle_free_odd_hole} it contains an odd hole. If the largest odd hole in $X$ has seven or more vertices, $X$ contains an induced $P_5$ and so by Lemmas~\ref{lem:P3_prods}~and~\ref{lem:wmp_comp_perf} $G_z \wprod G_{\overline{z}}$ is not perfect. Now suppose the largest odd hole in $X$ is a $C_5$. By Lemmas~\ref{lem:C5trifree_augments_C5_P3},~\ref{lem:C5perf_prods}~and~\ref{lem:wmp_comp_perf} $G_z \wprod G_{\overline{z}}$ is perfect if and only if $\overline{G_z} \cong C_5$.

    Now assume $\overline{G_z}$ is a disjoint union of complete multipartites. $\overline{G_z}$ is disconnected, for otherwise; by Lemma~\ref{cor:compmulti_K2uE1free} it is $(K_2 \uplus E_1)$-free. However, $\overline{G_z}$ has an induced $K_2 \uplus E_1$ by assumption. If $\overline{G_z}$ has three or more components, it contains an induced $K_2 \uplus E_2$, and so by Lemmas~\ref{lem:P4_prods}~and~\ref{lem:wmp_comp_perf} $G_z \wprod G_{\overline{z}}$ is not perfect. We thus assume $\overline{G_z}$ has two connected components. By Lemma~\ref{lem:diamondfree_comp_multip} $\overline{G_z}$ has an induced diamond or is a disjoint union of cliques and complete bipartite graphs. In the former case, $G_z \wprod G_{\overline{z}}$ is not perfect by Lemmas~\ref{lem:P4_prods}~and~\ref{lem:wmp_comp_perf}. We now assume the latter. If $\overline{G_z}$ contains $K_{2,2}$ as an induced subgraph, $G_z \wprod G_{\overline{z}}$ is not perfect by Lemmas~\ref{lem:P4_prods}~and~\ref{lem:wmp_comp_perf}. Moreover, if $\overline{G_z}$ contains a star (that is not also a clique) then it contains $P_3$ and thus $P_3 \uplus E_1$, since it has two components. Lemmas~\ref{lem:P3_prods}~and~\ref{lem:wmp_comp_perf} give us that $G_z \wprod G_{\overline{z}}$ is not perfect in this case. Assume $\overline{G_z}$ is a disjoint union of cliques. If $\overline{G_z}$ contains $2K_2$, $G_z \wprod G_{\overline{z}}$ is not perfect from Lemmas~\ref{lem:P4_prods}~and~\ref{lem:wmp_comp_perf}. This leaves $\overline{G_z} \cong K_1 \uplus K_s$, in which case $G_z \wprod G_{\overline{z}}$ is perfect from Corollary~\ref{cor:P4wprodK1uplusKr}. 
\end{proof}

\begin{lem}
    \label{lem:chunk_viii}
    Suppose $G_z$ has an induced $P_3$ and $\overline{G_{\overline{z}}}$ is nonbipartite and triangle-free. Then, $G_z \wprod G_{\overline{z}}$ is perfect if and only if $\overline{G_{\overline{z}}}\cong C_5$ and $\overline{G_z} \in \left\{ K_2 \uplus E_1, P_4, C_5 \right\}$ (equiv. $G_{\overline{z}} \cong C_5$ and $G_z \in \left\{ P_3, P_4, C_5 \right\}$).
\end{lem}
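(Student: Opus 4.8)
The plan is to reduce everything to lemmas already in hand by passing to complements, in the spirit of Lemma~\ref{lem:C5perf_prods}. By Lemma~\ref{lem:wmp_comp_perf}, $G_z \wprod G_{\overline{z}}$ is perfect if and only if $\overline{G_z} \wprod \overline{G_{\overline{z}}}$ is perfect, so I would first recast the hypotheses in the complement. Since $G_z$ has an induced $P_3$ and $P_3$ is the complement of $K_2 \uplus E_1$, the graph $\overline{G_z}$ has an induced $K_2 \uplus E_1$; by hypothesis $\overline{G_{\overline{z}}}$ is nonbipartite and triangle-free, so Lemma~\ref{lem:triangle_free_odd_hole} supplies an induced odd hole of order $\geq 5$ in $\overline{G_{\overline{z}}}$. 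This places the whole argument in the regime of $K_2 \uplus E_1$ interacting with a triangle-free nonbipartite graph, which is exactly what Lemmas~\ref{lem:C5trifree_augments_C5_K2uE1} and~\ref{lem:K2uE1_products_nonperf} control.

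For the branch that produces perfection, I would isolate the case $\overline{G_{\overline{z}}} \cong C_5$. As $C_5$ is self-complementary this means $G_{\overline{z}} \cong C_5$, and since $G_z$ contains an induced $P_3$, Lemma~\ref{lem:C5perf_prods} applies verbatim: $G_z \wprod C_5$ is perfect if and only if $G_z \in \{P_3, P_4, C_5\}$. Complementing, this is precisely $\overline{G_z} \in \{K_2 \uplus E_1, P_4, C_5\}$, using $\overline{P_3} = K_2 \uplus E_1$ together with the self-complementarity of $P_4$ and $C_5$. This single invocation supplies the entire ``if and only if'' content on the $\overline{G_{\overline{z}}} \cong C_5$ branch, and recovers both equivalent formulations of the statement.

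It then remains to show that whenever $\overline{G_{\overline{z}}} \not\cong C_5$ the product is imperfect, which is exactly the case where the right-hand condition fails. The odd hole from Lemma~\ref{lem:triangle_free_odd_hole} is either a $C_5$ or an odd cycle of order $\geq 7$. If $\overline{G_{\overline{z}}}$ contains an induced $C_5$ but is not isomorphic to it, then it has at least six vertices, so the subgraph induced on that $C_5$ together with one further vertex is a triangle-free augment of $C_5$; Lemma~\ref{lem:C5trifree_augments_C5_K2uE1} gives that its weak modular product with $K_2 \uplus E_1$ is imperfect. Otherwise $\overline{G_{\overline{z}}}$ has no induced $C_5$, its odd hole therefore has order $\geq 7$, and five consecutive vertices of that hole induce a $P_5$, so Lemma~\ref{lem:K2uE1_products_nonperf} gives that $(K_2 \uplus E_1) \wprod P_5$ is imperfect. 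In either case, since $\overline{G_z}$ contains an induced $K_2 \uplus E_1$, the observation that imperfection is inherited from induced-subgraph products shows $\overline{G_z} \wprod \overline{G_{\overline{z}}}$ is imperfect, whence $G_z \wprod G_{\overline{z}}$ is imperfect by Lemma~\ref{lem:wmp_comp_perf}.

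The main obstacle I anticipate is bookkeeping rather than conceptual: checking that the case split on the odd hole is genuinely exhaustive and disjoint (the three branches being $\overline{G_{\overline{z}}} \cong C_5$, $\overline{G_{\overline{z}}}$ containing an induced $C_5$ with extra vertices, and $\overline{G_{\overline{z}}}$ with no induced $C_5$ but a hole of order $\geq 7$), and confirming that the six-vertex subgraph extracted in the middle branch really is a triangle-free augment of $C_5$, which holds because induced subgraphs of a triangle-free graph are triangle-free. I would also be careful with the complement dictionary ($\overline{P_3} = K_2 \uplus E_1$, $\overline{P_4} = P_4$, $\overline{C_5} = C_5$) so that the perfection branch transcribes correctly between the two equivalent forms stated in the lemma.
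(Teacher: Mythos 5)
Your proposal is correct and follows essentially the same route as the paper's proof: both pass to complements via Lemma~\ref{lem:wmp_comp_perf}, extract the induced $K_2 \uplus E_1$ in $\overline{G_z}$ and the odd hole in $\overline{G_{\overline{z}}}$ via Lemma~\ref{lem:triangle_free_odd_hole}, kill the non-$C_5$ cases with Lemmas~\ref{lem:C5trifree_augments_C5_K2uE1} and~\ref{lem:K2uE1_products_nonperf} (the $P_5$ entry), and settle the $\overline{G_{\overline{z}}} \cong C_5$ case with Lemma~\ref{lem:C5perf_prods}. Your write-up is in fact slightly more explicit than the paper's about why the case split is exhaustive and why the six-vertex subgraph is a triangle-free augment of $C_5$, but the argument is the same.
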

\begin{proof}
    Note that $\overline{G_z}$ has an induced $K_2 \uplus E_1$. By Lemma~\ref{lem:triangle_free_odd_hole}, $\overline{G_{\overline{z}}}$ has an odd hole. If the largest odd hole has seven or more vertices, $\overline{G_{\overline{z}}}$ contains an induced $P_5$. Then, by Lemmas~\ref{lem:K2uE1_products_nonperf}~and~\ref{lem:wmp_comp_perf} $G_z \wprod G_{\overline{z}}$ is not perfect. Now suppose $\overline{G_{\overline{z}}}$ contains an induced $C_5$. By Lemmas~\ref{lem:C5trifree_augments_C5_K2uE1}~and~\ref{lem:wmp_comp_perf}, $G_z \wprod G_{\overline{z}}$ is not perfect if $\overline{G_{\overline{z}}} \not\cong C_5$. So we assume $\overline{G_{\overline{z}}} \cong C_5$, in which case $G_{\overline{z}} \cong C_5$. Then, Lemmas~\ref{lem:C5perf_prods}~and~\ref{lem:wmp_comp_perf} give the result. 
\end{proof}

\setcounter{thm}{0}
\begin{thm}[Restated for convenience]
    \label{thm:main}
    The graph $G = G_0 \wprod G_1$ is perfect if and only if one of the following holds:
    \begin{enumerate}
     \item $G_z \in \{K_1, K_2, E_2\}$, $G_{\overline{z}}$ arbitrary;
     \item $G_z \cong P_4$, $G_{\overline{z}} \in \{ K_{1,r}, K_{r} \uplus K_1, P_4 \}$;
     \item $G_z \cong C_5$, $G_{\overline{z}} \in \{ P_3, K_2 \uplus E_1, P_4, C_5 \}$;
     \item $G_z \cong K_r \uplus K_s$, $G_{\overline{z}}$ is a disjoint union of stars and cliques;
     \item $G_z \cong K_{m,n}$, $G_{\overline{z}}$ is connected and $(P_4, \text{cricket}, \text{dart}, \text{hourglass})$-free;
     \item $G_z \cong K_n$, $G_{\overline{z}}$ $(\text{odd hole, paw})$-free;
     \item $G_z \cong E_n$, $G_{\overline{z}}$ $(\text{odd antihole, co-paw})$-free;
     \item $G_z$, $G_{\overline{z}}$ are complete multipartite;
     \item $G_z$, $G_{\overline{z}}$ are disjoint unions of cliques;
     \item $G_z \cong K_r \uplus K_s$, $G_{\overline{z}} \cong K_{m,n}$;
    \end{enumerate}
    for any $m,n,r,s,z$, where $m,n,r,s \in \mathbb{N}$, and $z \in \{0,1\}$, with its (Boolean) negation denoted by $\overline{z}$.
\end{thm}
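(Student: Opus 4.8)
The plan is to prove both implications, leaning throughout on two symmetries: the weak modular product is commutative, so the factors may be freely relabelled (this is the purpose of the index $z$), and by Lemma~\ref{lem:wmp_comp_perf} the product $G_z \wprod G_{\overline z}$ is perfect if and only if $\overline{G_z}\wprod\overline{G_{\overline z}}$ is, so one may pass to complements. Because complementation interchanges ``disjoint union of cliques'' with ``complete multipartite'' — equivalently, it interchanges the forbidden induced subgraphs $P_3$ and $K_2\uplus E_1$ (Lemmas~\ref{lem:p3free} and~\ref{lem:disjoint_compl_multi}) — these two symmetries collapse many a priori distinct configurations into a handful. The other workhorse is the induced-subgraph principle: if some induced sub-product is imperfect then the whole product is, which lets the imperfect-product lemmas of the previous section kill entire families at once.

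For the direction $(\Leftarrow)$ I would check that each of the ten families is perfect by quoting the assembled perfection results: case 1 from Corollary~\ref{cor:GprodKn} with $n\le 2$ (and Lemma~\ref{lem:wmp_comp_perf} for $E_2$); case 2 from Lemma~\ref{lem:P4wprodK1r}, Corollary~\ref{cor:P4wprodK1uplusKr}, and the observation that $P_4\wprod P_4$ embeds as an induced subgraph of the perfect $C_5\wprod C_5$ (Lemma~\ref{lem:C5perf_prods}); case 3 from Lemma~\ref{lem:C5perf_prods} and Corollary~\ref{cor:C5prodK2uE1}; case 4 from Proposition~\ref{prop:twocliq_disjstrscliqs}, with case 5 its complement via Lemmas~\ref{lem:wmp_comp_perf} and~\ref{lem:disjunstrscliqcpml}; cases 6 and 7 from Corollary~\ref{cor:GprodKn} and its complement; case 8 from Corollary~\ref{cor:completemultiprodperf}; case 9 from Lemma~\ref{lem:GHdisj_cliqs}; and case 10 from Lemma~\ref{lem:KruKs_Kmn}.

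The substance is $(\Rightarrow)$: assuming $G_0\wprod G_1$ perfect, I would run a decision tree. First dispose of case 1, so both factors have at least three vertices. The top split is whether some factor is a disjoint union of cliques, i.e. $P_3$-free (Lemma~\ref{lem:p3free}); if so, fix $z$ with $G_z$ $P_3$-free. When $G_{\overline z}$ is also $P_3$-free the product is perfect by Lemma~\ref{lem:GHdisj_cliqs} (case 9). When $G_{\overline z}$ contains an induced $P_3$, split on the number of components of $G_z$: if $G_z\cong K_r\uplus K_s$ has exactly two components it contains $K_2\uplus E_1$, so any paw in $G_{\overline z}$ forces imperfection through Lemma~\ref{lem:K2uE1_products_nonperf}, leaving $G_{\overline z}$ paw-free and Lemma~\ref{lem:chunk_i} to apply; otherwise Lemma~\ref{lem:chunk_ii} applies directly. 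These leaves yield cases 2, 3, 4, 6, 7 and 10.

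The hardest regime is when neither factor is a disjoint union of cliques, so both contain an induced $P_3$. Here I would repeatedly apply the imperfect-product lemmas to strip offending configurations and reduce each surviving pair to one covered by Lemmas~\ref{lem:chunk_iii_i}--\ref{lem:chunk_viii}. These are organised by the paw structure of the factors together with the shape of the complement $\overline{G_{\overline z}}$: when $G_{\overline z}$ is complete multipartite or connected and $(P_4,\text{cricket},\text{dart},\text{hourglass})$-free (so $\overline{G_{\overline z}}$ is a disjoint union of cliques and stars) one invokes Lemmas~\ref{lem:chunk_iii_i}--\ref{lem:chunk_iii_iii} and~\ref{lem:chunk_iv}, split on whether the other factor is paw-free; otherwise $\overline{G_{\overline z}}$ is (essentially) triangle-free and is classified as nonbipartite triangle-free, bipartite containing $P_4$, a disjoint union of complete bipartites containing $K_{2,2}$, or a disjoint union of stars and edges, handled by Lemmas~\ref{lem:chunk_viii}, \ref{lem:chunk_vii}, \ref{lem:chunk_vi} and~\ref{lem:chunk_v_i}--\ref{lem:chunk_v_iii} respectively. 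The main obstacle is the bookkeeping of this regime: I must check that the chunk lemmas' hypotheses are jointly exhaustive over all pairs with both factors containing $P_3$, that overlapping leaves agree, and that once every ``perfect'' conclusion is pushed back through the commutation and complementation symmetries the surviving families coalesce into exactly the ten listed cases — none missing, none spurious. Verifying this exhaustiveness and final reconciliation, rather than any single product computation, is where the real care lies.
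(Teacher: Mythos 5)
Your proposal mirrors the paper's proof in architecture and in nearly every citation: the perfection direction is assembled from exactly the same ingredients (including realising $P_4 \wprod P_4$ as an induced subgraph of the perfect $C_5 \wprod C_5$ via Lemma~\ref{lem:C5perf_prods}), and the top of your decision tree for the converse --- dispose of case (1), split on whether some factor is $P_3$-free, then close that branch with Lemma~\ref{lem:GHdisj_cliqs}, Lemma~\ref{lem:chunk_ii} for $k \neq 2$ components, and Lemma~\ref{lem:K2uE1_products_nonperf} plus Lemma~\ref{lem:chunk_i} for $K_r \uplus K_s$ --- is precisely the paper's.

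The gap is where you yourself place it, and it is genuine rather than routine bookkeeping. In the regime where both factors contain an induced $P_3$, you assert that after stripping by the imperfection lemmas every surviving pair has $G_{\overline z}$ complete multipartite or connected and $(P_4,\text{cricket},\text{dart},\text{hourglass})$-free, or else has $\overline{G_{\overline z}}$ ``(essentially)'' triangle-free; no argument is offered for this trichotomy, and proving it is exactly the content of the hard direction. The paper's organising device, missing from your plan, is a split on the independence number of $G_{\overline z}$ (after discarding empty $G_{\overline z}$, which is case (7)). If $\alpha(G_{\overline z}) \geq 3$, then either $G_{\overline z}$ contains an induced $K_2 \uplus E_2$, whence the product is imperfect by Lemma~\ref{lem:P3_prods} since $G_z$ contains $P_3$, or $G_{\overline z}$ is $(K_2 \uplus E_2)$-free and Lemma~\ref{lem:K2uE2_free_P3} forces it to be connected and $(P_4,\text{cricket},\text{dart},\text{hourglass})$-free on pain of imperfection --- note that Lemma~\ref{lem:K2uE2_free_P3} carries $\alpha \geq 3$ in its hypothesis, so it cannot even be invoked without this split; this is what licenses Lemmas~\ref{lem:chunk_iv} and~\ref{lem:chunk_iii_i}--\ref{lem:chunk_iii_iii}. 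If $\alpha(G_{\overline z}) = 1$, then $G_{\overline z}$ is complete and Corollary~\ref{cor:GprodKn} yields case (6). If $\alpha(G_{\overline z}) = 2$, then $\omega(\overline{G_{\overline z}}) = 2$, so $\overline{G_{\overline z}}$ is exactly --- not ``essentially'' --- triangle-free, which is what justifies your four-way classification feeding Lemmas~\ref{lem:chunk_viii}, \ref{lem:chunk_vii}, \ref{lem:chunk_vi} and~\ref{lem:chunk_v_i}--\ref{lem:chunk_v_iii}; the further split between Lemma~\ref{lem:chunk_v_ii} and Lemma~\ref{lem:chunk_v_iii} is again by a clique/independence count, namely whether $G_z$ contains a triangle. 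Without this pivot your ``otherwise'' branch is unsupported and the deferred exhaustiveness check is the whole theorem; with it, your plan becomes the paper's proof. Your instinct that the care lies here is sound: even with the pivot in hand, the leaves must be checked one by one (for instance, a disjoint union of stars that is neither $rK_2$ nor contains an induced $P_3$, such as $K_2 \uplus K_1$, sits awkwardly between Lemma~\ref{lem:chunk_v_i} and Lemmas~\ref{lem:chunk_v_ii}--\ref{lem:chunk_v_iii}), so the verification cannot be waved through as you propose.
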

\begin{proof}
    $(\Rightarrow)$ We prove the forward direction for each case in turn:
    Case (1) follows directly from Corollary~\ref{cor:GprodKn} and Lemma~\ref{lem:wmp_comp_perf}. For case (2), perfection of $P_4 \wprod P_4$ follows as a direct corollary of Lemma~\ref{lem:C5perf_prods}, as $C_5$ contains $P_4$ as an induced subgraph. $P_4 \wprod K_{1,r}$ and $P_4 \wprod (K_1 \uplus K_r)$ are perfect by Lemma~\ref{lem:P4wprodK1r} and Corollary~\ref{cor:P4wprodK1uplusKr} respectively.
    Case (3) follows directly from Lemma~\ref{lem:C5perf_prods} and Corollary~\ref{cor:C5prodK2uE1}.
    Proposition~\ref{prop:twocliq_disjstrscliqs} gives case (4).
    Proposition~\ref{prop:twocliq_disjstrscliqs} taken with Lemmas~\ref{lem:wmp_comp_perf}~and~\ref{lem:disjunstrscliqcpml} give case (5).
    Corollary~\ref{cor:GprodKn} yields (6).
    For case (7), combine Corollary~\ref{cor:GprodKn} and Lemma~\ref{lem:wmp_comp_perf}.
    Corollary~\ref{cor:completemultiprodperf} gives (8).
    Lemma~\ref{lem:GHdisj_cliqs} yields (9).
    Finally, (10) is proven by Lemma~\ref{lem:KruKs_Kmn}.    
    
    $(\Leftarrow)$ 
    We show that the weak modular product of any pair of graphs not falling into cases (1)--(10) is not perfect.
    First suppose $G_z$ is $P_3$-free. Then, by Lemma~\ref{lem:p3free} $G_z$ is a disjoint union of cliques.
    If $G_{\overline{z}}$ is a disjoint union of cliques then we have case (9).
    Thus we suppose $G_{\overline{z}}$ is not a disjoint union of cliques, so has an induced $P_3$ by Lemma~\ref{lem:p3free}.
    If $G_z$ has $k$ connected components, where $k\in \mathbb{N} \setminus \{2\}$, by Lemma~\ref{lem:chunk_ii}, the only cases where $G_z \wprod G_{\overline{z}}$ is perfect belong to cases (1), (6) and (7). 
    If $k = 2$, either $G_z \cong E_2$, and we have case (1), or $G_z$ is nonempty and contains an induced $K_2 \uplus E_1$.
    If $G_{\overline{z}}$ has an induced paw, then $G_z \wprod G_{\overline{z}}$ is not perfect by Lemma~\ref{lem:K2uE1_products_nonperf}.
    If $G_{\overline{z}}$ is paw-free then by Lemma~\ref{lem:chunk_i} the only pairs which give a perfect product fall under cases (3), (2), (10) and (4).
    This completes the proof for the case when $G_z$ is $P_3$-free.

    Now, suppose $G_z$ contains an induced $P_3$. If $G_{\overline{z}}$ is empty we have a perfect product only under the conditions of case (7), so assume $G_{\overline{z}}$ is nonempty.
    Furthermore, suppose $\alpha(G_{\overline{z}}) \geq 3$. By Lemma~\ref{lem:P3_prods}, $G_z \wprod G_{\overline{z}}$ is not perfect if $G_{\overline{z}}$ has an induced $K_2 \uplus E_2$, as $G_z$ contains an induced $P_3$. We thus consider the case when $G_{\overline{z}}$ is $(K_2 \uplus E_2)$-free.
    By Lemma~\ref{lem:K2uE2_free_P3}, $G_z \wprod G_{\overline{z}}$ is not perfect if $G_{\overline{z}}$ is disconnected or contains an induced $P_4$, cricket, dart, or hourglass.
    So we assume $G_{\overline{z}}$ is connected and $(P_4, \text{cricket, dart, hourglass})$-free.
    Now, if $G_z$ has an induced paw, by Lemma~\ref{lem:chunk_iv} the only cases when $G_z \wprod G_{\overline{z}}$ is perfect belong to cases (5) and (1).
    We thus assume $G_z$ is paw free, so is a disjoint union of complete multipartite and triangle-free graphs by Lemma~\ref{lem:paw_free_graphs}.
    If $G_{\overline{z}} \cong K_n$, it is $P_3$-free by Lemma~\ref{lem:p3free}. Then, we have a perfect product if and only if the conditions of case (6) are satisfied.
    Suppose $G_{\overline{z}}$ has induced $P_3$. Furthermore, if $G_{\overline{z}}$ has an induced paw, by Lemma~\ref{lem:chunk_iii_i} $G_z \wprod G_{\overline{z}}$ is perfect only in case (5).
    Now let $G_{\overline{z}}$ be paw-free, so by Lemma~\ref{lem:p4pawfree} is complete multipartite.
    We now suppose $G_z$ has a triangle-free component $X$ that is not complete bipartite, and so has an induced $P_4$ by Lemmas~\ref{lem:compbipP4}~and~\ref{lem:triangle_free_odd_hole}. We then have $G_z \wprod G_{\overline{z}}$ is perfect only in cases (3), (1) and (2) from Lemma~\ref{lem:chunk_iii_ii}.
    Assume $G_z$ is $P_4$-free, so by Lemma~\ref{lem:p4pawfree} is a disjoint union of complete multipartites.
    Then, from Lemma~\ref{lem:chunk_iii_iii} $G_z \wprod G_{\overline{z}}$ only in cases (6), (10) and (8).
    This covers the case when $G_z$ contains an induced $P_3$ and $G_{\overline{z}}$ is nonempty with $\alpha(G_{\overline{z}}) \geq 3$.

    We now consider the case when $G_z$ contains an induced $P_3$ and $G_{\overline{z}}$ is nonempty with $\alpha(G_{\overline{z}}) \leq 2$. If $\alpha(G_{\overline{z}}) = 1$, $G_{\overline{z}} \cong K_n$ and by Corollary~\ref{cor:GprodKn} is perfect if and only if $G_z$ is $(\text{odd hole, paw})$-free, falling into case (6). We are left with $\alpha(G_{\overline{z}}) = 2$ or, equivalently, $\omega(\overline{G_{\overline{z}}}) = 2$.
    Since $\omega(\overline{G_{\overline{z}}}) = 2$, $\overline{G_{\overline{z}}}$ is triangle-free. Let us first consider the case when $\overline{G_{\overline{z}}}$ is nonbipartite. Then, by Lemma~\ref{lem:chunk_viii} $G_z \wprod G_{\overline{z}}$ is only perfect in case (3).
    Now suppose $\overline{G_{\overline{z}}}$ is bipartite. If $\overline{G_{\overline{z}}}$ contains an induced $P_4$, by Lemma~\ref{lem:chunk_vii} $G_z \wprod G_{\overline{z}}$ is only perfect in cases (2) and (3). 
    Thus we suppose $\overline{G_{\overline{z}}}$ is a disjoint union of complete bipartite graphs, as from Lemma~\ref{lem:compbipP4} these are the only bipartite graphs that are $P_4$-free. 
    Now suppose $\overline{G_{\overline{z}}}$ has an induced $K_{2,2}$. Then, by Lemma~\ref{lem:chunk_vi}, $G_z \wprod G_{\overline{z}}$ is only perfect in case (4).  
    We thus assume that $\overline{G_{\overline{z}}}$ is a disjoint union of stars. If $\overline{G_{\overline{z}}} \cong rK_2$, by Lemma~\ref{lem:chunk_v_i}, $G_z \wprod G_{\overline{z}}$ is only perfect in cases (9), (1), (5) and (10). Now assume that $\overline{G_{\overline{z}}}$ is a disjoint union of stars with an induced $P_3$. Since $G_z$ has an induced $P_3$, $\overline{G_z}$ is nonempty. We then consider three scenarios: \emph{i.}  $\alpha(\overline{G_z}) = 1$, \emph{ii.} $\alpha(\overline{G_z}) = 2$ and \emph{iii.} $\alpha(\overline{G_z}) \geq 3$. In scenario \emph{i.}, $G_z \cong K_n$, yet $G_z$ has an induced $P_3$ by assumption, contradicting Lemma~\ref{lem:p3free}. In scenario \emph{ii.}, $\omega(G_z) = 2$ and so $G_z$ is triangle-free. Lemma~\ref{lem:chunk_v_ii} gives us that $G_z \wprod G_{\overline{z}}$ is only perfect in cases (3), (5), (4) and (2). Finally, in scenario \emph{iii.}, $\omega(G_z) \geq 3$ so $G_z$ contains a triangle. From Lemma~\ref{lem:chunk_v_iii}, $G_z \wprod G_{\overline{z}}$ is only perfect in case (4).

    This completes the proof as we have enumerated all pairs of graphs.
    \end{proof}

\section{Discussion}

Theorem~\ref{thm:main} gives us a characterisation of all pairs of graphs whose weak modular product is perfect.
In light of the discussion of Section~\ref{sec:wmp_and_iso}, it is natural to ask if any of the cases (1)--(10) fall into classes of graphs for which there is no existing efficient graph isomorphism algorithm.
We analyse each case in turn. Cases (1)--(3) admit trivial algorithms to check isomorphism.
In case (4), there is a simple algorithm: find the connected components, in the case when the disjoint union of stars and cliques has two connected components, count the neighbours of each vertex and compare; otherwise the graphs are trivially non-isomorphic.
In case (5), one can use use the previous algorithm after taking complements, or, observe that the two graphs in question are cographs and so admit an efficient GI algorithm~\cite{Corneil1981}.
Cases (6) and (7) admit trivial algorithms by counting vertex neighbours.
Cases (8) and (9) are cographs so have an efficient algorithm.
Case (10) is trivial by counting connected components.
Thus, this technique does not lead to an algorithm for GI on any new graph families.

\section*{Acknowledgements}

Thanks to Simone Severini for providing the initial question directing this work, and to anonymous reviewers for commentary on a previous version of the manuscript.
This research was funded by the Engineering and Physical Sciences Research Council (EPSRC).
The software SageMath~\cite{sagemath} has been invaluable to this work, alongside the ISGCI graph database~\cite{graphclasses}.
\bibliographystyle{nicebib-alpha}
\bibliography{graphiso.bib,extrarefs.bib}
\end{document}